\newtheorem{lemma}{Lemma}
\newtheorem{theorem}{Theorem}
\newtheorem{remark}{Remark}
\newtheorem{corollary}{Corollary}
\newtheorem{definition}{Definition}
\newtheorem{proposition}{Proposition}
\newtheorem{hypothesis}{Hypothesis}
\newcommand{\cal}[1]{\mathcal{#1}}
\title{\vspace{-1 in} On Lattice Points, Short-Time Estimates, and Global Well-posedness of the Quintic NLS on $\mathbb{T}$}
\author{Ryan McConnell}
\thanks{The first author was partially supported by the NSF grant DMS-2154031.}
\subjclass{35Q55, 37L50, 42B37}
\begin{document}
\begin{abstract}
    We prove and utilize an improvement to the short time estimates of Burq, G\'erard, \& Tzvetkov \cite{BurqShortTime} on $\mathbb{T}$ via connecting this estimate to the number of lattice points in thin annuli. As a consequence, we enhance the global well-posedness level of the periodic quintic Nonlinear Schr\"odinger equation to $s > \frac{131}{624}\sim 0.21$, which is an improvement on the results of De Silva, Pavlovi\'c, Staffilani, \& Tzirakis, \cite{de2007global}, Li, Wu, \& Xu, \cite{li2011global}, and Schippa \cite{schippa2024improved}. We also present conditional results, dependent on improvements on the count of lattice points in thin annuli.
    \end{abstract}
\maketitle
\tableofcontents

\section{Introduction}
Consider the periodic mass critical Nonlinear Schr\"odinger equation (NLS) posed on $\mathbb{T}$
\begin{equation}\label{Equation: Quintic NLS}
    \begin{cases}
        (i\partial_t+\Delta)u = \pm |u|^4u\\
        u(x,0) = u_0(x)\in H^s(\mathbb{T}),
    \end{cases}
\end{equation}
where the equation is \textit{defocusing} when $+$ is taken, and \textit{focusing} when $-$ is taken. Equation \eqref{Equation: Quintic NLS} is mass critical in the sense that the $L^2_x(\mathbb{T})$ norm is scale invariant-- if $u$ solves \eqref{Equation: Quintic NLS}, then $u^{\lambda}(x,t) := \lambda^{-\frac12}u(\lambda^{-1} x, \lambda^{-2} t)$ solves \eqref{Equation: Quintic NLS} posed on $\lambda \mathbb{T}$ and
\[
\|u\|_{L^2_x(\mathbb{T})} = \|u^{\lambda}\|_{L^2_x(\lambda\mathbb{T})}.
\]
While this doesn't respect the geometry of $\mathbb{T}$ like it does for $\mathbb{R}$, this still serves to guide intuition. 

Well-posedness of \eqref{Equation: Quintic NLS} is immediate for $s > \frac12$, while Bourgain \cite{Bour1}, with his proof of the $L^6_{x,t}$ Strichartz estimate, proved local well-posedness for $s > 0$. Even though the question of local well-posdness is settled, global existence for the full range $0 < s < 1$ has been a lingering question for $30$ years.

With an eye towards global well-posedness, we observe that \eqref{Equation: Quintic NLS} satisfies conservation of Mass and Energy
\begin{equation*}
\|u(t)\|_{L^2_x} = \|u_0\|_{L^2_{x}},\qquad E(u)(t) := \frac{1}{2}\int_{\mathbb{T}}|\partial_x u(t)|^2\,dx\pm \frac{1}{6}\int_{\mathbb{T}}|u(t)|^{6}\,dx = E(u_0),
\end{equation*}
which naturally leads to global existence for the defocusing (as well as the small data focusing) case when $s \geq 1$, yet global well-posedness between these two conservation laws, for $0 < s < 1$, is a more subtle question. Progress first began with the introduction of Bourgain's High-Low decomposition \cite{bourgain1998refinements}, which requires the presence of a smoothing property for high-frequency data. This method was refined by Colliander, Keel, Staffilani, Takaoka, and Tao, who introduced the $I-$method in \cite{colliander2001global, MR1906069, colliander2003sharp}, and which was subsequently applied with varying success to the equation \eqref{Equation: Quintic NLS}, in \cite{de2007global, li2011global}. Specifically, \cite{de2007global} proved global well-posedness for $s > \frac49$, while \cite{li2011global} improved this result to $s > \frac25$ by utilizing a more refined modified energy (with the obvious caveat of small data for the focusing variant).

Now, the $I$-method is the inclusion of an operator $I$ defined by a smooth, decreasing, and radial multiplier:
\begin{equation}\label{Definition: m definition}
m(|n|) = 
\begin{cases}
    1 & |n|\leq N\\
    N^{1-s}|n|^{s-1} & |n|> 2N,
\end{cases}
\end{equation}
whose effect is to place $u_0\in H^s(\mathbb{T}^d)$ into $H^1(\mathbb{T})$, where we can attempt to use the energy functional above. The standard $I$-method argument then obtains quantitative bounds (in terms of $N$) on the energy increment associated to the flow of $Iu$, which is used to obtain global well-posedness. 

The arguments of {De Silva}, Pavlovi{\'c}, Staffilani, \& Tzirakis \cite{de2007global} as well as Li, Wu, \& Xu \cite{li2011global} supplement the already available factors of $N$ with those following from a refined bilinear estimate of the following form: if $f,g\in L^2(\lambda\mathbb{T})$, $M\gg L$, and $\lambda\gg 1$, then:
\begin{equation}\label{Equation: Base Introduction Bilinear}
\big\|e^{it\Delta_{\lambda\mathbb{T}}}P_Mfe^{it\Delta_{\lambda\mathbb{T}}}P_Lg\big\|_{L^2_{x,t}(\lambda\mathbb{T}\times [-1,1])}\lesssim \left(\frac{1}{\lambda}+\frac{1}{1+M}\right)^{1/2}\|P_Mf\|_{L^2_x(\lambda\mathbb{T})}\|P_Lf\|_{L^2_x(\lambda\mathbb{T})},
\end{equation}
where $\Delta_{\lambda\mathbb{T}}$ is the Laplace-Beltrami operator associated to the $\lambda-$torus. It's worth noting that this above result should be compared to the bilinear estimate on $\mathbb{R}$, where we view $\lambda\mathbb{T}\to\mathbb{R}$ as $\lambda\to\infty$.

However, due to the resonant structure of the quintic NLS, it appears that one cannot hope to obtain any improvements via the introduction of higher order modified energies as in \cite{colliander2003sharp}. Furthermore, as the the bilinear estimate above is sharp, one appears stuck without the introduction of a different idea, which was originally observed by Hani \cite{HaniClosedGWPNLS} in connection to the global well-posedness of the cubic NLS on closed manifolds. The key idea was that the low frequency portion of the scaled short-time estimate should hold for much longer times than $T\sim 1$; specifically, the short-time estimates of Burq, G\'{e}rard, \& Tzvetkov \cite{BurqShortTime} could be used in a scaling critical norm to establish longer times of existence that depend on $\lambda$, circumventing some of the blow up when $s\to 0$ in $\lambda \sim N^\frac{1-s}{s}$. The addition of this idea, together with the fine properties of the structure of $\mathbb{T}^d$, was observed by Herr \& Kwak\footnote{They technically did not need to rely on the $I-$method for their proof, as they obtained a short time Strichartz estimate with logarithmically dependent time.} \cite{herr2023strichartz} and subsequently by Schippa \cite{schippa2024improved}, to significantly improve the global well-posedness threshold.

The fundamental building block is the family of estimates given in \cite{HaniClosedGWPNLS}, which hold on timescales $\sim N^{-1}$:
\begin{equation}\label{Equation: Generic short time estimate}
\|e^{it\Delta_\mathcal{M}}P_{\leq N} f\|_{L^p_tL^q_x(N^{-1}\mathbb{T}\times\mathcal{M})}\lesssim \|P_{\leq N} f\|_{L^2(\mathcal{M})},
\end{equation}
whenever $(p,q)$ is an admissible exponent pair for the Schr\"odinger equation on $\mathbb{R}^d$, with $d$ the dimension of the (Riemannian) compact manifold $\mathcal{M}$. The key insight of \cite{herr2023strichartz}, then, is that the short-time estimates above hold on generic compact manifolds, so we should possibly expect such estimates to hold on longer timescales when restricted to $\mathbb{T}^d$, culminating in the estimate:
\begin{equation}\label{Equation: Herr Short time estimate}
\|e^{it\Delta_{\mathbb{T}^2}}P_{\leq N} f\|_{L^4_{t,x}(N^{-\alpha}\mathbb{T}\times \mathbb{T}^2)}\lesssim_\alpha \|P_{\leq N} f\|_{L^2_x(\mathbb{T}^2)},\quad\,\forall\alpha > 0.
\end{equation}
This estimate was utilized by \cite{herr2023strichartz} to obtain small data global well-posedness of the cubic NLS on $\mathbb{T}^2$ for $s > 0$ (and subsequently an analagous result for data in $L^2(\mathbb{T}^2)$ \cite{herr2025global}), while \eqref{Equation: Generic short time estimate} was used by \cite{schippa2024improved} to obtain global well-posedness for the quintic NLS on $\mathbb{T}$ for $s > \frac13$.

However, the threshold $s > \frac{1}{3}$ of \cite{schippa2024improved} is imposed by the duration in \eqref{Equation: Generic short time estimate}, and is the key avenue along which to attack this problem. This manuscript improves this estimate, which provides the following improvement to the global well-posedness threshold.
\begin{theorem}\label{Theorem: GWP theorem}
    Let $\alpha > \frac{131}{208}$ and $s > \frac{\alpha}{3}$. Then \eqref{Equation: Quintic NLS} is globally well-posed in $H^s(\mathbb{T})$ for any $\|u_0\|_{L^2_{x}(\mathbb{T})}\ll 1$.
\end{theorem}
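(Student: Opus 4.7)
My plan is to follow the $I$-method framework used by Schippa in \cite{schippa2024improved}, substituting the short-time Strichartz input at scale $N^{-1}$ with the improved estimate (the main technical contribution of the paper) that holds on the longer scale $N^{-\alpha}$. The small data hypothesis $\|u_0\|_{L^2}\ll 1$ handles the focusing/defocusing distinction uniformly, since after rescaling to a large torus $\lambda \mathbb{T}$ with $\lambda \sim N^{(1-s)/s}$ the sextic potential term is controlled by mass conservation and a Gagliardo--Nirenberg estimate, so that $E(Iu^\lambda)(0) \lesssim 1$ with $Iu^\lambda$ in $H^1(\lambda\mathbb{T})$.

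\textbf{Local theory and energy increment.} On a unit time interval $[-1,1]$ I would set up the contraction in an appropriate $U^p$/$V^p$- or $X^{s,b}$-type norm adapted to $Iu^\lambda$, using the bilinear estimate \eqref{Equation: Base Introduction Bilinear} to handle the nonlinear term. The core computation is the modified energy increment
\[
E(Iu^\lambda)(T)-E(Iu^\lambda)(0)=\int_0^T\!\!\int_{\lambda\mathbb{T}}\bigl(I(|u^\lambda|^4u^\lambda)-|Iu^\lambda|^4Iu^\lambda\bigr)\,\overline{\partial_x^2 Iu^\lambda}\,dx\,dt,
\]
a sextic commutator expression in frequency. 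Splitting the $Iu^\lambda$-factors into Littlewood--Paley pieces, the dominant contribution is when some frequencies lie above $N$, where $m$ is nontrivial; each such factor produces a gain of $N^{-\beta}$ (for appropriate $\beta$) times a product of short-time Strichartz norms.

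\textbf{Iteration.} The improved short-time estimate on scale $N^{-\alpha}$ lets me estimate the sextic integrand on a subinterval of length $N^{-\alpha}$ more efficiently than Schippa does at scale $N^{-1}$. Summing over $\sim N^\alpha$ subintervals to cover $[-1,1]$ yields an increment of the form $N^{-\delta(\alpha,s)}$ per unit time, and then iterating over the $\sim \lambda^2 \sim N^{2(1-s)/s}$ unit intervals needed to reach the original time $1$ on $\mathbb{T}$ requires $\delta(\alpha,s) > 2(1-s)/s$. Tracing through the bookkeeping, this closes precisely when $s > \alpha/3$. Reversing the scaling then converts the bound on $\|Iu^\lambda(t)\|_{H^1(\lambda \mathbb{T})}$ to a polynomial-in-time bound on $\|u(t)\|_{H^s(\mathbb{T})}$, completing the global existence argument.

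\textbf{Main obstacle.} The delicate step is the bookkeeping of the three competing powers of $N$: the rescaling factor $\lambda \sim N^{(1-s)/s}$, the number $N^\alpha$ of short-time subintervals required per unit interval, and the per-step gain coming from the multiplier $m$. A secondary subtlety is that, as noted in the introduction, the sextic resonant interactions preclude any gain from a higher-order correction to $E(Iu)$, so every power of $N^{-1}$ must be extracted from the short-time Strichartz estimate itself via a careful case analysis on the frequency configuration. Getting the exponents to close sharply at $s = \alpha/3$ is the heart of the argument.
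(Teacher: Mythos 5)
Your overall outline -- $I$-method on the rescaled torus $\lambda\mathbb{T}$, local theory on short intervals, energy increment accumulated over many pieces, closing when $s>\alpha/3$ -- is the right skeleton and matches the paper's strategy. But there is a genuine gap in the key multilinear input.

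\textbf{The bilinear estimate does not work on the relevant timescale.} You propose controlling the nonlinear interactions via the refined bilinear estimate \eqref{Equation: Base Introduction Bilinear}. The paper explicitly rules this out. After rescaling, the local estimates must be proved on $\mu\mathbb{T}\times\lambda\mathbb{T}$ where $\mu=\lambda^{2-\alpha}N^{-\alpha}$. Transferring the bilinear estimate from $\mathbb{T}\times\lambda\mathbb{T}$ to $\mu\mathbb{T}\times\lambda\mathbb{T}$ incurs a factor $\mu^{1/4}$ (the $L^4_{x,t}$ norm is not scale invariant), which converts the gain $(1/M+1/\lambda)^{1/2}$ into $(1/M+\mu/\lambda)^{1/2}$. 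This is no better than the $T\sim 1$ estimate unless $\alpha\geq 1$, and the whole point of the paper is to exploit $\alpha\approx 131/208<1$. The paper replaces this with a trilinear $L^2_{x,t}$ estimate (Proposition \ref{Proposition: Trilinear Free Estimate} and its $Y^0$-transferred version, Proposition \ref{Lemma: trilinear Estimates}), which is scale-invariant under the relevant scaling and therefore transfers to $\mu\mathbb{T}\times\lambda\mathbb{T}$ without the $\mu^{1/4}$ loss. Without this replacement your increment bound would not close, so the substitution you propose ``bilinear $\to$ bilinear on short time'' would fail at the exponent level.

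\textbf{A second modified energy is used.} You note that resonant interactions preclude gains from higher-order corrections, and so ``every power of $N^{-1}$ must be extracted from the short-time Strichartz estimate itself.'' This is overstated: the paper does construct a second-generation modified energy $E^2_I(u)=E^1_I(u)-\Lambda_6(\tilde\sigma_6)$, using a normal-form correction $\tilde\sigma_6=\tilde M_6/\Omega_6$ that removes the nonresonant sextic contributions. What the paper's introductory remark means is only that a \emph{third}-order correction (as in the KdV/cubic-NLS works) yields nothing further. The resonant six-linear term $\Lambda_6(\overline M_6)$ and the ten-linear commutator $\Lambda_{10}(M_{10})$ that remain are then estimated via a case analysis on the frequency configuration (Definition \ref{Definition: Resonant Set}, Lemmas \ref{Lemma: M6 estimate} and \ref{Lemma: spatial symbol size estimate}), with the trilinear estimate supplying the gain exactly where the bilinear one no longer can.

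\textbf{Minor bookkeeping.} Your interval count covers $[-1,1]$ by $N^\alpha$ pieces of length $N^{-\alpha}$; on $\lambda\mathbb{T}$ the correct interval length is $\mu=\lambda^{2-\alpha}N^{-\alpha}$, so the count over $[0,\lambda^2 T]$ is $\lambda^\alpha N^\alpha T\sim N^{\alpha/s+}T$. With increment $N^{-3+}$ per interval, closing gives $\alpha/s<3$, which is where $s>\alpha/3$ comes from. Your phrasing ``iterate $\lambda^2$ unit intervals, each split into $N^\alpha$ pieces'' produces a different exponent and does not transparently reproduce $s>\alpha/3$.
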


As previously mentioned, Theorem \ref{Theorem: GWP theorem} follows from an improvement to the short time estimate \eqref{Equation: Generic short time estimate} when posed on $\mathbb{T}$, which is derived from a connection between \eqref{Equation: Generic short time estimate} and lattice counting in thin annuli. However, due to scaling considerations discussed in Section \ref{Section: Basic Multilinear Estimates}, Theorem \ref{Theorem: GWP theorem} also requires the use of a trilinear estimate, Proposition \ref{Proposition: Trilinear Free Estimate}, over the bilinear estimate \eqref{Equation: Base Introduction Bilinear}. While this estimate has loss, the loss in many situations can be taken to be in the smallest frequency, and hence we can use this estimate in much the same way as we utilize the bilinear estimate. In fact, much of the construction of the modified energies and pointwise/regularity estimates, outside of a few tighter bounds, follows in exactly the same manner as in \cite{schippa2024improved}-- so we focus first on the proof the Strichartz estimate.

Before stating the Strichartz estimate, we first state Hypothesis \ref{Definition: Hypothsis H}, which will be used in the proof of the Strichartz estimate of Theorem \ref{Theorem: Short Time Strichartz}.

\begin{hypothesis}[Hypothesis $H(\alpha)$]
    Let $\alpha > 0$, $\beta\in \mathbb{R}^2$, and $\Lambda:= \mathbb{Z} + e^{i\frac{\pi}{3}}\mathbb{Z}$, the lattice spanned by $(1,0)$ and $(\frac{1}{2}, \frac{\sqrt{3}}{2}).$ We say that $\alpha$ satisfies \ref{Definition: Hypothsis H} if for all $N\in 2^\mathbb{N}$ we have
    \begin{equation}\tag{$H(\alpha)$}\label{Definition: Hypothsis H}
        \#\{n\in \Lambda\,:\,N^2\lesssim |n-\beta|^2\lesssim N^2+N^\alpha\}\lesssim N^\alpha.
    \end{equation}
    That is, the number of lattice points of $\Lambda$ in any annulus centered at $\beta$ with inner radius $N$ and outer radius $\sqrt{N^2+N^\alpha}$ is at most $N^\alpha$.    
\end{hypothesis}
\begin{remark}\label{Remark: H holds for larger}
    We observe that $\alpha > \frac23$ is clear by classical Fourier analytic techniques. Moreover, if (say) $\frac23 > \alpha' > \alpha$ and $H(\alpha)$ holds, then a decomposition argument also yields $H(\alpha')$.
\end{remark}

The above hypothesis is intimately related to the (slightly generalized) circle problem of Gauss, which is to find the best $\theta = \theta(\beta) > \frac12$ such that
\[
\#\{n\in\mathbb{Z}^2\,:\,|n-\beta|^2\leq N^2\} = \pi N^2 + O(N^\theta).
\]
Indeed, many of the methods used to prove bounds on the above are insensitive to whether or not the object is a circle or ellipse. Results of Huxley \cite{MR1044308, MR1199067, MR2005876} lead up to and obtain $\theta = \frac{131}{208} = 0.62980...$, and recently Li \& Yang \cite{li2023improvement} obtained $\theta = 0.62896...$ at least in the case that $\beta = 0$ and $\Lambda = \mathbb{Z}^2$. However, even proving that the above (for the standard lattice) holds for any $\alpha < \theta(0)$ is nontrivial. Moreover, there is a wealth of work on this kind of problem from a probabilistic perspective, see e.g. \cite{MR2039790, MR2226267, MR2253596, MR4302203, MR1246066, MR1257819, MR1224087}, viewing either the center, $\beta$, or the radius as random variables.

With \ref{Definition: Hypothsis H}, we are in a position to state our improvement to \eqref{Equation: Generic short time estimate}:
\begin{theorem}\label{Theorem: Short Time Strichartz}
    Let $\alpha > 0$ satisfy \ref{Definition: Hypothsis H}. Then for any $N\in 2^\mathbb{N}$ and $f\in L^2(\mathbb{T})$:
    \begin{equation}\label{Equation: Short time L6 thm eqn}
\|P_{\leq N} e^{it\Delta}f\|_{L^6_{x,t}(\mathbb{T}\times[0,N^{-\alpha}])}\lesssim \|P_{\leq N}f\|_{L^2(\mathbb{T})}.
    \end{equation}
    In particular, Equation \ref{Equation: Short time L6 thm eqn} holds unconditionally for $\alpha > \frac{131}{208}$.
\end{theorem}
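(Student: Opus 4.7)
The plan is to reduce the estimate to a hexagonal-lattice thin-annulus count of the shape appearing in \eqref{Definition: Hypothsis H}, and then sum dyadically. Write $u = e^{it\Delta} P_{\leq N} f$, set $T = N^{-\alpha}$, and start from $\|u\|_{L^6(\mathbb{T}\times[0,T])}^6 = \|u^3\|_{L^2_{x,t}}^2$. Introduce a compactly supported smooth temporal cutoff $\chi$ with $\chi^6 \geq \mathbf{1}_{[0,1]}$, and expand
\[
u^3(x,t) = \sum_{\vec n \in [-N, N]^3} \Bigl(\prod_i \hat f(n_i)\Bigr) e^{i x (n_1+n_2+n_3)}\, e^{-it\phi(\vec n)}, \qquad \phi(\vec n) := n_1^2 + n_2^2 + n_3^2.
\]
Plancherel in $x$ and integration against $\chi^6(t/T)$ in $t$, combined with Schur's test on the sequence $c_{\vec n} := \prod_i \hat f(n_i)$ (for which $\|c\|_{\ell^2}^2 = \|f\|_{L^2(\mathbb{T})}^6$), reduce the theorem to the kernel bound
\begin{equation*}
T \sup_{\vec n}\, \sum_{\vec n' :\, \sum_i n_i' \,=\, \sum_i n_i} \bigl|\widehat{\chi^6}\bigl(T(\phi(\vec n) - \phi(\vec n'))\bigr)\bigr| \lesssim 1.
\end{equation*}

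For the inner sum, fix $n = \sum_i n_i$ and parametrize $\vec n' = (a, b, n - a - b)$ with $(a,b) \in \mathbb{Z}^2$. Completing the square delivers the key algebraic identity
\[
\phi(\vec n') - \tfrac{n^2}{3} = 2\bigl|(a - \tfrac{n}{3}) + e^{i\pi/3}(b - \tfrac{n}{3})\bigr|^2 = 2|z - \beta_n|^2,
\]
where $z := a + b\, e^{i\pi/3} \in \Lambda$ and $\beta_n := \tfrac{n}{3}(1 + e^{i\pi/3}) \in \mathbb{R}^2$. Thus the restriction of $\phi$ to the plane $n_1 + n_2 + n_3 = n$ is, up to an additive constant, twice the squared Euclidean distance on $\Lambda$ from the fixed center $\beta_n$, and the constraint $|\phi(\vec n) - \phi(\vec n')| \leq R$ becomes the annulus condition $|z - \beta_n|^2 \in [M^2, M^2 + R/2]$, with $M \lesssim N$ determined by $\vec n$.

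To close the Schur bound, decompose the $\vec n'$-sum into dyadic shells $|\phi(\vec n) - \phi(\vec n')| \sim 2^k N^\alpha$ for $k \geq 0$. The Schwartz decay of $\widehat{\chi^6}$ supplies an $O_L(2^{-Lk})$ factor per shell, for any $L$. Each shell is a count of $\Lambda$-points in an annulus about $\beta_n$ of squared radial thickness $2^k N^\alpha$, which by \eqref{Definition: Hypothsis H} at the effective exponent $\log_M(2^k N^\alpha)$ is bounded by $\lesssim 2^k N^\alpha$: when this exponent equals $\alpha$ one invokes the hypothesis itself, when it lies in $(\alpha, 2/3)$ one invokes Remark \ref{Remark: H holds for larger}, and when it exceeds $2/3$ or the "annulus" already covers the full disk one uses classical bounds. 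Hence each shell contributes $T \cdot 2^{-Lk} \cdot 2^k N^\alpha = 2^{(1-L)k}$ to the Schur sum, and the total is $O(1)$. The principal technical obstacle is the uniformity of the lattice-count bound over the generically irrational shift $\beta_n$, which is precisely why \eqref{Definition: Hypothsis H} is formulated for arbitrary $\beta \in \mathbb{R}^2$; the use of a smooth rather than sharp temporal cutoff is what removes the $\log N$ loss that would otherwise appear in the dyadic sum, thereby accounting for the improvement over Bourgain's $N^\epsilon$-loss $L^6$ estimate on the full time interval $[0,1]$.
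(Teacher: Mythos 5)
Your proof is correct, and it reaches the same hexagonal-lattice thin-annulus count as the paper, but by a genuinely different route. The paper averages the $L^6$ integral over the endpoint $T\in[0,2N^{-\alpha}]$, expands in the kernel $h(\tau)$, treats the resonant $h(0)$ term separately via Bourgain's $L^6_{x,t}(\mathbb{T}\times\mathbb{T})$ estimate with $N^{0+}$ loss, and reduces the remainder to $\sup_{K\geq N^\alpha}\frac{1}{K}\sum_{|\tau|\sim K}h(\tau)$, which is then controlled by two Cauchy--Schwarz passes across the cylindrical shells $\mathcal{C}_\ell^n$ together with the annular lattice count (Lemmas~\ref{Lemma: Dyadic component bound}--\ref{Lemma: Lattice count reduction} and Corollary~\ref{Corollary: Annular Lattice Point Count}). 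You instead introduce a smooth temporal cutoff $\chi$ with $\chi^6\geq\mathbf{1}_{[0,1]}$, apply Plancherel in $x$, and run Schur's test on the slices $\{n_1+n_2+n_3=n\}$; the rapid decay of $\widehat{\chi^6}$ supplies the summable weights on the dyadic shells, eliminating any need for a separate $h(0)$ analysis, and your algebraic identity $\phi(\vec n')-\tfrac{n^2}{3}=2|z-\beta_n|^2$ with $z=a+be^{i\pi/3}$ is a compact closed form of the rotation/translation argument in the paper's Lemma~\ref{Lemma: Lattice count reduction}. Both arguments invoke Hypothesis~$H(\alpha)$ (and its dyadic-thickening consequence, Corollary~\ref{Corollary: Annular Lattice Point Count}) in exactly the same role, since the geometric content is identical: the restriction of $n_1^2+n_2^2+n_3^2$ to a plane $n_1+n_2+n_3=n$ is an affinely shifted quadratic distance form on the hexagonal lattice $\Lambda$. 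Your Schur/smooth-cutoff setup is arguably cleaner, trading the paper's $T$-averaging device and separate resonant-term treatment for the standard kernel-summability reduction; the paper's formulation via $h(\tau)$ has the advantage of directly exposing the $h(0)$ obstruction used in its sharpness remark.
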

With \ref{Definition: Hypothsis H} we can similarly restate Theorem \ref{Theorem: GWP theorem} to the form that we will actually prove.
\begin{proposition}
    Let $\alpha > 0$ satisfy \ref{Definition: Hypothsis H} and $s > \frac\alpha3$. Then \eqref{Equation: Quintic NLS} is globally well-posed in $H^s(\mathbb{T})$ for any $\|u_0\|_{L^2_{x}(\mathbb{T})}\ll 1$. Moreover, the same result holds for any $\alpha$ such that \eqref{Equation: Short time L6 thm eqn} holds.
\end{proposition}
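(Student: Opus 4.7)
The strategy follows the Hani-style rescaled $I$-method implemented by \cite{schippa2024improved}, with Theorem \ref{Theorem: Short Time Strichartz} and Proposition \ref{Proposition: Trilinear Free Estimate} replacing the generic short-time estimate \eqref{Equation: Generic short time estimate} and the bilinear estimate \eqref{Equation: Base Introduction Bilinear} that drove previous approaches. First I would apply the scaling $u^\lambda(x,t) = \lambda^{-1/2} u(\lambda^{-1}x, \lambda^{-2}t)$, choosing $\lambda \sim N^{(1-s)/s}$ so that the small-mass hypothesis yields $E(Iu_0^\lambda) \lesssim 1$ on $\lambda\mathbb{T}$. This reduces global well-posedness on $[0,T]$ for \eqref{Equation: Quintic NLS} to uniform control of $\|Iu^\lambda(t)\|_{H^1(\lambda\mathbb{T})}$ over the stretched time interval $[0,\lambda^2 T]$.

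The heart of the argument is the construction of a modified energy $\tilde E(Iu^\lambda)$ along the lines of \cite{li2011global, schippa2024improved}, whose time derivative cancels the leading resonant part of $\frac{d}{dt}E(Iu^\lambda)$, together with a local increment estimate
\[
\bigl|\tilde E(Iu^\lambda)(t_2) - \tilde E(Iu^\lambda)(t_1)\bigr|\lesssim N^{-\gamma}
\]
on each window $[t_1,t_2]$ of length comparable to the rescaled short-time duration $\tau \sim \lambda^{2-\alpha}N^{-\alpha}$ that Theorem \ref{Theorem: Short Time Strichartz} provides on $\lambda\mathbb{T}$. Here the new inputs enter: the $L^6_{t,x}$ Strichartz estimate controls the linear propagator on time windows of length $\tau$, while Proposition \ref{Proposition: Trilinear Free Estimate} handles the five-linear interactions for which no bilinear pairing of the two largest factors closes after scaling. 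Because the trilinear estimate carries a derivative loss, one must arrange in each frequency configuration that this loss lands on the lowest-frequency input, where it is absorbed by the $I$-multiplier; this allocation is the essential reason the old bilinear estimate is insufficient once $\tau$ is stretched beyond the baseline duration $\lambda^2 N^{-1}$.

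Finally I would iterate the local increment bound over the $K := \lambda^2 T/\tau \sim N^{\alpha/s}T$ short-time windows partitioning $[0,\lambda^2 T]$. Closing the bootstrap requires $K \cdot N^{-\gamma} \to 0$ as $N\to\infty$, and tracking $\gamma$ through the modified-energy accounting pinpoints the threshold at $s > \alpha/3$. The main obstacle I anticipate is the bookkeeping of the multilinear correction terms: one must verify in every frequency configuration that the trilinear derivative loss really does land on a low-frequency input, which mirrors the case analysis of \cite{schippa2024improved} but now with the duration $N^{-\alpha}$ playing the role formerly played by $N^{-1}$, so the interplay between $\alpha$, $s$, and the various dyadic scales must be recomputed throughout. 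The ``moreover'' clause follows at no extra cost, because only the conclusion of Theorem \ref{Theorem: Short Time Strichartz}, not Hypothesis \ref{Definition: Hypothsis H} itself, is used in the multilinear analysis.
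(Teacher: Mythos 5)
Your sketch is correct and follows the same route as the paper: rescale by $\lambda\sim N^{(1-s)/s}$, prove a modified-energy increment bound of size $N^{-3+}$ on each window of length $\lambda^{2-\alpha}N^{-\alpha}$ (this is Proposition \ref{Proposition: I method bounds}), and iterate over $\sim N^{\alpha/s}T$ such windows, which closes precisely when $s>\alpha/3$. The paper packages the iteration as a continuity argument on the set $A_T$ of times at which $\|Iu^\lambda(t)\|_{H^1(\lambda\mathbb{T})}$ stays below a fixed threshold, but that is exactly the bootstrap you describe, and you correctly note that the ``moreover'' clause requires only the conclusion of Theorem \ref{Theorem: Short Time Strichartz} rather than Hypothesis $H(\alpha)$ itself.
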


The outline of the paper will be as follows. In Section \ref{Section: Notation} we review common notation that will be used throughout the manuscript. In Section \ref{Section: Proof of Theorem 2} we prove Theorem \ref{Theorem: Short Time Strichartz} under the assumption of \eqref{Definition: Hypothsis H}. In Section \ref{Section: Proof of prop 1} we fix an $\alpha$ such that Theorem \ref{Theorem: Short Time Strichartz} holds, and use this to prove Theorem \ref{Theorem: GWP theorem}, contingent on the $I$-method results of Section \ref{Section: I-Method Setup & Function Spaces}. We conclude the manuscripts by proving the claimed bounds of Proposition \ref{Proposition: I method bounds}: in Section \ref{Section: Atomic Space Overview} we overview the atomic spaces and construct the space $Y$; in Section \ref{Section: Basic Multilinear Estimates} we prove a trilinear Strichartz estimate on scaled torii which will serve as a replacement for the bilinear estimate of \cite{de2007global, li2011global}; lastly, Section \ref{Section: I-method Setup} contains the construction of the modified energy, with Section \ref{Section: Pointwise Symbol Estimates} containing size and regularity estimates for the terms in the modified energy, culminating in the estimation of the multilinear operators of Proposition \ref{Proposition: I method bounds} in Section \ref{Section: I-method multilinear estimates}.

\section{Notation}\label{Section: Notation}
As will be come apparent in the later sections, we will need to work on a scaled torus $\lambda\mathbb{T}$, for some $\lambda > 0$. For any function $v\in C^0_tL^2_x([0,T]\times \mathbb{T})$ we denote
\[
v^{\lambda}(x,t) = \frac{1}{\lambda^{1/2}}v\left(\frac{x}{\lambda}, \frac{t}{\lambda^2}\right),
\]
where $v^\lambda$ naturally lives in $C^0_tL^2_x([0,\lambda^2 T]\times \lambda \mathbb{T})$. We note that this respects the natural scaling of equation \eqref{Equation: Quintic NLS}.

In the scaled space $\lambda\mathbb{T}$, we denote the Fourier transform of $f$ by $\widehat{f}$, and is given by
\[
\widehat{f}(k) = \int_{\lambda\mathbb{T}}e^{-ikx}f(x)\,dx.
\]
Letting $(dk)_\lambda$ denote the normalized counting measure, we naturally have the Fourier inversion formula given by
\[
f(x) = \int e^{ikx}\widehat{f}(k)\,(dk)_\lambda = \frac{1}{\lambda}\sum_{k\in\frac{1}{\lambda}\mathbb{Z}}e^{ikx}\widehat{f}(k).
\]
If $I\subset \tfrac{1}{\lambda}\mathbb{Z}\subset \mathbb{R}$, we denote the projection of $\widehat{f}$ onto the set $I$ by $P_I f$, where
\[
P_If := \int_{I} e^{ikx}\widehat{f}(k)(dk)_\lambda = \frac{1}{\lambda}\sum_{k\in I}\widehat{f}(k)e^{ikx}.
\]
With these we find the standard definition of the scaled Sobolev spaces $H^s(\lambda\mathbb{T})$ as
\[
\|f\|^2_{H^s(\lambda\mathbb{T})} = \frac{1}{\lambda}\sum_{k\in\frac{1}{\lambda}\mathbb{Z}}\langle k\rangle^{2s}|\widehat{f}(k)|^2.
\]

We will also require the ability to go between multiple scales. Suppose that $\mu, \lambda, \nu > 0$, $v\in C^0_tL^2_x(\mu\mathbb{T}\times\lambda\mathbb{T})$, and $Q$ is some Fourier multiplier on $L^2_x(\lambda\mathbb{T})$ given by $\widehat{Q}(k) = q(k)$. Defining $Q^\nu$ to be the Fourier multiplier on $L^2(\nu\lambda\mathbb{T})$ with $\widehat{Q^\nu}(k) = q(\nu k)$, then
\begin{equation}\label{Equation: Scaling Multiplier}
\|Qf\|_{L^{p_2}_tL^{p_1}_x(\mu\mathbb{T}\times \lambda\mathbb{T})} = \nu^{-\sigma(p_1,p_2)}\|Q^{\nu}f^{\nu}\|_{L^{p_2}_tL^{p_1}_x(\nu^2\mu\mathbb{T}\times \nu\lambda\mathbb{T})}, \quad \sigma(p_1, p_2) = \frac{2}{p_2}+\frac{1}{p_1} - \frac12.
\end{equation}
The above will often be applied in the situation that $Q = P_I$ for some interval $I\subset \tfrac{1}{\lambda}\mathbb{Z}$, where setting $\nu = \lambda^{-1}$ yields
\[
\|P_If\|_{L^{p_2}_tL^{p_1}_x(\mu\mathbb{T}\times \lambda\mathbb{T})} = \lambda^{\sigma(p_1,p_2)}\|P_{\lambda I}f^{\lambda^{-1}}\|_{L^{p_2}_tL^{p_1}_x(\tfrac{\mu}{\lambda^2}\mathbb{T}\times\mathbb{T})},
\]
while setting $\nu = \mu^{-1/2}$ yields
\[
    \|P_If\|_{L^{p_2}_tL^{p_1}_x(\mu\mathbb{T}\times \lambda\mathbb{T})} = \nu^{\frac{\sigma(p_1,p_2)}{2}}\|P_{\sqrt{\mu} I}f^{\mu^{-1/2}}\|_{L^{p_2}_tL^{p_1}_x(\mathbb{T}\times\tfrac{\lambda}{\sqrt{\mu}}\mathbb{T})}.
\]
In our setting $\mu \ll \lambda^2$, so this will allow us to transfer problems on $\lambda\mathbb{T}$ to those on $\mathbb{T}$ for small time, and problems on $\mu\mathbb{T}\times\lambda\mathbb{T}$ to problems with unit time on very large torii. Additionally, we will always take $p_1=p_2\in\{2, 6\}$-- when $p_1 = p_2 = 6$, we see that lossless $L^6_{x,t}$ Strichartz for $T \sim \mu/\lambda^2$ transfer without loss to $L^6_{x,t}$ Strichartz estimates on $\mu\mathbb{T}\times \lambda\mathbb{T}$.

We will perform Littlewood-Paley decompositions, where it becomes necessary to consider the dyadic numbers $M\in 2^{\mathbb{N}_0}$, and define
\[
A_{N_i} = \{x\in\mathbb{R}\,:\,\frac{1}{2}M<|x| < 2M\}\qquad\mbox{for }M\in 2^\mathbb{N},
\]
together with $A_1 = [-2,2]$. We may then define the relationship $|k|\sim M$ in the standard way, with the understanding that $|k|\sim 2$ for $|k|\leq 2$. In this fashion, we may consider the smooth (inhomogenous) Littlewood-Paley projections $P_M$ for $M\in 2^\mathbb{N}$. For functions $f\in L^2$ we write $f_{M} := P_M f$, which affords us the standard decomposition:
\[
\|f\|_{L^2_x(\lambda\mathbb{T})}^2\sim \sum_{M\in2^\mathbb{M}}\|f_M\|_{L^2_x(\lambda\mathbb{T})}^2.
\]

For $n\in\mathbb{N}$ with $n > 1$ we also define both the hyperplanes $\Gamma_n$ and their inherited measures $(d\Gamma_n)_\lambda$ by
\[
\Gamma_n = \left\{(k_1,\dots, k_n)\in\left(\tfrac{1}{\lambda}\mathbb{Z}\right)^j\,:\,k_1+\dots+k_n = 0\right\}, \quad (d\Gamma_n)_\lambda = \prod_{j=1}^{n-1}(dk_j)_\lambda,
\]
which will be useful not only in our construction of the modified energy, but for handling multilinear estimates. Indeed, if $n > 1$ and $(f_j)_{j=1}^n$ is some collection with $f_j\in L^2(\lambda\mathbb{T})$, then we find the standard identity
\[
\int_{\lambda\mathbb{T}}\prod_{j=1}^n f_j(x)\,dx = \frac{1}{\lambda^{n-1}}\sum_{\substack{k_j\in\tfrac{1}{\lambda}\mathbb{Z}\\ \sum_{j=1}^n k_j = 0}}\prod_{j=1}^n\widehat{f}_j(k_j) = \int \prod_{j=1}^n\widehat{f}_j(k_j)\,(d\Gamma_n)_\lambda.
\]

For $n\in\mathbb{N}$ and a collection $(z_j)_{j=1}^n$, we denote $(z_j^*)_{j=1}^n$ to be a decreasing rearrangement of $(z_j)_{j=1}^n$-- that is, 
\[
|z_1^*|\geq |z_2^*|\geq\dots\geq |z_n^*|,
\]
and $(z_j^*)$ is a permutation of $(z_j)$. Lastly, for $A,B > 0$, we denote $A\vee B = \max(A, B)$ and $A\wedge B = \min(A,B).$

\section{A Lattice Counting Lemma \& Proof of Theorem \ref{Theorem: Short Time Strichartz}}\label{Section: Proof of Theorem 2}
This section is dedicated to the proof of Theorem \ref{Theorem: Short Time Strichartz}. For this section, we let $N\in 2^\mathbb{N}$ be distinct from the $N$ used in the $I-$method. 

We first perform a reduction as in \cite{herr2023strichartz}, where for fixed $n_i,m_i\in\mathbb{Z}$ with $1\leq i\leq 3$ we define 
\[
\Omega = \left|\sum_{i=1}^3n_i^2-m_i^2\right|.
\]
Additionally, for a fixed $f\in L^2_x$ and $\tau\in\mathbb{Z}$, we write
\[
h(\tau) = h(\tau; N) = \sum_{\substack{\sum_{i=1}^3n_i-m_i=0\\\Omega = \tau,\,|n_i|,|m_i|\leq 2N}}\left|\prod_{i=1}^3\widehat{f}(n_1)\widehat{f}(m_i)\right|.
\]
With these notations, we find
\begin{align*}
    \|e^{it\Delta}P_{\leq N}f\|^6_{L^6_{t,x}([0,N^{-\alpha}]\times\mathbb{T})} & = \int_0^{N^{-\alpha}}\int_\mathbb{T}|e^{it\Delta}P_{\leq N}f|^6\,dxdt\\
    &\lesssim N^{\alpha}\int_0^{2N^{-\alpha}}\int_0^{T}\int_\mathbb{T}|e^{it\Delta}P_{\leq N}f|^6\,dxdtdT\\
    &\lesssim N^{-\alpha}h(0) + \sum_{\tau > 0}\min\left(N^{-\alpha}, \frac{N^\alpha}{\tau^2}\right)h(\tau)\\
    &\lesssim N^{-\alpha}h(0) + \sup_{K\in N^\alpha2^\mathbb{N}}\frac{1}{K}\sum_{K\leq |\tau|\leq 2K}h(\tau).
\end{align*}
The first term is handled exactly the same way that one handles the $L^6_{x,t}(\mathbb{T}\times\mathbb{T})$ estimate. Indeed, $h(0)$ can be estimated exactly as in \cite[Proposition 2.36]{Bour1} yielding a loss of $N^{0+}$, and hence the first contribution satisfies
\[
N^{-\alpha}h(0)\lesssim N^{-\alpha+}\|f\|_{L^2_x(\mathbb{T})}^6\lesssim \|f\|_{L^2_x(\mathbb{T})}^6,\qquad \alpha > 0.
\]
Thus, for $\alpha > 0$ it is sufficient to instead estimate
\[
\sup_{K\in N^\alpha2^\mathbb{N}}\frac{1}{K}\sum_{K\leq |\tau|\leq 2K}h(\tau) = \sup_{K\in N^\alpha2^\mathbb{N}}\frac{1}{K}\sum_{\substack{\sum_{i=1}^3n_i-m_i=0\\K\leq |\sum_{i=1}^3n_i^2-m_i^2|\leq 2K\\|n_i|,|m_i|\leq 2N}}\left|\prod_{i=1}^3\widehat{f}(n_1)\widehat{f}(m_i)\right|.
\]
\begin{remark}
    It is known, \cite[Remark 2]{Bour1}, that some loss in estimating $h(0)$ is necessary. This forces the best possible Strichartz estimate to be of the form
    \[
        \|P_{\leq N} e^{it\Delta_\mathbb{T}}f\|_{L^6_{x,t}(\mathbb{T}\times [0,T])}\lesssim \|P_{\leq N}f\|_{L^2_{x}(\mathbb{T})},\qquad T\lesssim \log(1+N)^{-6},
    \]
    which more closely matches the $L^4_{x,t}$ estimate of \cite{herr2023strichartz} on $\mathbb{T}^2$.
\end{remark}

\begin{lemma}\label{Lemma: Dyadic component bound}
    Let $\alpha$ be such that \ref{Definition: Hypothsis H} holds, $K\in N^\alpha2^\mathbb{N}$, and $f\in L^2_x(\mathbb{T})$. Then 
    \[
\frac{1}{K}\sum_{\substack{\sum_{i=1}^3 n_i-m_i = 0\\ K\leq \left|\sum_{i=1}^3 n_1^2-m_i^2\right|\leq 2K\\ |n_i|,|m_i|\leq 2N}}|\prod_{i=1}^3\widehat{f}(n_1)\widehat{f}(m_i)| \lesssim \|P_{\leq N}f\|_{L^2_x(\mathbb{T})}^6.
    \]
\end{lemma}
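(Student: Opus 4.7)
The strategy is to reduce the arithmetic sum to a thin-annulus lattice-point count on a translated copy of the hexagonal lattice $\Lambda$, and then invoke Hypothesis~\ref{Definition: Hypothsis H}.

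\textbf{Step 1 (Schur's test).} Set $F(\vec n)=\prod_{i=1}^3|\widehat f(n_i)|$ and $G(\vec m)=\prod_{i=1}^3|\widehat f(m_i)|$, and let $A_K\subset \mathbb Z^3\times\mathbb Z^3$ be the set of pairs $(\vec n,\vec m)$ satisfying the three summation constraints, so the left-hand side is $K^{-1}\sum_{A_K}FG$. The set $A_K$ is symmetric under $\vec n\leftrightarrow\vec m$ (the quadratic constraint sees only $|\Omega|$), so Schur's test yields $\sum_{A_K}FG\le C\,\|F\|_{\ell^2}\|G\|_{\ell^2}$ with
\[
C:=\sup_{\vec n:\,|n_i|\le 2N}\#\bigl\{\vec m\in\mathbb Z^3:\,|m_i|\le 2N,\ \textstyle\sum m_i=\sum n_i,\ |\sum m_i^2-\sum n_i^2|\in[K,2K]\bigr\}.
\]
Plancherel gives $\|F\|_{\ell^2}^2\lesssim\|P_{\le N}f\|_{L^2}^6$ (and likewise for $G$), so it suffices to prove $C\lesssim K$.

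\textbf{Step 2 (projection to the hexagonal lattice).} Fix $\vec n$ and set $s=\sum n_i$, $R^2=\sum n_i^2$. The translation $\tilde{\vec m}:=\vec m-(s/3)(1,1,1)$ maps the affine slice $\{\vec x\in\mathbb Z^3:\sum x_i=s\}$ onto a coset $\beta(s)+\Lambda_0$ of the root lattice $\Lambda_0=\{\vec x\in\mathbb Z^3:\sum x_i=0\}$ inside the plane $H=(1,1,1)^\perp\subset\mathbb R^3$. Since $|\tilde{\vec m}|^2=|\vec m|^2-s^2/3$, the quadratic constraint becomes $|\tilde{\vec m}|^2\in I_+\cup I_-$ for two intervals $I_\pm$ of length $K$. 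After a linear isometry $H\cong\mathbb R^2$ identifying $\Lambda_0$ with a universal rescaling of $\Lambda=\mathbb Z+e^{i\pi/3}\mathbb Z$, the count $C$ is therefore controlled by
\[
\sup_{\gamma\in\mathbb R^2,\,0\le a\lesssim N^2}\#\{\ell\in\Lambda:|\ell-\gamma|^2\in[a,a+K]\},
\]
which we need to bound by $\lesssim K$.

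\textbf{Step 3 (annular counting and main obstacle).} When $a\le K$, the annulus is contained in a disk of squared-radius $\le 2K$, hence holds $\lesssim K$ lattice points by a trivial volume bound. When $a>K$ we have $a^{\alpha/2}\le N^\alpha\le K$, so we partition $[a,a+K]$ into $\lceil K/a^{\alpha/2}\rceil$ sub-intervals of length $\le a^{\alpha/2}$; each corresponding sub-annulus sits at radius $\sim\sqrt a$ with squared-width $\lesssim a^{\alpha/2}$, and Hypothesis~\ref{Definition: Hypothsis H} applied at the dyadic scale closest to $\sqrt a$ supplies $\lesssim a^{\alpha/2}$ lattice points per piece, totalling $\lesssim K$. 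The conceptual heart of the argument is Step 2 — recognizing that the linear constraint $\sum m_i=s$ forces $\vec m$ onto a coset of the $A_2$ lattice so that the quadratic constraint translates to a thin-annulus count on the hexagonal lattice appearing in Hypothesis~\ref{Definition: Hypothsis H}. Step 1 is a standard Schur test, and the dyadic decomposition in Step 3 is tailored precisely to the range $K\ge N^\alpha$ afforded by the hypothesis; without this lower bound on $K$ the decomposition would overshoot and lose a factor.
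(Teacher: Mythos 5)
Your proof is correct, and it shares the same geometric core as the paper's argument: the linear constraint $\sum m_i=s$ confines $\vec m$ to a coset of the $A_2$ root lattice inside the plane $(1,1,1)^\perp$, so the quadratic constraint becomes a thin-annulus count on a translate of the hexagonal lattice $\Lambda$ (this is exactly the content of the paper's Lemma~\ref{Lemma: Lattice count reduction}). What differs is the functional-analytic framing. You apply Schur's test to the bilinear form $\sum_{A_K}F(\vec n)G(\vec m)$, reducing everything to the uniform fiber bound $C=\sup_{\vec n}\#\{\vec m:\cdots\}\lesssim K$, which you then establish via the dyadic decomposition in Step~3. The paper instead decomposes $\mathbb R^3$ into cylindrical annuli $\mathcal C_\ell$ of thickness $K$ in $r^2$, observes that $|\,|m|^2-|\tilde m|^2|\le 2K$ restricts $m,\tilde m$ to neighboring annuli, and runs two successive Cauchy--Schwarz inequalities (one inside each annulus using $\#(\mathbb Z^3\cap\mathcal C_\ell^n)\lesssim K$, one over the annulus index). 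Both routes invoke precisely the same lattice estimate; your explicit splitting of $[a,a+K]$ into $\sim K/a^{\alpha/2}$ sub-intervals is the ``standard differencing'' that the paper compresses into Remark~\ref{Remark: H holds for larger} and Corollary~\ref{Corollary: Annular Lattice Point Count}. The Schur-test framing is arguably a bit cleaner, as it isolates the lattice count as a single scalar rather than threading it through a double annular decomposition. One small point of care in Step~3: a sub-interval's left endpoint can be as large as $\sim 2a$, so the dyadic $N$ for Hypothesis~$H(\alpha)$ should be chosen with $N^2\sim a+ja^{\alpha/2}$ rather than fixed at $N^2\sim a$ throughout; since this changes $N$ by at most one dyadic step, $N^\alpha\sim a^{\alpha/2}$ still holds and the conclusion is unaffected, but the phrasing as written slightly overstates the precision of the hypothesis.
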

Due to the form of the estimate, it becomes necessary to calculate the number of lattice points on a plane intersected with a very thin cylindrical annulus in the direction of the normal to the plane. Before getting to this, we must first introduce notation. For $n\in\mathbb{Z}$ and $K\in N^\alpha2^\mathbb{N}$ fixed, let 
\[
P_n = \{x\in\mathbb{R}^3 : x\cdot(1,1,1) = n\},
\]
which is a plane with normal vector $(1,1,1)$. For $\ell\in\mathbb{N}$, $\xi_1 = (1,0,-1)$, and $\xi_2 = \tfrac{1}{\sqrt{3}}(1,-2,1)$, we decompose $[0,6N^2] = \bigsqcup_{\ell\geq 0} I_\ell$ where $I_\ell = [\ell K, (\ell+1)K)$, $0\leq \ell\lesssim N^2/K$, and define 
\begin{multline*}
\mathcal{C}_\ell =  \{x\in \mathbb{R}^3\,:\,x = t(1,1,1),\,t\in\mathbb{R}\}\\
+ \{x\in\mathbb{R}^3\,:\,x = \xi_1 r\cos(\theta) + \xi_2 r\sin(\theta),\,\theta\in[0,2\pi],\,r^2\in I_\ell\},
\end{multline*}
which is the cylindrical annulus in the direction of the normal vector for the plane $P_n$, with inner radius $\sqrt{\ell K}$ and outer radius $\sqrt{(\ell+1)K}$. Observe that 
\[
\mathbb{Z}^3\cap P_n\cap\bigsqcup_{\ell\geq 0}\mathcal{C}_\ell = \{(n_1,n_2,n_3)\in\mathbb{Z}^3\,:\,\,n_1+n_2+n_3=n\}.
\]

\begin{lemma}\label{Lemma: Lattice count reduction}
There exists $c_n$, depending only on the equivalence class of $n$ modulo 3, such that for any $0\leq \ell\lesssim 6N^2/K$ we have:
    \[
    \#(\mathbb{Z}^3\cap\mathcal{C}_\ell\cap P_n) = \#(\mathbb{Z}^2\cap A(\ell K, (\ell+1)K) + c_n),
    \]
    where $A(r_1, r_2)$ is the elliptical annular region:
    \[
    A(r_1, r_2) := \left\{ (x,y)\in\mathbb{R}^2\,:\,r_1^2\leq (x+\tfrac{y}{2})^2+\tfrac{3}{4}y^2 < r_2^2\right\}.
    \]
    In particular, we have the relationship:
    \[
    \Lambda\cap \{n\in\mathbb{R}^2\,;\, r_1^2 \leq |n|^2\leq r_2^2\} = \mathbb{Z}^2 \cap A(r_1, r_2).
    \]
\end{lemma}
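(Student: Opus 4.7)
The plan is to change coordinates so that the plane $P_n$ is identified with $\mathbb{R}^2$, the cylindrical annulus $\mathcal{C}_\ell$ becomes a round annulus, and the lattice $\mathbb{Z}^3\cap P_n$ becomes a translate of the triangular lattice $\Lambda$; at that point the final identity follows from the standard identification of $\Lambda$ with $\mathbb{Z}^2$ via $(x,y)\mapsto x+ye^{i\pi/3}$.

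\textbf{Step 1 (coordinates on $P_0$).} Note $\xi_1 = (1,0,-1)$ and $\xi_2 = \tfrac{1}{\sqrt{3}}(1,-2,1)$ are an orthogonal basis of $P_0$ with $|\xi_1|=|\xi_2|=\sqrt{2}$, and I introduce coordinates $(u,v)$ on $P_0$ via $x=u\xi_1+v\xi_2$. In these coordinates the parametrization defining $\mathcal{C}_\ell$ becomes $(u,v)=(r\cos\theta,r\sin\theta)$ with $r^2\in I_\ell$, so $\mathcal{C}_\ell\cap P_0$ is literally the round annulus of inner radius $\sqrt{\ell K}$ and outer radius $\sqrt{(\ell+1)K}$ in the $(u,v)$-plane.

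\textbf{Step 2 (identifying the lattice).} Next I observe that $\mathbb{Z}^3\cap P_0$ is a rank-$2$ lattice generated by $\xi_1=(1,0,-1)$ and $(0,1,-1)$. A direct calculation expressing $(0,1,-1)$ in the $\xi_1,\xi_2$ basis gives $(0,1,-1)=\tfrac12\xi_1-\tfrac{\sqrt 3}{2}\xi_2$, so in $(u,v)$-coordinates the lattice $\mathbb{Z}^3\cap P_0$ is generated by $(1,0)$ and $(\tfrac12,-\tfrac{\sqrt 3}{2})$. This is the triangular lattice $\Lambda$ (reflected across the $u$-axis, which is an isometry), hence any circular annulus in the $(u,v)$-plane contains the same number of points of $\mathbb{Z}^3\cap P_0$ as of $\Lambda$.

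\textbf{Step 3 (handling $P_n$ via a base point).} For $P_n$ with $n\neq0$, I choose a distinguished integer point $p_0(n)\in\mathbb{Z}^3\cap P_n$ depending only on $n\bmod 3$: namely $(k,k,k)$, $(k{+}1,k,k)$, or $(k{+}1,k{+}1,k)$ according as $n=3k$, $3k{+}1$, or $3k{+}2$. Then $\mathbb{Z}^3\cap P_n = p_0(n)+(\mathbb{Z}^3\cap P_0)$, while the cylinder axis meets $P_n$ at $(n/3,n/3,n/3)$. Subtracting this center and expressing $p_0(n)-(n/3,n/3,n/3)$ in the $\xi_1,\xi_2$ basis produces a vector $c_n\in\mathbb{R}^2$ depending only on $n\bmod 3$ (one checks the three offsets $0$, $(\tfrac12,\tfrac{\sqrt 3}{6})$, $(\tfrac12,-\tfrac{\sqrt 3}{6})$). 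Thus in $(u,v)$-coordinates
\[
\mathbb{Z}^3\cap \mathcal{C}_\ell\cap P_n\;\longleftrightarrow\; (c_n+\Lambda)\cap\{\sqrt{\ell K}\le |\cdot|\le \sqrt{(\ell+1)K}\},
\]
and translating by $-c_n$ gives $\#(\Lambda\cap(\text{annulus around }-c_n))$.

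\textbf{Step 4 (passing to $\mathbb{Z}^2$).} Finally I use the isomorphism $\mathbb{Z}^2\to\Lambda$, $(x,y)\mapsto x+ye^{i\pi/3}$, under which $|x+ye^{i\pi/3}|^2=(x+\tfrac y2)^2+\tfrac34 y^2$. This transports the annulus $\{r_1\le |n|\le r_2\}\subset\mathbb{R}^2$ (possibly shifted by $-c_n$) precisely to $A(r_1,r_2)$ (shifted by the $\mathbb{Z}^2$-preimage of $c_n$, which I still call $c_n$), yielding the displayed equality and, taking $r_1=\sqrt{\ell K}$, $r_2=\sqrt{(\ell+1)K}$, the claim of the lemma.

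\textbf{Main obstacle.} The proof is essentially a coordinate-change exercise; the only genuinely delicate point is bookkeeping: verifying that the offset $c_n$ truly depends only on $n\bmod 3$ and that the chosen base-point convention is compatible with the $(u,v)$-parametrization, so that the resulting annulus in $\mathbb{Z}^2$ is exactly $A(\sqrt{\ell K},\sqrt{(\ell+1)K})+c_n$ rather than something requiring a further affine adjustment.
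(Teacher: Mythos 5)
Your proof is correct and follows essentially the same route as the paper's: pass to planar coordinates on $P_n$ so the cylindrical annulus becomes a round annulus and $\mathbb{Z}^3\cap P_n$ becomes a translate of the triangular lattice $\Lambda$, then transfer to $\mathbb{Z}^2$ via the shear $(x,y)\mapsto x+ye^{i\pi/3}$ to produce $A(r_1,r_2)$. Your version avoids the paper's $\sqrt 2$-scaling bookkeeping by working directly in the non-Euclidean $(u,v)$-coordinates and makes the choice of base point $p_0(n)$ explicit, but the substance is the same.
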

\begin{proof}
    This is almost trivial, but included for completeness. Observe that the above is a count of the number of lattice points in the affine lattice given by 
    \[
    \Gamma = \{x\,:\, x = (\lfloor\tfrac{n}{3}\rfloor,\lfloor\tfrac{n}{3}\rfloor,\lfloor\tfrac{n}{3}\rfloor + j) + a\xi_1 + \tfrac{b}{2}(\xi_1-\sqrt{3}\xi_2),\,a,b\in\mathbb{Z}\},
    \]
    for $j\equiv n\mod 3$ and $x$ satisfying 
    \[
        \ell K\leq |x-(n/3, n/3, n/3)|^2\leq (\ell+1)K.
    \]
    By a rotation and translation of $\mathbb{R}^3$, we may map $P_n$ to $\{z = 0\}$, and, after identification with $\mathbb{R}^2$ and after possibly choosing new generators,  $\Gamma$ to
    \[
    \Gamma \mapsto \{x\,:\, x = a(1,0) + b(\tfrac{1}{2},\tfrac{\sqrt{3}}{2}),\,a,b\in\sqrt{2}\mathbb{Z}\} = \sqrt{2}\Lambda,
    \]
    and $\mathcal{C}_\ell\cap P_n$ to
    \[
        \{x\in\mathbb{R}^2\,:\,x = c_n + (r\cos(\theta), r\sin(\theta)),\,\theta\in[0,2\pi],\,r^2\in 2I_\ell\},
    \]
    for some $c_n\in\mathbb{R}^2$ depending only on the equivalence class modulo $3$ of $n$. That is, we have shown
    \begin{multline*}
    \#(\mathbb{Z}^3\cap \mathcal{C}_\ell\cap P_n) = \Lambda\cap\{n\in\mathbb{R}^2\,:\,\ell K\leq |n|^2\leq (\ell+1)K\} +c_n\\
    = \{(x,y)\in\Lambda\,:\,\ell K\leq |(x,y)|^2\leq (\ell+1)K\} + c_n.
    \end{multline*}
    Now, let $L^{-1}$ denote the linear transformation that maps $(1,0)\mapsto (1,0)$ and $(0,1)\mapsto (\tfrac12, \tfrac{\sqrt{3}}{2})$. Under this, we see that $L^{-1}\mathbb{Z}^2 = \Lambda$ and hence
    \begin{multline*}
        \{(x,y)\in \Lambda\,:\,\ell K\leq |(x,y)|^2\leq (\ell+1)K\} = \{(x,y)\in\mathbb{Z}^2\,:\,\ell K\leq |L^{-1}(x,y)|^2\leq (\ell+1)K\}\\
        =\{(x,y)\in\mathbb{Z}^2\,:\,\ell K\leq (x+\tfrac{y}{2})^2+\tfrac34 y^2\leq (\ell+1)K\}.
    \end{multline*}
    It follows that, for $c_n\mapsto Lc_n$, we have the desired claim. As the second claim is merely an intermediate step, we are done.
\end{proof}
    With the problem now reduced to the number of lattice points in an elliptical annulus, we are now free to use Gauss circle problem type bounds to handle this. In particular, Huxley's work \cite[Theorem 5]{MR2005876} provides the bound
    \[
    \#\left(\mathbb{Z}^2\cap (A(0, M) + \beta)\right) = \operatorname{Vol}(A(0,1)) M^2 + O(M^{\frac{131}{208}+}).
    \]
    for any $\beta\in\mathbb{R}^2$. 
    Standard differencing then yields that
    \[
        \#\left(\mathbb{Z}^2\cap (A(\ell K, (\ell+1)K) + \beta)\right) = \operatorname{Vol}(A(0,1)) K + O\left((\ell K)^{\frac{131}{416}+}\right)
         = \operatorname{Vol}(A(0,1)) K + O(N^{\frac{131}{208}+}),
    \]
    which, for $\alpha > \frac{131}{208}$, is our unconditional statement. More generally, suppose that $\alpha > 0$ satisfies \ref{Definition: Hypothsis H} and we assume that $K\gtrsim N^\alpha$. Then $\ell K \lesssim N^2$ so that Remark \ref{Remark: H holds for larger} and the second portion of Lemma \ref{Lemma: Lattice count reduction} yields
    \[
        \mathbb{Z}^2\cap (A(\ell K, (\ell+1)K) + \beta) = \Lambda\cap \{n\in\mathbb{Z}^2\,;\, \ell K \leq |n-\tilde{\beta}|^2\leq (\ell+1)K\}\lesssim K.
    \]
    In particular, this yields the following Corollary.
\begin{corollary}\label{Corollary: Annular Lattice Point Count}
    Let $\alpha > 0$ so that \ref{Definition: Hypothsis H} holds, $N^\alpha \lesssim K$, $\ell\lesssim N^2/K$, and $c\in\mathbb{R}^2$. Then
    \[
    \#(\mathbb{Z}^2\cap (A(\ell K,(\ell+1)K) + c))\lesssim K.
    \]
\end{corollary}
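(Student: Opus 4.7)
The plan is essentially bookkeeping of the scales together with a decomposition into thin annuli to which Hypothesis \ref{Definition: Hypothsis H} directly applies. First, applying the second part of Lemma \ref{Lemma: Lattice count reduction} after transporting the shift $c$ through the linear change of variables $L$ used in its proof, I would rewrite
\[
\#\bigl(\mathbb{Z}^2\cap (A(\ell K,(\ell+1)K) + c)\bigr) = \#\bigl(\Lambda \cap \{n\in\mathbb{R}^2 : \ell K \leq |n-\tilde c|^2 \leq (\ell+1)K\}\bigr),
\]
for the appropriate $\tilde c\in\mathbb{R}^2$. This reduces the corollary to a lattice-point count for $\Lambda$ in a genuine Euclidean annulus.

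Next, I would set $M := \sqrt{\ell K}$ and record that the hypotheses $\ell \lesssim N^2/K$ and $N^\alpha \lesssim K$ together yield $M \lesssim N$ and therefore $M^\alpha \lesssim N^\alpha \lesssim K$. This scale separation is exactly what makes \ref{Definition: Hypothsis H} useful: the Euclidean annulus above has radial-squared thickness $K$, which dominates the natural thin-annulus scale $M^\alpha$ permitted by the hypothesis. I would therefore partition $[\ell K,(\ell+1)K]$ into $\lesssim K/M^\alpha + 1$ subintervals of length $\lesssim M^\alpha$, decomposing the annulus into that many sub-annuli with inner radii comparable to $M$ and radial-squared widths $\lesssim M^\alpha$. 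Each such sub-annulus falls within the scope of \ref{Definition: Hypothsis H} (with the hypothesis's $N$ equal to $M$ and its $\beta = \tilde c$), yielding $\lesssim M^\alpha$ lattice points of $\Lambda$ each. Summing produces the claimed bound $\lesssim K$.

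The only edge case is $\ell = 0$, where the region is a disk of radius $\sqrt K$ about $\tilde c$ rather than a genuine annulus. Here the trivial area bound---the number of $\Lambda$-points in any disk of radius $R$ is $O(R^2)$ by a covering with fundamental domains---gives $\lesssim K$ immediately. Beyond these two steps I do not expect any real obstacle: given Lemma \ref{Lemma: Lattice count reduction} and the scale inequality $K\gtrsim M^\alpha$, the corollary is a direct decomposition-and-sum consequence of \ref{Definition: Hypothsis H}, and the main conceptual content has already been absorbed into the hypothesis itself.
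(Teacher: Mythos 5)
Your proof is correct and follows essentially the same route as the paper: reduce to a lattice-point count for $\Lambda$ in a genuine Euclidean annulus via the second claim of Lemma~\ref{Lemma: Lattice count reduction}, then cover the annulus by $\lesssim K/M^\alpha$ thin sub-annuli of radial-squared width $\lesssim M^\alpha$ (with $M=\sqrt{\ell K}\lesssim N$) to which Hypothesis~\ref{Definition: Hypothsis H} applies directly. The paper compresses this decomposition-and-sum step into a citation of Remark~\ref{Remark: H holds for larger}, so your argument is just a more explicit version of the same idea.
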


We now have all the pieces necessary to prove the Lemma \ref{Lemma: Dyadic component bound}.
\begin{proof}[Proof of Lemma~\ref{Lemma: Dyadic component bound}]
    Recall that we seek to show:
    \[
    \frac{1}{K}\sum_{\substack{\sum_{i=1}^3 m_i-\tilde{m}_i = 0\\ K\leq \left|\sum_{i=1}^3 m_1^2-\tilde{m}_i^2\right|\leq 2K\\ |m_i|,|\tilde{m}_i|\leq N}}|\prod_{i=1}^3\widehat{f}(m_1)\widehat{f}(\tilde{m}_i)| \lesssim \|P_{\leq N}f\|_{L^2_x(\mathbb{T})}^6.
    \]
    Now, we first foliate over the linear relationship, and write
    \[
    \frac{1}{K}\sum_{\substack{\sum_{i=1}^3 m_i-\tilde{m}_i = 0\\ K\leq \left|\sum_{i=1}^3 m_1^2-\tilde{m}_i^2\right|\leq 2K\\ |m_i|,|\tilde{m}_i|\leq N}}|\prod_{i=1}^3\widehat{f}(m_1)\widehat{f}(\tilde{m}_i)| \lesssim \frac{1}{K}\sum_{n\in\mathbb{Z}}\sum_{\substack{m,\tilde{m}\in P_n\\K\leq| |m|^2-|\tilde{m}|^2|\leq 2K\\ |m|,|\tilde{m}|\leq 3N}}|\tilde{f}(m)\tilde{f}(\tilde{m})|,
    \]
    where, for $m\in\mathbb{Z}^3$:
    \[
    \tilde{f}(m) = \widehat{f}(m_1)\widehat{f}(m_2)\widehat{f}(m_3).
    \]
    We now introduce the decomposition into annuli as above. In order to do this, we decompose $\mathbb{R}^3 = \sqcup_{\ell\geq 0}\mathcal{C}_\ell$, and denote $\mathcal{C}^n_\ell=P_n\cap\mathcal{C}_\ell$. By almost-orthogonality in the $\mathcal{C}_\ell$ decomposition, we now write our estimate as:
    \begin{multline*}
        \frac{1}{K}\sum_{n\in\mathbb{Z}}\sum_{\substack{m,\tilde{m}\in P_n\\K\leq | |m|^2-|\tilde{m}|^2|\leq 2K\\ |m|,|\tilde{m}|\leq 3N}}|\tilde{f}(m)\tilde{f}(\tilde{m})|\
        \lesssim \frac{1}{K}\sum_{n\in\mathbb{Z}}\sum_{\substack{\ell,j\geq 0\\|l-j|=O(1)}}\|\tilde{f}\|_{\ell^1(\mathcal{C}_\ell^n\cap B(0,10N))}\|\tilde{f}\|_{\ell^1(\mathcal{C}_j^n\cap B(0,10N))},
    \end{multline*}
    where Cauchy-Schwarz and Lemma~\ref{Lemma: Lattice count reduction} allows us to write
    \begin{multline*}
    \|\tilde{f}\|_{\ell^1(\mathcal{C}_\ell^n\cap B(0,10N))}\lesssim \#(\mathbb{Z}^3\cap\mathcal{C}_\ell^n\cap B(0,10N))^\frac{1}{2}\|\tilde{f}\|_{\ell^2(\mathcal{C}_\ell^n\cap B(0,10N))}\\
    \lesssim \#(\mathbb{Z}^2\cap \left(A(\ell K, (\ell+1)K)+c\right)\cap B(0,10N))^\frac{1}{2}\|\tilde{f}\|_{\ell^2(\mathcal{C}_\ell^n\cap B(0,10N))}\\
    \lesssim K^\frac{1}{2}\|\tilde{f}\|_{\ell^2(\mathcal{C}_\ell^n\cap B(0,10N))},
    \end{multline*}
    for some $c$ depending solely on the equivalence class of $n$ modulo $3$. Summarizing, we have found:
    \begin{equation*}
        \frac{1}{K}\sum_{n\in\mathbb{Z}}\sum_{\substack{m,\tilde{m}\in P_n\\K\leq | |m|^2-|\tilde{m}|^2|\leq 2K\\ |m|,|\tilde{m}|\leq 3N}}|\tilde{f}(m)\tilde{f}(\tilde{m})|
        \lesssim 
        \sum_{n\in\mathbb{Z}}\sum_{\substack{\ell,j\geq 0\\|l-j|=O(1)}}\|\tilde{f}\|_{\ell^2(\mathcal{C}_\ell^n\cap B(0,10N))}\|\tilde{f}\|_{\ell^2(\mathcal{C}_j^n\cap B(0,10N))}.
    \end{equation*}
    Another application of Cauchy-Schwarz yields:
    \[
\sum_{n}\sum_{\substack{\ell,j\geq 0\\|l-j|=O(1)}}\|\tilde{f}\|_{\ell^2(\mathcal{C}_\ell^n\cap B(0,10N))}\|\tilde{f}\|_{\ell^2(\mathcal{C}_j^n\cap B(0,10N))}\lesssim \sum_n\|\tilde{f}\|_{\ell^2(P_n)}^2\lesssim \|f\|_{L^2}^6,
    \]
    as desired.
\end{proof}

\section{Global well-posedness of Quintic NLS}\label{Section: Proof of prop 1}
We now return to the discussion of \eqref{Equation: Quintic NLS}, and, for some large $N$, recall of the definition of the $m$ multiplier in equation \eqref{Definition: m definition}. For the remainder of the manuscript, we fix
\[
\lambda \sim N^{\frac{1-s}{s}+}.
\]
and let $\alpha > 0$ be such that \eqref{Equation: Short time L6 thm eqn} holds. We remark that this occurs whenever $\alpha$ satisfies \ref{Definition: Hypothsis H}, but the converse may not be true-- the remainder of the manuscript only requires that \eqref{Equation: Short time L6 thm eqn} holds.

With these conventions, we let $u$ be a solution to \eqref{Equation: Quintic NLS}, scale by $\lambda$, and then apply the $I$ operator. In this fashion, we observe
\[
\|Iu^\lambda\|_{\dot{H}^1_x}\lesssim N^{1-s}\|u^\lambda\|_{\dot{H}^s_x}\sim \frac{N^{1-s}}{\lambda^s}\|u\|_{\dot{H}^s_x}\lesssim N^{0-}\|u\|_{\dot{H}^s_x},
\]
and hence 
\begin{equation}\label{Equation: I homogenous bound}
\|Iu_0^\lambda\|_{H^1_x}\lesssim \|u_0\|_{L^2_x} + N^{0-}\|u_0\|_{\dot{H}^s_x}\lesssim \|u_0\|_{L^2_x},
\end{equation}
for $N\gg 1$.

Furthermore, we observe that if $u^\lambda$ is a solution on $\lambda\mathbb{T}$ for some time $T$, then $u$ solves \eqref{Equation: Quintic NLS} on $\mathbb{T}$ for time $T/\lambda^2$. Additionally, if $u^\lambda$ is a solution for time $T$, then $Iu^\lambda$ solves the $I-$system, given by:
\begin{equation}\label{Equation: NLS with I operator}
    \begin{cases}
    (i\partial_t+\partial_{x}^2)Iu &= \pm I(|u|^4u),\qquad(t,x)\in[0, T]\times\lambda\mathbb{T}\\
    Iu(x,0) &= Iu_0\in H^1(\lambda\mathbb{T}).
    \end{cases}
\end{equation}
While $Iu^\lambda\in H^1$, it does not necessarily have conservation of energy. However, by the Gagliardo-Nirenberg inequality and \eqref{Equation: I homogenous bound} we at least have the existence of a constant $C_1$ such that
\begin{equation}\label{Equation: I method base bound}
\|Iu^\lambda(t)\|_{H^1_x}^2< C_1(E^1_I(u^\lambda)(t) + \|u_0\|_{L^2_x}^2)
\end{equation}
for $\|u_0\|_{L^2_x}\ll 1$. This realization, together with the nice cancellation structure within $E$, motivates the study of $E$ to control the increment of the energy after each new local solution. 

We observe that, even in the presence of the $I-$operator, we may solve \eqref{Equation: NLS with I operator} and have a standard local well-posedness bound. This will enable us to estimate the energy in the $Y^1_T$ space so long as $T\lesssim \lambda^{2-\alpha}N^{-\alpha}$, and is the content of the next Lemma.

\begin{lemma}[Proposition 3.3, \cite{schippa2024improved}]\label{Lemma: LWP}
    Let $0 < s < 1$, $0 < T\sim \lambda^{2-\alpha}N^{-\alpha}$, and $I$ the $I-$operator defined as a Fourier multiplier with multiplier \eqref{Definition: m definition}. There exists a $0 < C\ll 1$ such that if $\|Iu_0\|_{H^1(\lambda\mathbb{T})}< C$, then \eqref{Equation: NLS with I operator} is locally well-posed in $Y^1_T$ (see Section \ref{Section: Atomic Space Overview}), and $\|Iu\|_{Y^1_T}\leq 2C$.
\end{lemma}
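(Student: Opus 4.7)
The plan is to run a standard contraction mapping argument on the Duhamel formulation of \eqref{Equation: NLS with I operator} in the atomic space $Y^1_T$ introduced in Section \ref{Section: Atomic Space Overview}. Write the $I$-system as
\[
Iu(t) = e^{it\Delta}Iu_0 \mp i\int_0^t e^{i(t-t')\Delta}I(|u|^4 u)(t')\,dt',
\]
and define $\Phi(v) = e^{it\Delta}Iu_0 \mp i\int_0^t e^{i(t-t')\Delta}I(|I^{-1}v|^4 I^{-1}v)(t')\,dt'$ on the ball $B_{2C}\subset Y^1_T$. The goal is to show $\Phi$ is a contraction there, provided $\|Iu_0\|_{H^1(\lambda\mathbb{T})} < C$ with $C$ small, and $T\sim \lambda^{2-\alpha}N^{-\alpha}$.

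First I would invoke the linear embedding for atomic spaces, namely $\|e^{it\Delta}Iu_0\|_{Y^1_T}\lesssim \|Iu_0\|_{H^1(\lambda\mathbb{T})} < C$, which handles the free-evolution part. For the Duhamel term, the standard duality between the $Y^1$ and $N^1$ (or $U^2$--$V^2$) spaces reduces the task to proving a quintilinear estimate of the form
\[
\Big\|\int_0^t e^{i(t-t')\Delta}I(|u|^4 u)\,dt'\Big\|_{Y^1_T}\lesssim T^{\theta}\,\|Iu\|_{Y^1_T}^5
\]
for some $\theta > 0$, together with the matching Lipschitz/difference version to close the contraction. Here the short-time Strichartz estimate \eqref{Equation: Short time L6 thm eqn}, after scaling to $\lambda\mathbb{T}$, yields an $L^6_{x,t}$ bound on slabs of length $N^{-\alpha}$, which rescales to a bound on slabs of length $\lambda^{2-\alpha}N^{-\alpha}$ in the scaled torus. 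This is precisely why the admissible time is $T\sim\lambda^{2-\alpha}N^{-\alpha}$: one $L^6$ slab exactly covers $[0,T]$, and H\"older $L^6_{x,t}\times\cdots\times L^6_{x,t}\hookrightarrow L^1_tL^1_x$ on five factors produces a gain that absorbs the extra factor needed to control the $Y^1$ norm.

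The key step, and the main obstacle, is handling the $I$-operator inside the quintilinear expression: since $m$ is not multiplicative under pointwise products, one must commute $I$ past the product and absorb the discrepancy. The standard device is Tao's lemma comparing $I(\prod u_i)$ with $\prod Iu_i$, exploiting that $m$ is smooth, radial and monotone-decreasing with $m(\xi)\lesssim m(\eta)$ whenever $|\xi|\lesssim |\eta|$; after Littlewood--Paley decomposition one localises to frequencies $N_1\geq\ldots\geq N_5$, absorbs the $I$-factor into the highest-frequency factor, and applies the trilinear estimate of Proposition \ref{Proposition: Trilinear Free Estimate} to the highest three frequencies together with a crude $L^\infty_tL^2_x$ embedding for the two lowest ones (or, depending on the frequency configuration, distributes $L^6_{x,t}$ factors uniformly). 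The dyadic sums are summable in $Y^1_T$ because one derivative is placed on the highest frequency while the trilinear estimate provides decay in the ratios of the interacting frequencies.

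Once the quintilinear estimate is in hand with a power $T^\theta$, choosing $C\ll 1$ so that $C^4 \ll 1$ and using that $T^\theta$ is bounded by a constant for $T\sim\lambda^{2-\alpha}N^{-\alpha}$ (or absorbing the $T$-dependence into the smallness of $C$) shows $\Phi$ maps $B_{2C}$ to itself and is a contraction. The fixed point is the unique solution $Iu\in Y^1_T$ with $\|Iu\|_{Y^1_T}\leq 2C$. The difference estimate needed for uniqueness and continuous dependence follows from the same quintilinear bound applied to the telescoping $|u|^4 u - |v|^4 v$.
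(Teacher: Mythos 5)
The paper does not prove this lemma: it cites it directly as Proposition~3.3 of \cite{schippa2024improved}, and the following Remark explains the one structural feature worth noting, namely that smallness of the data (not a power of $T$) is what closes the contraction. So I will assess your sketch against what the standard proof actually looks like rather than against text in this paper.

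The overall scaffolding is right: Duhamel reformulation in $Y^1_T$, the linear embedding $\|\chi_{[0,T)}e^{it\Delta}Iu_0\|_{Y^1}\sim\|Iu_0\|_{H^1}$ from Proposition~\ref{Proposition: Randoma atomic space facts}, the duality estimate for the Duhamel integral reducing matters to a sextilinear space--time integral, and then transference of the short-time $L^6$ Strichartz estimate into $Y^1_T$. However, two points in your sketch are off. First, there is no $T^{\theta}$ gain available here. The $Y^1$ space is scaling-critical, and the Strichartz estimate of Proposition~\ref{Proposition: Scaled Strichartz} on $\mu\mathbb{T}\times\lambda\mathbb{T}$ with $\mu\sim\lambda^{2-\alpha}N^{-\alpha}$ is \emph{lossless} but carries no small parameter; the resulting quintic estimate reads $\lesssim\|Iu\|_{Y^1_T}^5$ with an $O(1)$ constant, and the map $\Phi$ is a contraction on $B_{2C}$ only because $C^4\ll1$. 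Your parenthetical ``(or absorbing the $T$-dependence into the smallness of $C$)'' is actually the only viable mechanism and should be stated as the main point, since this is precisely the origin of the small-data hypothesis in Theorem~\ref{Theorem: GWP theorem} as the paper's Remark after the lemma emphasises. Second, Proposition~\ref{Proposition: Trilinear Free Estimate} is not the right tool for the LWP quintilinear estimate: it requires frequency separation ($N_{\max}\gg|I_2|$) which is not present in the bulk interactions, and it is reserved for the modified-energy estimates later in the paper. What you actually need is the scaled $L^6$ Strichartz of Proposition~\ref{Proposition: Scaled Strichartz} combined with the transference and interpolation machinery of Proposition~\ref{Proposition: Interpolation Properties}, together with the standard monotonicity of $m(\xi)\langle\xi\rangle$ to distribute the $I$ and the derivative onto the highest frequency after a Littlewood--Paley decomposition. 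With those two corrections the argument you outline becomes the standard proof.
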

\begin{remark}
The above lemma is the source of the small data assumption on Theorem \ref{Theorem: GWP theorem}: we require small data in order to close the contraction, see \cite[Proposition 3.1]{schippa2024improved}. On the other hand, \eqref{Equation: I homogenous bound} allows us to instead control the $L^2$ norm.
\end{remark}
In Section \ref{Section: I-method Setup} we prove the following proposition, which is a  modification of the result of \cite{schippa2024improved}, allowing for longer times. The exact definition of $\Lambda_k$, $\tilde{\sigma}_6$, $\overline{M}_6$, and $M_{10}$ don't matter for this proof, and are constructed later on; similarly, $Y_T^s$ is reviewed in Section \ref{Section: Atomic Space Overview}.
\begin{proposition}\label{Proposition: I method bounds}
    Let $t > 0$, $s > 0$, $\lambda \gg 1$, and $u^\lambda$ the solution to \eqref{Equation: Quintic NLS} emanating from $u_0^\lambda\in H^s(\lambda\mathbb{T})$. There are symbols $\tilde{\sigma_6},\, \overline{M}_6:\mathbb{R}^6\to\mathbb{R}$, and $M_{10}:\mathbb{R}^{10}\to\mathbb{R}$ such that
    \begin{equation}\label{Equation: I method FTC}
E^1_I(u^\lambda)(t) = E^1_I(u^\lambda)(0) + [\Lambda_6(\tilde{\sigma}_6)]_{s=0}^t + \int_0^t \Lambda_6(\overline{M}_6) + \Lambda_{10}(M_{10})\,ds.
    \end{equation}
    Moreover, for $0 < t \lesssim \lambda^{2-\alpha}N^{-\alpha}$ there is a $\delta(s) > 0$ such that these terms satisfy the following bounds:
    \begin{align*}
        \left|\Lambda_6(\tilde{\sigma}_6:u^\lambda,\dots,u^\lambda)(t)\right|&\lesssim N^{-\delta(s)}\|Iu^\lambda(t)\|_{H^1(\lambda\mathbb{T})}^6,\\
        \left|\int_0^t\Lambda_6(\overline{M}_6;u^\lambda,\dots,u^\lambda)(s)\,ds\right|&\lesssim N^{-3+}\|Iu^\lambda\|_{Y^1_T}^6,\\
        \left|\int_0^t\Lambda_{10}(M_{10};u^\lambda,\dots,u^\lambda)(s)\,ds\right|&\lesssim N^{-3+}\|Iu^\lambda\|_{Y^1_T}^{10}.
    \end{align*}
\end{proposition}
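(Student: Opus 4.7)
The plan is to follow the standard $I$-method construction but to extract the longer time horizon $T \sim \lambda^{2-\alpha}N^{-\alpha}$ that Theorem \ref{Theorem: Short Time Strichartz} makes available. I would define the modified energy as $E^1_I(u^\lambda) := E(Iu^\lambda) + \Lambda_6(\tilde{\sigma}_6; u^\lambda, \ldots, u^\lambda)$, where $\tilde{\sigma}_6$ is chosen to cancel the non-resonant part of $\frac{d}{dt} E(Iu^\lambda)$. Differentiating $E(Iu^\lambda)$ and applying \eqref{Equation: NLS with I operator}, the failure of $I$ to commute with the nonlinearity produces a sextilinear form over $\Gamma_6$ whose symbol $\sigma_6$ is (a symmetrization of) $\prod_{j=1}^6 m(k_j) - m(k_1+k_2+k_3)m(k_4+k_5+k_6)$. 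Decomposing by the resonance $\Omega = k_1^2 - k_2^2 + k_3^2 - k_4^2 + k_5^2 - k_6^2$ on $\Gamma_6$, I would set $\tilde{\sigma}_6 := \sigma_6 \cdot \mathbf{1}_{\{|\Omega|\gtrsim 1\}}/(i\Omega)$. Then $\frac{d}{dt}\Lambda_6(\tilde{\sigma}_6)$ contributes (i) from the $\partial_x^2$ factors inside $\partial_t u^\lambda$, a sextilinear form that exactly cancels the non-resonant portion of $\frac{d}{dt}E(Iu^\lambda)$, and (ii) from the $|u^\lambda|^4 u^\lambda$ factors, a decilinear form which is $\Lambda_{10}(M_{10})$. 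What remains is the resonant sextilinear piece $\Lambda_6(\overline{M}_6)$, i.e. $\sigma_6$ restricted to $\{|\Omega|\lesssim 1\}$, and integrating in time yields \eqref{Equation: I method FTC}.

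For the pointwise bound on $\Lambda_6(\tilde{\sigma}_6)(t)$, the approach is to pass to the Fourier side and rely on the size and regularity bounds for $\tilde{\sigma}_6$ that will be established in Section \ref{Section: Pointwise Symbol Estimates}: the factor $1/|\Omega|$ produces a gain of order $N_{\max}^{-2}$ beyond $\prod m(k_j)\langle k_j\rangle$, and after Cauchy--Schwarz on $\Gamma_6$ this translates to the required $N^{-\delta(s)}\|Iu^\lambda(t)\|_{H^1}^6$ for some $\delta(s)>0$. For the two integrated estimates I would Littlewood--Paley decompose each entry, then apply H\"older in $(x,t)$ to split the factors into trilinear blocks, invoking Proposition \ref{Proposition: Trilinear Free Estimate} together with the short-time $L^6_{x,t}$ estimate of Theorem \ref{Theorem: Short Time Strichartz} transferred to $\lambda\mathbb{T}$ via \eqref{Equation: Scaling Multiplier}. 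The $\overline{M}_6$ symbol, being supported on $|\Omega|\lesssim 1$, restricts to a small frequency sublocus and yields one geometric series; the $M_{10}$ symbol inherits the $1/\Omega$ factor from $\tilde{\sigma}_6$ and yields a second. Summing the dyadic series and controlling the $Y^1_T$ norms by Lemma \ref{Lemma: LWP} should produce the claimed $N^{-3+}$ factor.

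The hard part will be closing the dyadic summation without losing the $N^{-3+}$. Since $\lambda \sim N^{(1-s)/s+}$ is already enormous, every positive power of $\lambda$ arising from the scaling transfers in \eqref{Equation: Scaling Multiplier} and every loss of $N$ produced by the trilinear Strichartz or by $\ell^1$-summation over Littlewood--Paley blocks must be absorbed into the symbol gains coming from $m$ and from the resonance denominator. This is where the trilinear estimate Proposition \ref{Proposition: Trilinear Free Estimate} becomes indispensable: its loss is placed on the smallest frequency, so the multiplier gains on the high frequencies are preserved, while the extra factor $N^{-\alpha}$ in Theorem \ref{Theorem: Short Time Strichartz} (versus the unit-time estimate) is precisely what forces the geometric sums to converge and pushes the well-posedness threshold from $s > \tfrac{1}{3}$ down to $s > \tfrac{\alpha}{3}$.
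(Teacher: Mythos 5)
Your skeleton---build a modified energy by one round of integration by parts, apply the Fundamental Theorem of Calculus, then close via Littlewood--Paley decomposition, trilinear Strichartz, and scaling transfer---matches the paper's strategy, and you correctly identify the role of Proposition~\ref{Proposition: Trilinear Free Estimate} and the $N^{-\alpha}$ time gain. But the construction of the modified energy itself, on which everything else rests, has a genuine gap. You set $\tilde{\sigma}_6 := \sigma_6\cdot\mathbf{1}_{\{|\Omega|\gtrsim 1\}}/(i\Omega)$ and take $\overline{M}_6$ to be ``$\sigma_6$ restricted to $\{|\Omega|\lesssim 1\}$.'' Both pieces misidentify the target. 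In the paper, $E^1_I$ is \emph{not} the modified energy; it is simply $E(Iu) = \Lambda_2(\sigma_2)+\Lambda_6(\sigma_6)$ with $\sigma_6 = \pm\tfrac16\prod m(k_j)$, and one differentiation gives $\tfrac{d}{dt}E^1_I = \Lambda_6(M_6) + i\Lambda_{10}(\cdots)$ where $M_6 = M^1_6 + \sigma_6\Omega_6$. The dangerous summand is $M^1_6 = \tfrac{i}{6}\sum_j(-1)^{j+1}m^2(k_j)k_j^2$, coming from the kinetic term, which does \emph{not} contain $\Omega_6$ as a factor (the potential-energy part $\sigma_6\Omega_6$ divides trivially). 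Consequently $\overline{M}_6$ must be the restriction of $M^1_6$ to the frequency-configuration resonance set $\mathcal{R}$ of Definition~\ref{Definition: Resonant Set}, not the sub-level set $\{|\Omega|\lesssim 1\}$.

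This is not cosmetic. Take $N_1\sim N_2\gg N_3^*\sim N_4^*$ with $|k_1+k_2|$ nonzero but tiny: then $|\Omega_6|\sim N_1|k_1+k_2| + (N_3^*)^2$ can be $O(1)$ while $|M^1_6|\sim m^2(N_3^*)(N_3^*)^2$, so the quotient $\tilde M_6/\Omega_6$ is huge and the first bullet of the Proposition (which, via Lemma~\ref{Lemma: spatial symbol size estimate}, needs $|\tilde\sigma_6|\lesssim m^2(N_3^*)$ in this regime, and $m(N_1^*)m(N_3^*)$ otherwise) fails outright. The set $\mathcal{R}$ is designed precisely to carve out these configurations (small $|k_1+k_2|$ when $N_1\sim N_2$; pair-resonant highs when $N_1^*\sim N_4^*$; etc.), and Lemma~\ref{Lemma: spatial symbol size estimate} verifies both the size and the derivative regularity hypotheses of Proposition~\ref{Proposition: Regularity} for $\tilde\sigma_6 = \tilde M_6/\Omega_6$ off $\mathcal{R}$, case by case, using bespoke polynomial extensions $\tilde\Omega_6$. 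Without this refined resonance decomposition, none of the pointwise machinery of Lemmas~\ref{Lemma: Pointwise tilde sigma 6}, \ref{Lemma: 10 linear operator}, and~\ref{Lemma: M6 integral term} can be run, and the Proposition does not close. A secondary misstep: the symbol you write as ``a symmetrization of $\prod m(k_j) - m(k_1+k_2+k_3)m(k_4+k_5+k_6)$'' is not what appears here; that commutator structure is absorbed into the paper's $M_6$ through the elongation operators $X_j$ acting on $\sigma_2$ and $\sigma_6$.
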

The above proposition can be viewed as the combination of \cite[Proposition 5.7, 5.10, 5.11]{schippa2024improved}. However, the proofs of \cite{schippa2024improved} required $s > \frac13$, and going beyond $\alpha = 1$ necessitates the introduction of a trilinear Strichartz estimate (see Section \ref{Section: Basic Multilinear Estimates}). 


With this proposition, we are in a position to prove Theorem \ref{Theorem: GWP theorem}. 
\begin{proof}[Proof of Theorem \ref{Theorem: GWP theorem}]
    This is a standard continuity argument, whose details are included for completeness. Let $s > \frac{\alpha}{3}$, $u_0\in H^s(\mathbb{T})$, and
    \[
    B(D) = \{u_0\in L^2(\mathbb{T})\,:\, \|u_0\|_{L^2(\mathbb{T})}< D\}.
    \]
    We seek to show that for any $T > 0$, $D\ll 1$, and $u_0\in B(D)$, there is an $N = N(T, \|u_0\|_{H^s_x})$ so that we may solve the associated $\lambda \mathbb{T}$ problem to time $\lambda^2 T$.

    To that end, we let $T > 0$ and $\lambda \sim N^{\frac{1-s}{s}+}\gg 1$, and observe by \eqref{Equation: I method base bound} we have
    \[
    \|Iu_0^{\lambda}\|_{H^1(\lambda\mathbb{T})}^2\leq C_1(E^1_I(u^\lambda)(0) + \|u_0\|_{L^2(\mathbb{T})}^2)\leq C_2\|u_0\|_{L^2(\mathbb{T})}^2
    \]
    for $N = N(\|u_0\|_{H^s_x})\gg 1$, where $C_1$ was uniform and $C_2$ can be made uniform for $\|u_0\|_{L^2_x} < D$. It follows that for $D\ll 1$, we may make $C_2D^2 < C^2$ small enough to apply Lemma \ref{Lemma: LWP}.
    
    Now, for a fixed $u_0\in B(D)$ and some $\sqrt{C_2} D < \tilde{C} < C$ as in Lemma \ref{Lemma: LWP}, we consider the set
    \[
    A_T := \{s\in [0, \lambda^2 T] \, : \, \|Iu^{\lambda}(t)\|_{H^1(\lambda\mathbb{T})}^2 \leq \tilde{C}\mbox{ for } t\in [0,s]\}.
    \]
    By the preceding remark, this set is non-empty. Additionally, we have by continuity that this set is closed, so it suffices to show that this set is also open. To that end, let $t\in A_T$, and observe that for all $s \in [0,t]\subset [0, \lambda^2 T]$ we have $\|Iu^\lambda\|_{H^1(\lambda\mathbb{T})}\leq \tilde{C} < C$, and hence repeated applications of Lemma \ref{Lemma: LWP} implies
    \begin{multline}
        \left|\int_0^t \Lambda_6(\overline{M}_6) + \Lambda_{10}(\overline{M}_{10})\,ds\right|\leq \sum_{\substack{\sqcup I = [0,t]\\ |I|\sim \lambda^{2-\alpha}N^{-\alpha}}}\left|\int_I \Lambda_6(\overline{M}_6) + \Lambda_{10}(M_{10})\,ds\right|\\
        \leq C_3\sum_{\substack{\sqcup I = [0,t]\\ |I|\sim \lambda^{2-\alpha}N^{-\alpha}}}N^{-3+}\tilde{C}^6\leq C_3\tilde{C}^6N^{-3+\alpha+}\frac{t}{\lambda^{\alpha-2}}\leq C_3\tilde{C}^6N^{-3+\alpha+\alpha\frac{1-s}{s}}T,
    \end{multline}
    where $C_3$ is uniform and 
    \[
-3+\alpha+\alpha\frac{1-s}{s} = -3+\frac{\alpha}{s}.
    \]
    It follows that we have the bound
    \[
    \left|\int_0^t \Lambda_6(\overline{M}_6) + \Lambda_{10}(M_{10})\,ds\right|\leq N^{0-}T,
    \]
    so long as $s > \frac{\alpha}{3}$. Combining the above display with \eqref{Equation: I method base bound} and \eqref{Equation: I method FTC}, we find
    \begin{multline*}
        \|Iu^\lambda(t)\|_{H^1(\lambda\mathbb{T})}^2\leq C_1(\|u_0\|_{L^2(\mathbb{T})}^2 + E^1_I(u^{\lambda})(t))\\
        \leq C_1\|u_0\|_{L^2(\mathbb{T})}^2 + C_1|E^1_I(u^{\lambda})(0)| + C_1\left|[\Lambda_6(\tilde{\sigma}_6)]_{s=0}^t\right| + C_1\left|\int_0^t \Lambda_6(\overline{M}_6) + \Lambda_{10}(M_{10})\,ds\right|\\
        \leq C_2\|u_0\|_{L^2(\mathbb{T})}^2 + C_1N^{0-}T\leq C_2D^2 + C_1 N^{0-}T.
    \end{multline*}
    Hence, for $N = N(\|u_0\|_{H^s_x}, T)\gg 1$ we can iterate Lemma \ref{Lemma: LWP} again. 
    
    The above argument proves that $A_T$ is open for $N = N(\|u_0\|_{H^s_x}, T)$ large enough, and hence $A_T = [0,\lambda^2T]$. Unscaling yields the existence of $u$ on $[0,T]$, and thus for any $T > 0$ the solution emanating from $u_0\in B(D)$ extends to a solution on $[0,T]$. As both $u\in B(D)$ and $T$ were arbitrary, we have shown the global existence of \eqref{Equation: Quintic NLS} for $u_0\in B(D)$.
\end{proof}
\section{Function Spaces \& Multilinear Strichartz Estimates}\label{Section: I-Method Setup & Function Spaces}
The remainder of the manuscript is dedicated to the proof of Proposition \ref{Proposition: I method bounds}. In that direction, we will need to first define the space that will enable us to obtain a local well-posedness lemma, Lemma \ref{Lemma: LWP}, from our short time Strichartz estimate of Theorem \ref{Theorem: Short Time Strichartz}. Moreover, we will need to review basic interpolation results for these spaces so that we will be able to transfer the trilinear Strichartz estimate of Proposition \ref{Proposition: Trilinear Free Estimate} to a trilinear estimate in our space. With these tools under our belt, we will be able to construct the modified energy of \cite{schippa2024improved} and prove Lemmas \ref{Lemma: Pointwise tilde sigma 6}, \ref{Lemma: 10 linear operator}, and \ref{Lemma: M6 integral term}.

\subsection{Atomic Space Overview}\label{Section: Atomic Space Overview}

We will perform our estimates in the atomic spaces of \cite{MR2824485} (see also \cite{MR3200335, MR2094851}), as these behave well in critical scenarios and preserve the embeddings we enjoy. However, these spaces don't behave as well as the $L^2_{x,t}$ based spaces that we normally work with. We may still perform Littlewood-Paley decompositions, but we may not do the crude work we did before when we had that the norms were in terms of the absolute values of the space-time Fourier transforms.

We briefly review their properties below.
\begin{definition}
    Let $X$ be a separable HIlbert space over $\mathbb{C}$, and $\mathcal{Z}$ be the collection of all partitions $-\infty < t_0 < t_1 < \cdots < t_k\leq \infty$. Then for $1\leq p < \infty$ we define $V^pX$ as the space of all right-continuus functions $u:\mathbb{R}\to X$ with $\lim_{t\to-\infty} u(t) = 0$ satisfying
    \[
\|u\|_{V^pX}^p := \sup_{\{t_k\}_{k=0}^K}\sum_{k=1}^K\|u(t_k)-u(t_{k-1})\|_{X}^p < \infty.
    \]
    Additionally, we define the space $V^p_{\Delta}$ as $V^pL^2$ adapted to the linear group $e^{it\Delta}$ as
    \[
    V^p_{\Delta} = \{u\,:\,\mathbb{R}\times \lambda\mathbb{T}\to \mathbb{C}\,:e^{-it\Delta}u\in V^pL^2(\lambda\mathbb{T})\}.
    \]
\end{definition}
$V^p$ spaces also admit the nice predual $U^p$. 
\begin{definition}
    A $U^p$ atom is a function $u:\mathbb{R}\times\lambda\mathbb{T}\to \mathbb{C}$ such that
    \[
u = \sum_{k=1}^K\chi_{[t_{k-1},t_k)}a_k,\quad \sum_{k=1}^K\|a_k\|_{L^2}^p = 1,\quad\{t_k\}_{k=0}^K\in\mathcal{Z}.
    \]
    We then define the $U^p$ norm of $u:\mathbb{R}\times\lambda \mathbb{T}\to \mathbb{C}$ as
    \[
\|u\|_{U^p} := \inf\left(\left\{\sum_{k=1}^\infty |\mu_k|\,:\,u = \sum_{k=1}^\infty \mu_k f_k,\,\,\{\mu_k\}\in\ell^1,\,f_k\mbox{ a }U^p-\mbox{atom}\right\}\right).
    \]
    We similarly define the standard adaptation of $U^p$ to the linear group as $U^p_\Delta$.
\end{definition}
\begin{remark}\label{Remark: Embeddings}
    This definition of $V^p$ is commonly denoted as $V^p_{-,rc}$, \cite{MR2526409}. With this convention it follows that for all $1\leq p<q<\infty$ the embeddings $U^p\subset U^q$, $V^p\subset V^q$, $U^p\subset V^p$, and $V^p\subset U^q$ are all continuous.
\end{remark}

We define the space that we will work in as $Y^s$, which is given in terms of $V^p_\Delta$ as:
\begin{definition}
    Let $s\in\mathbb{R}$. We define $Y^s_{\pm}$ to be the space of functions $u$ for which:
    \[
\|u\|_{Y^s_{\pm}}^2 := \frac{1}{\lambda}\sum_{k\in\frac{1}{\lambda}\mathbb{Z}}\langle k\rangle^{2s}\left\|e^{\pm it|k|^2}\widehat{u}(t,k)\right\|_{V^2}^2 < \infty.
    \]
    Similarly, we define the the standard adaptation to $t\in[0,T]$ as $Y^s_{\pm,T}$.
\end{definition}

We ignore the presence in the conjugates, and simply call these spaces $Y^s$. These spaces enjoy the properties that we demand with respect to the Duhamel operator, as well as both preservation of disjointness and transference of Strichartz estimates. This is summarized in the next proposition.
\begin{proposition}[\cite{schippa2024improved, MR2526409}]\label{Proposition: Randoma atomic space facts}
    Let $s\in\mathbb{R}$ and $T > 0$. If $A$ and $B$ be disjoint subsets of $\lambda\mathbb{Z}$, then
    \[
    \|P_{A\cup B}u\|_{Y^s}^2 = \|P_A u\|_{Y^s}^2 + \|P_B u\|_{Y^s}^2.
    \]
    Moreover, if $f\in H^s(\lambda\mathbb{T})$ then 
    \[
    \|\chi_{[0,T)}(t)e^{it\Delta}f\|_{Y^s}\sim \|f\|_{H^s(\lambda\mathbb{T})},
    \]
    and if $\|e^{it\Delta}f\|_{L^p_{x,t}}\lesssim \|f\|_{L^2_x}$ for all $f\in L^2$ then
    \[
\|u\|_{L^p_{x,t}}\lesssim \|u\|_{U^p_{\Delta}}.
    \]
    
    Lastly, if $F\in L^1_tH^s_x$ then
    \[
    \left\|\chi_{[0,T)}(t)\int_0^te^{i(t-s)\Delta}F(s)\,ds\right\|_{Y^s}\lesssim \sup_{\substack{v\in Y^{-s}\\\|v\|_{Y^{-s}}\leq 1}}\left|\int_0^T\int_{\lambda\mathbb{T}}\overline{v}F(s)\,dxds\right|.
    \]
\end{proposition}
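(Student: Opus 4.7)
\smallskip

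\textbf{Plan of proof.} The four assertions are standard facts about $U^p/V^p$-based spaces and their Hilbert-space adapted variants, so the plan is to unpack each claim at the level of the defining Fourier-side formula for $\|\cdot\|_{Y^s}$ and, where necessary, reduce to the abstract theory of \cite{MR2526409,MR2824485}. In particular the approach for all four statements is uniform: fix the frequency $k \in \tfrac{1}{\lambda}\mathbb{Z}$, study the one-variable function $t \mapsto e^{\pm it|k|^2}\widehat{u}(t,k)$ in $V^2$, and then reassemble in the $\ell^2_k$ sense with the weight $\langle k\rangle^{2s}$.

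For the disjointness identity I would simply note that the defining formula for $\|u\|_{Y^s}^2$ is an $\ell^2$ sum (with the counting measure $(dk)_\lambda$) over frequencies $k$, so projecting onto $A \cup B$ with $A \cap B = \emptyset$ splits the sum frequency-by-frequency. For the free solution bound, applying $e^{-it\Delta}$ to $\chi_{[0,T)}(t)e^{it\Delta}f$ gives a function whose spatial Fourier transform at frequency $k$ equals $\chi_{[0,T)}(t)\widehat{f}(k)$, so the integrand in the definition is a step function with a single jump of size $|\widehat{f}(k)|$ at $t=0$ and another at $t=T$. Its $V^2$ norm is then comparable to $|\widehat{f}(k)|$, and summing with the weight $\langle k\rangle^{2s}$ produces exactly $\|f\|_{H^s(\lambda\mathbb{T})}$.

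For the Strichartz transference I would use the standard atomic argument: by definition any $u \in U^p_\Delta$ can be written as a $\ell^1$ combination $u = \sum_k \mu_k a_k$ of $U^p_\Delta$ atoms, and each atom is of the form $\chi_{[t_{j-1},t_j)}(t)e^{it\Delta}\phi_j$ with $\sum_j \|\phi_j\|_{L^2}^p = 1$. Applying the hypothesized Strichartz estimate on each disjoint time interval and using the disjointness in $L^p_{t,x}$ (for $p\geq 2$) bounds the atom by $1$ in $L^p_{t,x}$, and then the triangle inequality in $\ell^1$ bounds $u$ by $\|u\|_{U^p_\Delta}$.

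The main technical step is the Duhamel estimate, so I would spend most of the effort here; this is where the $U^p/V^p$ duality from \cite{MR2526409} enters. The plan is to first reduce, by orthogonality and the disjointness identity above, to a single frequency mode. On that mode, setting $v(t,k) = e^{\pm it|k|^2}\widehat{u}(t,k)$ where $u$ is the Duhamel term, one finds that $\partial_t v$ equals, up to a phase, $\widehat{F}(t,k)$. The $V^2$-norm of $v$ is then controlled by the $U^2$-predual pairing, giving a supremum over $\|g\|_{U^2} \leq 1$ of $\int \overline{g} \widehat{F}\,dt$; reassembling in $k$ with the weight $\langle k\rangle^{2s}$ and identifying the resulting space dual as $Y^{-s}$ (via $V^2 \hookrightarrow U^2$ up to the Hilbert structure) yields the stated supremum. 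The hard part is verifying that the supremum really is attained on $Y^{-s}$ rather than on a larger $U^2$-based space; this uses the fact that $V^2 \subset U^p$ for $p>2$ together with a limiting argument that replaces rough cutoffs with $Y^{-s}$-admissible ones. Since both pieces are already carefully carried out in \cite[Section 2]{schippa2024improved} and \cite[Section 2]{MR2526409}, I would present the reduction explicitly and then cite those references for the dualization step.
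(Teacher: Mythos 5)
Your proposal is correct and takes the same route the paper implicitly relies on: the paper cites \cite{schippa2024improved,MR2526409} for these facts rather than proving them, and your argument reduces each item to the definition of $Y^s$ and the abstract $U^p/V^p$ theory of \cite{MR2526409} in exactly the standard way. One small correction on the Duhamel step. After writing $\partial_t\big(e^{it|k|^2}\widehat u(t,k)\big)=e^{it|k|^2}\widehat F(t,k)$ at each frequency and invoking the duality, the frequency-wise test functions $g_k$ one produces satisfy $\|g_k\|_{U^2}\le 1$, and reassembling them (with the $\langle k\rangle^{2s}$ weights, the conjugate phases, and a single normalizing scalar) yields a test function whose $Y^{-s}$ norm is $\lesssim 1$ simply because of the \emph{elementary} inclusion $U^2\subset V^2$; no limiting argument is required. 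The embedding $V^2\subset U^p$ for $p>2$ that you flag as the crux is not what is needed at this step --- that inclusion is instead what underlies the interpolation statement of Proposition~\ref{Proposition: Interpolation Properties}, which the paper uses later for the trilinear transference, not in the present proposition.
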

These spaces enjoy nice interpolation properties, which is the primary way we will obtain multilinear estimates in these spaces.
\begin{proposition}[Propositions 2.19 \& 2.20, \cite{MR2526409}]\label{Proposition: Interpolation Properties}
    Let $T_0:L^2\times\dots\times L^2\mapsto L^1_{\mathrm{loc}}(\mathbb{T};\mathbb{C})$ be an $n-$linear operator and $1\leq p,q \leq \infty$. If
    \[
\|T_0(e^{\cdot i\Delta}\phi_1,\dots,e^{\cdot i\Delta}\phi_n)\|_{L^p_t(\mathbb{R};L^q_x(\mathbb{T}))}\lesssim \prod_{j=1}^n\|\phi_j\|_{L^2},
    \]
    then there is an extension $T:U^p_\Delta\times\dots\times U^p_\Delta\to L^p_t(\mathbb{R};L^q_x(\mathbb{T}))$ such that $T(u_1,\dots u_n)(t)(x) = T_0(u_1(t),\dots, u_n(t))(x)$ almost everywhere and 
    \[
\|T(u_1,\dots,u_n)\|_{L^p_t(\mathbb{R};L^q_x(\mathbb{T}))}\lesssim \prod_{j=1}^n\|u_j\|_{U^p_\Delta}.
    \]
    Suppose now that $1\leq 2 < p$, $j \geq 2$, $E$ is a Banach space, $T:(U^p_\Delta)^j\to E$ is a bounded linear operator with $\|T(\{u_k\})\|_{E}\leq C_p\prod_{k=1}^j\|u_k\|_{U^p_\Delta}$ for all $\{u_k\}\in (U^p_\Delta)^j$, and that there is some $C_2\in (0, C_p]$ such that $\|T(\{u_k\})\|_{E}\leq C_2\prod_{k=1}^j\|u_k\|_{U^2_\Delta}$ holds for all $\{u_k\}\in (U^2_\Delta)^j$. Then $T$ also satisfies
    \[
\|T(\{u_k\})\|_{E}\lesssim_{p}C_2\left(\ln\frac{C_p}{C_2} + 1\right)^j\prod_{k=1}^j\|u_k\|_{V^2_\Delta},\qquad \{u_k\}\in (V^p_\Delta)^j.
    \]
\end{proposition}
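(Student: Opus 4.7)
The plan is to handle the two assertions separately, as they rest on different mechanisms.

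For the first (transference) claim, the approach is atomic decomposition in each input slot. Given $u_k \in U^p_\Delta$, write $e^{-it\Delta}u_k = \sum_m \mu_{k,m} a_{k,m}$ with $\sum_m |\mu_{k,m}| \lesssim \|u_k\|_{U^p_\Delta}$ and each $a_{k,m}$ a $U^p$-atom, i.e.\ a step function $\sum_\ell \chi_{[t_{\ell-1}, t_\ell)}\phi^{k,m}_\ell$ with $\sum_\ell \|\phi^{k,m}_\ell\|_{L^2}^p \le 1$. By $n$-linearity the problem reduces to bounding $T_0$ applied to an $n$-tuple of atoms after common refinement of the underlying partitions; on each sub-interval of the common refinement the free-wave hypothesis yields the bound $\prod_j \|\phi^{k_j,m_j}_\ell\|_{L^2}$ for the $L^p_t L^q_x$-norm on that interval. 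Summing in $\ell$ via the $\ell^p$-constraint in the atom definition (and H\"older across disjoint time intervals to match the outer $L^p_t$), and then in $m$ via the triangle inequality, furnishes the desired estimate; the extension is well-defined on all of $U^p_\Delta$ because atomic sums are dense.

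For the second (log-interpolation) claim, the key is a structural decomposition of $V^2_\Delta$ into $U^2_\Delta + U^p_\Delta$: for each threshold $M > 0$ and each $u \in V^2_\Delta$ there exist $v_M \in U^2_\Delta$ and $w_M \in U^p_\Delta$ with $u = v_M + w_M$ and norms controlled in a complementary fashion by $\|u\|_{V^2_\Delta}$ and $M$. I would build this splitting greedily from the $V^2$-optimal partition of $u$: group adjacent intervals whose cumulative $L^2$-jump is below $M$ into pieces of a $U^2$-atom, and isolate the remaining large-jump pieces into a $U^p$-atom, using that the cardinality of the large-jump set is controlled by $\|u\|_{V^2_\Delta}^2 / M^2$ to convert the $\ell^2$ accounting from $V^2$ into an $\ell^p$ accounting compatible with the $U^p$-atom definition. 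Applying this splitting to each argument of $T$ independently and expanding multilinearity yields $2^j$ terms; on the pure-$v$ term invoke the $C_2$-bound, and on any term containing at least one $w$-factor invoke the $C_p$-bound. Summing the resulting dyadic contributions in $M$ across the range $1 \lesssim M \lesssim C_p/C_2$ (beyond which one of the two bounds is already trivially smaller than the other) produces the logarithmic factor $(\ln(C_p/C_2)+1)^j$, the exponent $j$ arising from the $j$-fold independent decomposition.

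The main obstacle is the quantitative $V^2 \hookrightarrow U^2 + U^p$ splitting lemma with the balanced control of both pieces, which is the technical engine of the Hadac--Herr--Koch framework and cannot be obtained from abstract interpolation; it depends on careful bookkeeping with the optimal $V^2$-partition and on the right-continuity convention implicit in the definition of $V^2$. Once that lemma is in hand, the transference step and the multilinear interpolation accounting are both essentially combinatorial, and the log-loss exponent $j$ tracks transparently through the expansion.
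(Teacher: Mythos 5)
This proposition is cited verbatim from Hadac--Herr--Koch (Propositions 2.19 and 2.20 of \cite{MR2526409}); the paper does not reproduce a proof, so there is no ``paper's own proof'' to compare against. Your sketch correctly identifies the two mechanisms that the cited proofs rest on: atomic decomposition with a common refinement of partitions for the transference principle, and a quantitative $V^2 \hookrightarrow U^2 + U^p$ splitting with a balancing threshold for the logarithmic interpolation. A few points of bookkeeping are worth tightening. In the transference step, the summation over subintervals of the common refinement is not Hölder; it is the $\ell^p$-additivity of $L^p_t$ over disjoint time intervals, followed by the crude observation that the tuples $(k_1,\dots,k_n)$ appearing in the refinement form a subset of the full index product, so that
\[
\sum_{\ell}\ \prod_{j=1}^n\big\|\phi^{j}_{k_j(\ell)}\big\|_{L^2}^p \ \le\ \prod_{j=1}^n\ \sum_{k}\big\|\phi^{j}_{k}\big\|_{L^2}^p \ \le\ 1.
\]
For the interpolation step, the cleanest route to the $(\ln(C_p/C_2)+1)^j$ factor is to iterate the \emph{scalar} statement one slot at a time, using $U^2\subset V^2\subset U^p$ to supply the needed $U^p$-bound for slots already frozen in $V^2$; each pass contributes one logarithm. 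Your simultaneous $2^j$-term expansion with $j$ independent dyadic decompositions can be made to work, but the accounting for the mixed $v$--$w$ terms (where you must downgrade the $U^2$ factors to $U^p$ via the embedding before invoking the $C_p$-bound) should be spelled out, since otherwise a reader may wonder where the exponent $j$ on the logarithm actually comes from. With those clarifications your outline matches the HHK argument.
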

The first part of the above proposition is the analogue of the usual transference principle utilized for Bourgain spaces. The second enables us to upgrade multilinear estimates for operators, $T$, that are bounded for functions in $U^p$, to estimates of $T$ considered as operators from $V^2_\Delta$, which is desirable due to our Strichartz embedding properties for $U^p_\Delta$. These above results trivially extend to $Y^s$.

Before concluding, we also remark the following by Bernstein:
\[
\|P_N u\|_{L^\infty_{x,t}([0,T]\times\lambda\mathbb{T})}\lesssim N^{\frac12}\|P_Nu\|_{L^\infty_tL^2_x([0,T]\times\lambda\mathbb{T})}\lesssim N^{\frac12}\|P_Nu\|_{U^p_\Delta}\lesssim N^{\frac12}\|P_Nu\|_{V^2_\Delta}\sim \|P_Nu\|_{Y^{\frac12}},
\]
for any, say, $2 < p < \infty$. This will be utilized, together with the Strichartz embedding, to yield estimates in $Y^s_{T}$.
\subsection{Basic Multilinear Estimates}\label{Section: Basic Multilinear Estimates}
For the remainder of the manuscript we will work on $\lambda\mathbb{T}$ at time scale $T\sim \lambda^{2-\alpha}N^{-\alpha}$. To ease exposition, we define 
\begin{equation}\label{Equation: mu definition}
    \mu := \lambda^{2-\alpha}N^{-\alpha},
\end{equation}
so that our multilinear estimates will instead be over $\mu\mathbb{T}\times\lambda\mathbb{T}$.

Before proceeding, we state the estimate of Theorem \ref{Theorem: Short Time Strichartz} in this setting, which was the original motivation for the study of short time estimates in connection with global well-posedness. 
\begin{proposition}\label{Proposition: Scaled Strichartz}
    If $\phi\in L^2(\lambda\mathbb{T})$, $N\gg 1$, $\lambda\sim N^\frac{1-s}{s}\gg 1$, and $M\in 2^\mathbb{N}$, then we find the following Strichartz estimate:
    \begin{align*}
        \|P_{M}e^{it\Delta}\phi\|_{L^6(\mu\mathbb{T}\times\lambda\mathbb{T})}\lesssim 
        \begin{cases}
            \|P_M\phi\|_{L^2(\lambda\mathbb{T})} & M\lesssim N\\
            (\lambda M)^{0+}\|P_M\phi\|_{L^2(\lambda\mathbb{T})} & M\gg N.
        \end{cases}
    \end{align*}
\end{proposition}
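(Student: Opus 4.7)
The plan is to reduce both cases to the unscaled torus $\mathbb{T}$ via the scaling identity \eqref{Equation: Scaling Multiplier}, and then apply Theorem \ref{Theorem: Short Time Strichartz} in the low-frequency range and Bourgain's classical $L^6_{x,t}$ Strichartz estimate in the high-frequency range. The key observation, already highlighted in Section \ref{Section: Notation}, is that for $p_1 = p_2 = 6$ the scaling exponent $\sigma(6,6) = 0$, so rescaling is lossless at the $L^6_{x,t}$ level.

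First I would set $\nu = \lambda^{-1}$ in \eqref{Equation: Scaling Multiplier} to obtain the identity
\[
\|P_M e^{it\Delta_{\lambda\mathbb{T}}}\phi\|_{L^6(\mu\mathbb{T}\times\lambda\mathbb{T})} = \|P_{\lambda M} e^{it\Delta_{\mathbb{T}}}\phi^{\lambda^{-1}}\|_{L^6([0,\mu/\lambda^2]\times\mathbb{T})},
\]
after checking that the linear Schr\"odinger evolution commutes with rescaling and that projection onto the annulus $\{|k|\sim M\}\subset\tfrac{1}{\lambda}\mathbb{Z}$ transforms to projection onto $\{|k|\sim \lambda M\}\subset \mathbb{Z}$. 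With the definition \eqref{Equation: mu definition} of $\mu$, the rescaled time length is $\mu/\lambda^2 = (\lambda N)^{-\alpha}$, which is the value of $\tilde N^{-\alpha}$ in Theorem \ref{Theorem: Short Time Strichartz} corresponding to $\tilde N = \lambda N$.

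In the low frequency range $M\lesssim N$, we have $\lambda M\lesssim \lambda N$, so $P_{\lambda M}$ equals $P_{\leq \lambda N}P_{\lambda M}$ and Theorem \ref{Theorem: Short Time Strichartz} applied at the threshold $\tilde N = \lambda N$ yields
\[
\|P_{\lambda M} e^{it\Delta}\phi^{\lambda^{-1}}\|_{L^6(\mathbb{T}\times [0,(\lambda N)^{-\alpha}])}\lesssim \|P_{\lambda M}\phi^{\lambda^{-1}}\|_{L^2(\mathbb{T})} = \|P_M\phi\|_{L^2(\lambda\mathbb{T})},
\]
since $L^2$ is invariant under the scaling. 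In the high frequency range $M\gg N$, the short-time Strichartz estimate at threshold $\lambda M$ only controls time $(\lambda M)^{-\alpha}$, which is too short; however, since $\lambda,N\gg 1$ we have $(\lambda N)^{-\alpha}\leq 1$, so it suffices to enlarge the time interval to $[0,1]$ and apply Bourgain's $L^6_{x,t}(\mathbb{T}\times[0,1])$ Strichartz estimate \cite{Bour1}, picking up the standard $(\lambda M)^{0+}$ loss. Unscaling as before yields the desired bound.

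The main subtlety is purely bookkeeping: confirming that the time scale $\mu/\lambda^2$ produced by the $\nu = \lambda^{-1}$ rescaling agrees with $(\lambda N)^{-\alpha}$, so that Theorem \ref{Theorem: Short Time Strichartz} can be invoked at exactly threshold $\lambda N$ in the low-frequency regime, and that the frequency projection rescales to the correct annulus $\{|k|\sim \lambda M\}$ on $\mathbb{Z}$. There is no genuine obstacle beyond verifying these scalings, since the two regimes are covered by results that are already at our disposal.
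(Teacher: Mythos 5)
Your proposal is correct and essentially identical to the paper's proof: rescale by $\nu = \lambda^{-1}$ (using $\sigma(6,6)=0$) to transfer the estimate to $\mathbb{T}\times[0,(\lambda N)^{-\alpha}]$, apply Theorem~\ref{Theorem: Short Time Strichartz} at threshold $\lambda N$ when $M\lesssim N$, and enlarge to a unit time interval and invoke Bourgain's $L^6_{x,t}(\mathbb{T}\times\mathbb{T})$ estimate when $M\gg N$. The bookkeeping you flag (the rescaled time being $\mu/\lambda^2=(\lambda N)^{-\alpha}$ and $L^2$ being scale-invariant) is exactly what the paper relies on.
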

\begin{proof}
    This is a straightforward scaling argument. Observe that
    \[
\|P_{M}e^{it\Delta}\phi\|_{L^6(\mu\mathbb{T}\times\lambda\mathbb{T})} = \|P_{\lambda M}e^{it\Delta}\phi^{\lambda^{-1}}\|_{L^6((\lambda N)^{-\alpha}\mathbb{T}\times\mathbb{T})},
    \]
    so that if $M\lesssim N$ then we are in a position to apply Theorem \ref{Theorem: Short Time Strichartz}, while if $M\gg N$ we apply the standard $L^6$ estimate:
    \begin{multline*}
    \|P_Me^{it\Delta}\phi\|_{L^6_{x,t}(\mu\mathbb{T}\times\lambda\mathbb{T})} \lesssim \|P_{\lambda M} e^{it\Delta}\phi^{\lambda^{-1}}\|_{L^6_{x,t}(\mathbb{T}\times\mathbb{T})}\\
    \lesssim (\lambda M)^{0+}\|P_{\lambda M}\phi^{\lambda^{-1}}\|_{L^2_x(\mathbb{T})} = (\lambda M)^{0+}\|P_{M}\phi\|_{L^2_x(\mathbb{\lambda T})}.
    \end{multline*}
\end{proof}

Due to the above transference principle, this space preserves multilinear Strichartz estimates, such as those in \cite{de2007global}. In that direction, we first state the scale $T\sim 1$ bilinear Strichartz estimates given on $\lambda\mathbb{T}$ of \cite{li2011global}.

\begin{proposition}[Proposition 2.1 \& Corollary 2.1, \cite{li2011global}]\label{Proposition: Generic bilinear estimates}
    Let $M \gg 1$ and
    \[
    \widehat{I^{\pm}_M(f,g)}(k) = \int_{k=k_1+k_2}\chi_{|k_1\mp k_2|\gtrsim M}\widehat{f}(k_1)\widehat{g}(\pm k_2)(dk_1)_\lambda.
    \]
    Then for all $\phi_1,\phi_2\in L^2(\lambda\mathbb{T})$ we have the following bilinear Strichartz estimate:
    \[
    \|I^{\pm}_M(e^{it\Delta}\phi_1,e^{it\Delta}\phi_2)\|_{L^2_{t,x}(\mathbb{T}\times\lambda\mathbb{T})}\lesssim \left( \frac{1}{M}+\frac{1}{\lambda}\right)^{\frac12}\|\phi_1\|_{L^2(\lambda \mathbb{T})}\|\phi_2\|_{L^2(\lambda\mathbb{T})}.
    \]
\end{proposition}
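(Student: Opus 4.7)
My plan is to use the classical spacetime Fourier argument of Bourgain: reduce the bilinear estimate to a lattice-point counting estimate on the level set of the Schr\"odinger phase, and then exploit the separation condition to bound that count. For concreteness I focus on $I^+_M$; the $-$ case follows by a parallel argument after the reflection $k_2\mapsto -k_2$ places it in the same resonance geometry.

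Introduce a Schwartz cut-off $\eta\in C_c^\infty(\mathbb{R})$ with $\eta\equiv 1$ on $[-1,1]$, so that $\|\cdot\|_{L^2_t(\mathbb{T})}\leq \|\eta\cdot\|_{L^2_t(\mathbb{R})}$. Plancherel in both $x\in\lambda\mathbb{T}$ and $t\in\mathbb{R}$ then gives
\[
\|\eta\, I^+_M(e^{it\Delta}\phi_1,e^{it\Delta}\phi_2)\|_{L^2_{t,x}}^2
= \frac{1}{\lambda}\sum_{k}\int_{\mathbb{R}}\Bigl|\frac{1}{\lambda}\sum_{\substack{k_1+k_2=k\\|k_1-k_2|\gtrsim M}}\widehat{\eta}(\tau+k_1^2+k_2^2)\widehat{\phi_1}(k_1)\widehat{\phi_2}(k_2)\Bigr|^2 d\tau.
\]
Applying Cauchy--Schwarz in the inner $k_1$-sum with weight $|\widehat{\eta}(\tau+k_1^2+k_2^2)|$ separates a counting factor $C(k,\tau)$ from the remaining $\widehat{\eta}$-weighted $\ell^2$ pairing of $|\widehat{\phi_1}|^2 |\widehat{\phi_2}|^2$. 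Using the Schwartz decay of $\widehat{\eta}$ to treat it as (essentially) the indicator of a unit window, the whole estimate reduces to the uniform lattice bound
\[
C(k,\tau) := \#\bigl\{k_1\in\tfrac{1}{\lambda}\mathbb{Z}\ :\ |2k_1-k|\gtrsim M,\ |\tau+k_1^2+(k-k_1)^2|\lesssim 1\bigr\} \lesssim 1+\frac{\lambda}{M}.
\]

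To establish the counting bound, parametrize the fiber $k_1+k_2=k$ by the difference $h=k_1-k_2=2k_1-k$, which ranges over the coset $-k+\tfrac{2}{\lambda}\mathbb{Z}$; then $k_1^2+k_2^2=(k^2+h^2)/2$, so the near-resonance condition pins $h^2$ into a unit interval. Since $|h|\gtrsim M$ and $\partial_h h^2 = 2h$, the admissible $h$-range has length $\lesssim 1/M$, giving at most $1+\lambda/M$ lattice points. Feeding this back, with $\|\widehat{\eta}\|_{L^1}\lesssim 1$ and Plancherel $\sum |\widehat{\phi_j}|^2 = \lambda\|\phi_j\|_{L^2(\lambda\mathbb{T})}^2$, one obtains
\[
\|I^+_M(\cdots)\|_{L^2_{t,x}}^2
\lesssim \frac{1}{\lambda}\Bigl(1+\frac{\lambda}{M}\Bigr)\|\phi_1\|_{L^2}^2\|\phi_2\|_{L^2}^2,
\]
which is the claimed bound. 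The main obstacle is the counting estimate itself: the ``$1$'' captures isolated resonant pairs, the ``$\lambda/M$'' reflects the densest accumulation of $\tfrac{1}{\lambda}\mathbb{Z}$-points in a window of width $1/M$, and together they produce the sharp $(\tfrac{1}{\lambda}+\tfrac{1}{M})^{1/2}$ prefactor, which interpolates between the Euclidean limit $\lambda\to\infty$ and the strong-dispersion limit $M\to\infty$.
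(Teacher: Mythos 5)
The paper does not prove this proposition; it cites it directly from Li, Wu, \& Xu, and the discussion that immediately follows actually sets the bilinear estimate aside (after rescaling to $\mu\mathbb{T}\times\lambda\mathbb{T}$ it is too lossy) in favour of trilinear estimates. So there is no internal proof to compare against. Your argument for $I^+_M$ is correct and is exactly the standard Bourgain spacetime-Fourier counting argument: Plancherel, Cauchy--Schwarz with weight $|\widehat{\eta}|$, reduction to the uniform count $C(k,\tau)\lesssim 1+\lambda/M$, and the observation that with $h=2k_1-k$ and $k_1^2+k_2^2=\tfrac{1}{2}(k^2+h^2)$ the constraint $|h|\gtrsim M$ pins $h$ to a window of length $\lesssim 1/M$; the normalization and $\lambda$-power bookkeeping come out right.

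The gap is in dismissing the $I^-_M$ case by ``the reflection $k_2\mapsto -k_2$.'' That substitution does nothing to the resonance geometry: $(-k_2)^2=k_2^2$, so the phase $e^{-it(k_1^2+k_2^2)}$ is unchanged, while the separation $|k_1+k_2|\gtrsim M$ is merely a cutoff on the output frequency $|k|\gtrsim M$, which places no lower bound on the free variable $h=2k_1-k$. Near $h=0$ the level set $\{|h^2-c|\lesssim 1\}$ has length $O(1)$ rather than $O(1/M)$, the count is $\sim\lambda$, and the argument collapses. The correct mechanism in the ``$-$'' (i.e.\ $u\overline{v}$) case is genuinely different: the conjugate evolution flips one sign, the resonance function becomes $k_1^2-k_2^2=k(k_1-k_2)$, which is linear in the free variable, and then $|k|\gtrsim M$ localizes $k_1-k_2$ to a window of length $\lesssim 1/M$. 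That is a short but separate counting step, not a reflection of the $I^+_M$ one. (As literally printed, with both slots under $e^{it\Delta}$ and no conjugate, the $I^-_M$ bound does not even hold: two unit-width bumps with $k_1,k_2\in[M,M+1]$ already saturate the left side at $O(1)\|\phi_1\|\|\phi_2\|$. The intended reading, as in the cited source, is with $\overline{g}$, and your proof should handle that form directly rather than by reflection.)
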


We now observe that if $\mu$ is as in \eqref{Equation: mu definition}, then
\begin{multline*}
\|I^{\pm}_M(e^{it\Delta}\phi_1,e^{it\Delta}\phi_2)\|_{L^2_{t,x}(\mu\mathbb{T}\times \lambda\mathbb{T})} = \mu^\frac14\|I^{\pm}_{\sqrt{\mu}M}(e^{it\Delta}\phi_1^{\sqrt{\mu}^{-1}},e^{it\Delta}\phi_2^{\sqrt{\mu}^{-1}})\|_{L^2_{t,x}(\mathbb{T}\times \tfrac{\lambda}{\sqrt{\mu}}\mathbb{T})}\\
\lesssim \left(\frac{1}{M}+\frac{\mu}{\lambda}\right)^\frac12 \|\phi_1\|_{L^2_x(\lambda\mathbb{T})}\|\phi_2\|_{L^2_x(\lambda\mathbb{T})}.
\end{multline*}
This would be acceptable only if $\alpha\geq 1$, which is not the case. However, the key issue with the above is that we lose a factor of $\mu^\frac14$, which we cannot overcome, and is due to the fact that the $L^4$ norm is not scale invariant. We overcome this issue with a weaker estimate in the form of a trilinear $L^2$ estimate-- an estimate that no longer loses an extra $\mu^\frac14$ factor.

The use of trilinear estimates in this exact setting goes back to Bourgain \cite[Lemma 4.1]{MR2124457}, where he proved that \eqref{Equation: Quintic NLS} was globally well-posed for \textit{some} $s < \frac12$. Schippa \cite[Theorem 1.2]{Schippa2024} then quantified the result of Bourgain, proving:
\begin{theorem}[Theorem 1.2, \cite{Schippa2024}]\label{Theorem: Bourgain Trilinear}
Let $0 < \alpha \leq 1$ and $N_j\in 2^\mathbb{N}$ $(1\leq j\leq 3)$ be large. Suppose that $N_1\geq N_2\geq N_3$ and for some $1 \geq \beta > 0$:
\[
N_1 > N_3^{1+\beta}.
\]
Then for any $\varepsilon > 0$ and $f_j$ $(1\leq j\leq 3)$ with $\mathrm{supp }\widehat{f}_j\subset [N_j, 2N_j]$ we have:
\[
\int_{\mathbb{T}\times [0, N_1^{-\alpha}]}\prod_{j=1}^3 |e^{it\Delta}f_j|^2\,dxdt\lesssim_{\varepsilon, \beta}\log(N_1)^{12+\varepsilon}N_1^{-\frac{\alpha\beta}{8}}\prod_{j=1}^3\|f_j\|_{L^2_{x}}^2.
\]
\end{theorem}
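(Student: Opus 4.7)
The plan is to reduce the trilinear $L^2$ inequality to a lattice counting problem, paralleling the proof of Theorem~\ref{Theorem: Short Time Strichartz}. Writing $e^{it\Delta}f_j(x)=\sum_n \widehat{f}_j(n)e^{i(nx-n^2 t)}$, orthogonality in $x\in\mathbb{T}$ enforces the resonance $\sum_{j=1}^3(n_j-m_j)=0$, while integration over $t\in[0,N_1^{-\alpha}]$ supplies a damping factor of size $\min(N_1^{-\alpha},|\Omega|^{-1})$ with $\Omega:=\sum_j(n_j^2-m_j^2)$. After a dyadic decomposition $|\Omega|\sim K$, the problem reduces to proving, for each dyadic $K\in[1,N_1 N_2]$ (and up to polylogarithmic loss),
$$\min(N_1^{-\alpha},K^{-1})\sum_{(\vec n,\vec m)\in S_K}\prod_{j=1}^3|\widehat{f}_j(n_j)\widehat{f}_j(m_j)| \,\lesssim\, N_1^{-\alpha\beta/8}\prod_{j=1}^3\|f_j\|_{L^2}^2,$$
where $S_K := \{(n_j,m_j)_{j=1}^3\in\prod_j[N_j,2N_j]^2 : \sum_j(n_j-m_j)=0,\ |\Omega|\sim K\}$.

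The central algebraic observation is that, on introducing $A:=(m_2+m_3)-(n_2+n_3)$, the linear constraint reads $n_1-m_1=-A$ (with $|A|\lesssim N_2$) and the modulation factors as $\Omega=-A(n_1+m_1)+B$, where $B:=(n_2^2-m_2^2)+(n_3^2-m_3^2)$. Consequently, for each fixed low-frequency quadruple $(n_2,m_2,n_3,m_3)$ with $A\neq 0$, the sum $n_1+m_1$ is pinned to an interval of length $\lesssim K/|A|$, restricting $(n_1,m_1)$ to $\lesssim 1+K/|A|$ values in place of the $\sim N_1$ otherwise available. Cauchy--Schwarz in $n_1$ then extracts $\|f_1\|_{L^2}^2$ at the cost of $(1+K/|A|)^{1/2}$; a second Cauchy--Schwarz in one of the remaining variables (say $n_3$, with $m_3$ determined from $A$) extracts $\|f_2\|_{L^2}^2\|f_3\|_{L^2}^2$ up to a factor polynomial in $N_3$. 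After summing in $A\neq 0$ and in dyadic $K\geq N_1^\alpha$, the separation hypothesis $N_1>N_3^{1+\beta}$ guarantees that the resulting bound carries a net saving of the form $N_3^{-\alpha\beta}$, which via $N_3\leq N_1^{1/(1+\beta)}$ yields a polynomial gain over $N_1$. A final interpolation with the trivial $L^6$-Strichartz bound of Theorem~\ref{Theorem: Short Time Strichartz} (which delivers $\prod\|f_j\|_{L^2}^2$ with no savings) sharpens this to the claimed exponent $\alpha\beta/8$.

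The main obstacle will be the diagonal stratum $A=0$, where the short-time window offers no control over $n_1$ and one formally recovers the full factor $N_1$. However, $A=0$ imposes the low-frequency resonance $n_2-m_2=m_3-n_3=:d$, so the $n_1$-sum decouples and produces exactly $\|f_1\|_{L^2}^2$; the residual bilinear expression in $(n_2,m_2,n_3,m_3)$ reduces, via the factorization $\Omega=d\bigl((n_2+m_2)-(n_3+m_3)\bigr)$, to the same type of counting problem but at the smaller scale $N_3$, with $d$ playing the role of $A$. Rerunning the preceding argument returns a gain of at most $N_3^{-\alpha\beta}$, which suffices after interpolation. Finally, the $\Omega=0$ contribution $h(0)$ is absorbed into the favourable prefactor $N_1^{-\alpha}$ exactly as in \cite{Bour1}, and the logarithmic loss $\log(N_1)^{12+\varepsilon}$ accumulates from the several dyadic sums (in $K$, in $|A|$, and in the Littlewood--Paley localizations of each $f_j$), with the twelfth power absorbing the repeated Cauchy--Schwarz steps.
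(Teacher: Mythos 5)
This theorem is cited by the paper from Schippa's \cite{Schippa2024}, and the paper does not reprove it; so there is no internal proof to compare against. Assessing your argument on its own terms, the algebraic heart is right --- writing $A=(m_2+m_3)-(n_2+n_3)$ and factoring $\Omega=-A(n_1+m_1)+B$ is indeed the identity that makes these trilinear short-time bounds work --- but the quantitative bookkeeping does not close, and the gain $N_1^{-\alpha\beta/8}$ never actually appears.

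The most concrete problems are these. First, the Cauchy--Schwarz in $n_1$ does \emph{not} produce the factor you attribute to it. Since $m_1=n_1+A$ is a bijection, the sum $\sum_{n_1\in S}|\widehat f_1(n_1)||\widehat f_1(n_1+A)|$ is $\le\|f_1\|_{L^2}^2$ with no restriction cost; the bound $|S|\lesssim 1+K/|A|$ is simply not used by that step. If one instead wants to exploit the size of $S$, one must pay with $\ell^\infty$ bounds on $\widehat f_1$, which yields a factor $\sim(1+K/|A|)\|\widehat f_1\|_\infty^2$ rather than the square root you claim, and this is not obviously favorable. Second, the ``second Cauchy--Schwarz\dots\ up to a factor polynomial in $N_3$'' is unquantified and, done naively, produces $\|\widehat f_2\|_{\ell^1}^2\|\widehat f_3\|_{\ell^1}^2\lesssim N_2N_3\prod\|f_j\|_{L^2}^2$, a loss that no subsequent saving in the range allowed by the theorem can repair. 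Third, the closing ``interpolation with the $L^6$ Strichartz bound'' cannot sharpen anything: that estimate carries no gain, so interpolating a lossy bound against it can only produce something in between, never a net saving of size $N_1^{-\alpha\beta/8}$. Finally, the hypothesis $N_1>N_3^{1+\beta}$ is invoked only rhetorically; it never enters a concrete inequality, and the regime in which $N_3$ is very small (so $N_3^{-\alpha\beta}\sim 1$) is not addressed. What is missing is the genuinely arithmetic ingredient behind Bourgain's Lemma 4.1 and Schippa's Theorem 1.2: a divisor/level-set counting step (e.g.\ that the number of $(n_1,m_1)$ with $n_1^2-m_1^2=\tau\ne 0$ and $|n_1|\lesssim N_1$ is $\lesssim_\varepsilon N_1^\varepsilon$), which is what converts the restricted $(n_1,m_1)$-count into an $L^2$-type bound without the $\ell^1$ losses above. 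As written, the proposal sketches the right decomposition but does not constitute a proof.
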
 

While one could certainly utilize Theorem \ref{Theorem: Bourgain Trilinear}, one would not be able to obtain the full range $s > \alpha/3$. Instead, we opt to prove a different trilinear Strichartz estimate that, while worse with respect to losses, will be better in terms of gain in $N$. The guiding idea is that the trilinear $L^2$ estimate on $\tfrac{1}{\lambda}\mathbb{T}$ should look very similar to the bilinear $L^2$ estimate on $(\tfrac{1}{\lambda}\mathbb{T})^2$. Indeed, the argument of the proof of Proposition  
\ref{Proposition: Trilinear Free Estimate} follows closely the argument of \cite[Proposition 4.6]{de2007global}.

\begin{proposition}\label{Proposition: Trilinear Free Estimate}
    Let $\lambda\gg 1$, $I_1, I_2, I_3\subset \frac{1}{\lambda}\mathbb{Z}$ be dyadic intervals such that $|I_1|\lesssim |I_2|\lesssim |I_3|$. If $N_{\max}$ is defined by
    \[
    N_{\max} = \max(N_{13}, N_{23})\gg 1, \quad N_{ij} = \mathrm{dist}(I_i,I_j),\quad 1\leq i\leq  j\leq 3,
    \]
    and satisfies $N_{\max}\gg |I_2|$, then for any $\phi_j\in L^2(\mathbb{T})$, $1\leq j\leq 3$, with $\mathrm{supp}\,\widehat{\phi}_j\subset I_j$ we have the trilinear estimate:
    \begin{equation*}\label{Equation: Trilinear base}
\|\prod_{j=1}^3e^{it\Delta}\phi_j\|_{L^2_{x,t}(\lambda\mathbb{T}\times\mathbb{T})}\lesssim \left(\frac{1}{\lambda} + \frac{|I_2|}{N_{\max}}\right)^\frac{1}{2}\prod_{j=1}^3\|\phi_j\|_{L^2_x(\lambda\mathbb{T})}.
    \end{equation*}
    Alternatively, if $N_{\max}\gtrsim |I_2|$ and $|I_1|\ll|I_2|$, $I_1-I_3\subset B(0, J)$ for some $J > 0$, and
 \begin{equation}\label{Condition: M condition}
M := \frac{|I_1|(J+|I_1|)}{N_{23}}\ll |I_2|\wedge N_{23},
 \end{equation}
    then we find the enhanced trilinear estimate:
    \begin{equation*}\label{Equation: Trilinear HHL}
\|\prod_{j=1}^3e^{it\Delta}\phi_j\|_{L^2_{x,t}(\lambda\mathbb{T}\times\mathbb{T})}\lesssim \left(\frac{1}{\lambda} + \frac{M\vee |I_1|}{N_{\max}}\right)^\frac{1}{2}\prod_{j=1}^3\|\phi_j\|_{L^2_x(\lambda\mathbb{T})}.
    \end{equation*}
\end{proposition}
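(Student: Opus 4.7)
The plan is to emulate the bilinear approach of \cite[Proposition 4.6]{de2007global} via a direct lattice count. Writing $\prod_{j=1}^3 e^{it\Delta}\phi_j$ on the Fourier side, multiplying by a smooth time cutoff $\chi\in C_c^\infty(\mathbb{R})$ equal to $1$ on $[0, 2\pi]$, and applying Plancherel in both variables reduces the square of the norm to
\[
\frac{1}{\lambda^5}\sum_{n\in\tfrac{1}{\lambda}\mathbb{Z}}\int_{\mathbb{R}}\left|\sum_{\substack{k_1+k_2+k_3=n\\ k_j\in I_j}}\widehat{\chi}\bigl(\tau+k_1^2+k_2^2+k_3^2\bigr)\prod_{j=1}^3\widehat{\phi}_j(k_j)\right|^2 d\tau.
\]
The rapid decay of $\widehat{\chi}$ effectively localizes to $|\tau+k_1^2+k_2^2+k_3^2|\lesssim 1$, and Cauchy--Schwarz in the inner sum followed by exchanging the order of summation yields
\[
\left\|\prod_{j=1}^3 e^{it\Delta}\phi_j\right\|_{L^2_{x,t}(\lambda\mathbb{T}\times\mathbb{T})}^2 \lesssim \frac{\mathcal{N}}{\lambda^2}\prod_{j=1}^3\|\phi_j\|_{L^2(\lambda\mathbb{T})}^2,
\]
where $\mathcal{N}$ is the supremum over $(n,\tau)$ of the number of triples $(k_1,k_2,k_3)\in\prod_j I_j\cap(\tfrac{1}{\lambda}\mathbb{Z})^3$ with $k_1+k_2+k_3=n$ and $|k_1^2+k_2^2+k_3^2-\tau|\leq 1$. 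The two estimates thus reduce to proving $\mathcal{N}\lesssim \lambda+\lambda^2|I_2|/N_{\max}$ in the base case and $\mathcal{N}\lesssim \lambda+\lambda^2(M\vee|I_1|)/N_{\max}$ in the enhanced case.

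For the base estimate, by symmetry between $k_1$ and $k_2$ I may assume $N_{\max}=N_{23}$. Fixing $k_1\in I_1\cap\tfrac{1}{\lambda}\mathbb{Z}$ (giving at most $\lambda|I_1|+1$ choices), the residual constraint $k_2+k_3=n-k_1$ together with the thickened quadratic $k_2^2+k_3^2=\tau-k_1^2+O(1)$ forces $(k_2-k_3)^2$ into an interval of length $O(1)$, hence $k_2-k_3$ into an interval of length $O(1/|k_2-k_3|)$. Combined with $|k_2-k_3|\geq N_{23}$, this permits at most $1+\lambda/N_{23}$ admissible values of $k_2-k_3$ in $\tfrac{1}{\lambda}\mathbb{Z}$; once $k_2-k_3$ is chosen, $(k_2,k_3)$ is determined. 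Multiplying these counts and invoking $|I_1|\leq|I_2|$ together with $N_{\max}\gg|I_2|$ gives the claimed bound.

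For the enhanced estimate, the additional inclusion $I_1-I_3\subset B(0,J)$ confines every contributing $k_3$ to the set $I_3\cap(I_1-B(0,J))$, an interval of length $\lesssim J+|I_1|$. For each such $k_3$, the quadric constraint again produces at most $1+\lambda/N_{23}$ admissible values of $k_1-k_2$ --- here using $|k_1-k_2|\geq N_{23}-J\sim N_{23}$, which follows from the hypothesis $M\ll N_{23}$ --- while the inclusion simultaneously restricts $k_1$ to $I_1\cap(k_3+B(0,J))$, an interval of length $\min(|I_1|,2J)$, capping the admissible $k_1$-count by $\lambda\min(|I_1|,2J)+1$. Taking the minimum of these two per-$k_3$ counts and summing over the restricted range of $k_3$ naturally produces the quantity $M=|I_1|(J+|I_1|)/N_{23}$ as the effective 2D scale, obtained by balancing the quadric precision against the box restriction; the hypothesis $M\ll|I_2|\wedge N_{23}$ is precisely what ensures this refined count improves upon the base one.

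The principal obstacle is the enhanced-case count: the appearance of $M\vee|I_1|$ reflects a transition between the regime where the quadric precision $1/N_{23}$ is binding and the regime where the $k_1$-box restriction $\min(|I_1|,2J)$ is binding, and verifying the case analysis and the correct accounting when the hypothesis $M\ll|I_2|\wedge N_{23}$ holds marginally requires careful bookkeeping. Beyond this, the remaining steps --- extracting the $L^2_{x,t}$ bound from the lattice counts, and managing the parity alignment of the sub-lattices underlying changes of variable such as $(k_2+k_3,k_2-k_3)$ --- are essentially routine, and the overall structure parallels the bilinear argument of \cite[Proposition 4.6]{de2007global}.
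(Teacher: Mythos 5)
Your reduction to a lattice-count problem (controlling a supremum count $\mathcal{N}$ over $(n,\tau)$ and dividing by $\lambda^2$) is the same as the paper's, but the counting argument itself is a different and insufficient route. In the base case you bound $\mathcal{N}$ by $(\lambda|I_1|+1)(1+\lambda/N_{23})$; the cross term $\lambda|I_1|$ is not controlled by the target $\lambda+\lambda^2|I_2|/N_{\max}$. For instance with $|I_1|\sim|I_2|\gg 1$ and $N_{\max}\gg\lambda$ (allowed by the hypotheses), $\lambda|I_1|$ exceeds both $\lambda$ and $\lambda^2|I_2|/N_{\max}$. The slicing rounds up to at least one admissible $(k_2,k_3)$ for each of the $\sim\lambda|I_1|$ choices of $k_1$, even though for most such $k_1$ the $O(1/N_{23})$-window in $k_2-k_3$ misses the lattice $\tfrac{1}{\lambda}\mathbb{Z}$ entirely; in the bilinear estimate the analogous $+1$ is benign because there is no outer sum over $k_1$, but here it is fatal. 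The paper avoids this by treating the count as genuinely two-dimensional: after a linear change of variables the admissible set becomes the intersection of a thin elliptical annulus with a disk or rectangle, which is trapped between two ellipse sectors sharing their straight radial boundaries, and a Gauss-type area-plus-boundary lattice-point theorem delivers $\lambda^2 K/N_{\max}+O(\lambda)$.

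The enhanced case is further off. You use $I_1-I_3\subset B(0,J)$ to shrink the range of $k_3$ to an interval of length $\lesssim J+|I_1|$, and you assert $|k_1-k_2|\geq N_{23}-J\sim N_{23}$ ``follows from $M\ll N_{23}$.'' But $M=|I_1|(J+|I_1|)/N_{23}\ll N_{23}$ does \emph{not} give $J\ll N_{23}$, and in the intended application (the corollary that follows the proposition) one takes $J\sim N_{\max}\sim N_{23}$, so your restriction on $k_3$ is vacuous and your count scales like $\lambda(J+|I_1|)(1+\lambda/N_{23})\sim\lambda^2$, far above $\lambda+\lambda^2(M\vee|I_1|)/N_{\max}$. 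The paper uses $J$ in an essentially different way: $J$ bounds $|n_1-n_3|$, which controls the linear coefficient appearing after substituting $x=\pm c_2\lambda|I_1|$ on the boundary of the thin rectangle and completing the square in $y$; a Taylor expansion then shows the annulus-rectangle intersection is confined to a ball of radius $O(\lambda(M\vee|I_1|))$ around the origin, after which the same sector count applies with $K=M\vee|I_1|$. That geometric localization of the intersection is the missing idea, and slicing over $k_3$ does not recover it.
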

\begin{proof}
    This follows in a similar manner as in \cite{de2007global} \& \cite{li2011global}. Specifically, we begin by obtaining a bound on the size of $A$, defined as 
    \begin{multline*}
        A = \{n_1,n_2\in\mathbb{Z}/\lambda\,:\,n_1\in I_1,\,n_2\in I_2,\,n_3:=n-n_1-n_2\in I_3,\,|n_1-n_3|\gtrsim N_{13}\\
        |n_2-n_3|\gtrsim N_{23}, \tau-n_1^2-n_2^2-(n-n_1-n_2)^2 = O(1)\,\},
    \end{multline*}
    in the form of
    \[
    \sup_{n,\tau}\#A\lesssim \lambda^2\frac{K}{N_{\max}}+O(\lambda),
    \]
    for some $K\ll N_{\max}$. Once we have this in hand, we observe after taking the supremum and applying Cauchy-Schwarz that
    \[
\|\prod_{j=1}^3e^{it\Delta}\phi_j\|_{L^2_{x,t}(\lambda\mathbb{T}\times\mathbb{T})}\lesssim \left(\frac{1}{\lambda^2}\sup_{n,\tau}\#A\right)^\frac12\prod_{j=1}^3\|\phi_j\|_{L^2_x(\lambda\mathbb{T})},
    \]
    and hence we may conclude our estimate.

    Turning our attention to $\sup_{n,\tau}\#A$, we follow \cite{de2007global} in that we seek to transfer this question to a question about lattice points in a disk that also lie in an elliptical annulus. To that end, we fix an element $(n_1,n_2)\in A$, and its corresponding $n_3 = n-n_1-n_2$. We then will count the number of $\ell_1,\ell_2\in\mathbb{Z}$ such that $(n_1+\ell_1/\lambda, n_2+\ell_2/\lambda)\in A$. That is, 
    \begin{align*}
    n_1+\tfrac{\ell_1}{\lambda}&\in I_1,\\
    n_2+\tfrac{\ell_2}{\lambda}&\in I_2,\\
    n-n_1-n_2-\tfrac{\ell_1+\ell_2}{\lambda}&\in I_3,\\
    |n_1-n_3-\tfrac{2\ell_1+\ell_2}{\lambda}|&\gtrsim N_{13},\\
    |n_2-n_3-\frac{\ell_1+2\ell_2}{\lambda}|&\gtrsim N_{23}
    \end{align*}
    together with the property that 
    \[
    \tau - (n_1+\tfrac{\ell_1}{\lambda})^2-(n_2+\tfrac{\ell_2}{\lambda})^2 - (n_3-\tfrac{\ell_1+\ell_2}{\lambda})^2 = O(1).
    \]
    Expanding the above, we find that 
    \begin{multline*}
        (n_1+\tfrac{\ell_1}{\lambda})^2+(n_2+\tfrac{\ell_2}{\lambda})^2 + (n_3-\tfrac{\ell_1+\ell_2}{\lambda})^2\\
        = n_1^2+n_2^2+n_3^2+\frac{1}{\lambda^2}(\ell_1^2+\ell_2^2+(\ell_1+\ell_2)^2) + 2\frac{\ell_1}{\lambda}(n_1-n_3)+2\frac{\ell_2}{\lambda}(n_2-n_3)\\
        = \tau+O(1) + \frac{2}{\lambda^2}(\ell_1^2+\ell_2^2+\ell_1\ell_2)+2\frac{\ell_1}{\lambda}(n_1-n_3)+2\frac{\ell_2}{\lambda}(n_2-n_3),
    \end{multline*}
    and also that
    \begin{align*}
    \left|\frac{\ell_j}{\lambda}\right| &= \left|\frac{\ell_j}{\lambda}+n_j-n_j\right|\lesssim |I_j|,\quad j=1,2\\
    |n_1-n_3|&\gtrsim N_{13}\\
    |n_2-n_3|&\gtrsim N_{23}.
    \end{align*}
    Combining the above and relabeling $\tilde{a} = n_1-n_3$, $\tilde{b} = n_2-n_3$, we see that it is sufficient to count $x,y\in\mathbb{Z}$ satisfying:
    \begin{align}
        \big|x^2+y^2+xy+&\lambda(x\tilde{a}+y\tilde{b})\big|\leq c_1\lambda^2\label{Equation: Elliptical Annular Region Pre}\\
        |x| \leq c_2\lambda |I_1|&, \quad |y| \leq c_2\lambda|I_2|\label{Equation: Rectangular Region Pre}\\
        \tilde{a}^2+\tilde{b}^2&\sim N_{\max}^2,\nonumber
    \end{align}
    for some constants $c_1, c_2> 0$.
    
    One can view the first condition above as an elliptical annulus-- define $u = x-y$, $v = x+y$, $a = \tilde{a}-\tilde{b}$, and $b = \tfrac{1}{3}(\tilde{a}+\tilde{b})$, with which we may rewrite the portion within the absolute value as:
    \[
        x^2+y^2+xy+\lambda(x\tilde{a}+y\tilde{b}) = \frac{1}{4}\left(u+\lambda a \right)^2 + \frac{3}{4}\left(v + \lambda b\right)^2
        - \frac{\lambda^2}{4}a^2-\frac{3\lambda^2}{4}b^2.
    \]
    In particular, each such radius defines an ellipse centered at $(-\lambda a, -\lambda b)$ with major axis in the $x-y$ direction, minor axis in the $x+y$ direction, and extending $\sim \lambda N_{\max}$ in both directions. That is, it's morally a circle with radius $O(\lambda N_{\max})$. 

    We now examine the second condition above, which defines a rectangle centered at $(0,0)$. If $|I_1|\sim |I_2|$, then we can view this as nearly a circle with radius $O(\lambda |I_1|)$. However, if $|I_1|\ll |I_2|$, then this defines a long rectangle with major axis in the direction of the $y-$axis, and minor axis of length $|I_1|$. Gains in this region will come from the observation that, under certain conditions, we may shrink the intersection from the full rectangle to a much smaller disk. 
    

    We now demonstrate that if $|I_1|\ll |I_2|$ and condition \eqref{Condition: M condition}, as well as the conditions around it, hold, then the elliptical annulus defined via \eqref{Equation: Elliptical Annular Region Pre} intersects in a strictly smaller region than $B(0,|I_2|)$, and that the region this forms is contained in an $O(M\vee |I_1|)$ ball around the origin. To that end, let $c_1, c_2$ be the constants in equations \eqref{Equation: Elliptical Annular Region Pre} and \eqref{Equation: Rectangular Region Pre}, respectively, and assume without loss of generality that $\tilde{b} > 0$. We let $B(c_2) = c_2|I_1|+\tilde{b}$, and observe that substitution of $x =  c_2\lambda|I_1|$ into equation \eqref{Equation: Elliptical Annular Region Pre} yields:
    \[
    (y+\tfrac{\lambda}{2}B)^2 = \tfrac{\lambda^2B^2}{4}\left\{1+4\frac{c_1-c_2^2|I_1|^2-c_2|I_1|\tilde{a}}{B^2}\right\}.
    \]
    Where we realize the inner term satisfies
    \[
    \left|\frac{c_1-c_2^2|I_1|^2-c_2|I_1|\tilde{a}}{B^2}\right|\lesssim \frac{M}{N_{23}}\ll 1,
    \]
    and hence we may unwind and utilize a Taylor series bound to conclude
    \[
y(c_1, c_2) = -\frac{\lambda}{2}\left\{c_2|I_1|+\tilde{b} - |c_2|I_1|+\tilde{b}|\right\} + O\left(\lambda M\right)
 = O\left(\lambda M\right).
    \]
    A similar argument allows us to conclude that $y(j_1c_1, j_2 c_2) = O(\lambda M)\ll \lambda |I_2|$ for any $j_1,j_2\in\{-1,1\}$. 
    
    This observation implies that the intersection is contained in the region $|x|, |y| = O(\lambda(|I_1|\vee M))$, and hence will live in a smaller ball with equation given by:
    \[
    x^2+y^2\leq c_3\lambda^2(|I_1|\vee M)^2,
    \]
    where we are free to write this in terms of our new variables, up to some multiplicative loss on $c_3$.
    
    To summarize, we have reduced our original system to finding the number of $u,v\in\mathbb{Z}$ such that:
    \begin{align}
        (u+\lambda a)^2+3(v+\lambda b)^2&= R +\lambda^2\left(a^2 + 3b^2\right)\quad |R|\leq c\lambda^2\label{Equation: Ellipse Shell}\\
        u^2+v^2&\leq c_3\lambda^2 K^2\label{Equation: Circle}\\
        a^2+b^2&\sim N_{\max}^2\label{Equation: a b condition},
    \end{align}
    where we take
    \begin{equation}\label{Equation: M Definition}
K := \begin{cases}
    |I_1|\vee M & \mbox{if }\eqref{Condition: M condition} \mbox{ holds}\\
    |I_2| & \mbox{else}.
\end{cases}
    \end{equation}
    We remark that, pictorially and philosophically, this is only a minor variation of \cite[Figures 2 \& 3]{de2007global}.

    We now seek to count the number of elements in the region formed by conditions \eqref{Equation: Ellipse Shell}-\eqref{Equation: a b condition}. For fixed $a,b$, we define the set 
    \[
    A^\lambda := \{(u,v)\in\mathbb{Z}^2\,:\eqref{Equation: Ellipse Shell}-\eqref{Equation: a b condition}\,\mathrm{holds}\},
    \]
    and observe that $A^\lambda = \lambda A^1$, where
    \begin{multline*}
    A^1 = \{(u,v)\in\mathbb{Z}^2\,:\,(u+a)^2+3(v+b)^2= R +\left(a^2 + 3b^2\right),\,|R|\leq c,\\
    \,u^2+v^2\leq c_3K^2,\,a^2+b^2\sim N_{\max}^2\}.
    \end{multline*}
    Hence, the desired claim would follow if we showed
    \begin{equation}\label{Equation: Desired Bound}
|A^\lambda|\lesssim \lambda^2\frac{K}{N_{\max}} + O(\lambda),
    \end{equation}
    where $K$ is as in \eqref{Equation: M Definition}. To that end, we let $\theta$ denote the angle between the two tangent lines to the circle
    \[
    u^2+v^2= c_3K^2
    \]
    which pass through the point $(-a,-b)$, and note that
    \[
    \sin\theta \lesssim \frac{K}{N_{\max}}.
    \]
    It follows that for $N_{\max}\gg K$ we have $\theta \lesssim K/N_{\max}\ll 1$. 
    
    Let $S(R)$ for $|R| \leq c$ denote the the sector of the ellipse 
    \[
    (u+a)^2+3(v+b)^2= R +\left(a^2 + 3b^2\right)
    \]
    whose boundaries are formed by the two tangent lines above and the boundary of the ellipse. In this way, $S(R)$ is a sector of the ellipse with angle given by $\theta$, and that $A^1\subset\mathbb{Z}^2\cap S(c)\setminus S(-c)$.

    Combining the above together with a theorem of Gauss (see, e.g. \cite{MR1945283}), we find that
    \[
    |A^\lambda|= |\mathbb{Z}^2\cap \lambda A^1|\leq |\mathbb{Z}^2\cap \lambda S(c)| - |\mathbb{Z}^2\cap\lambda S(-c)| = \lambda^2 |S(c)\setminus S(-c)| + O(\lambda).
    \]
    However, as the axis are similar in size we have (from, say, a trivial integral calculation) that 
    \[
    |S(c)\setminus S(-c)|\lesssim \theta,
    \]
    and hence
    \[
|A^\lambda|\lesssim \lambda^2\frac{K}{N_{\max}}+O(\lambda),
    \]
    which is exactly the desired bound, \eqref{Equation: Desired Bound}.
\end{proof}
\begin{remark}
    It's worth noting that the above does not require all intervals present to be disjoint, but does require some amount of seperation. For example, the first conclusion holds even when $I_2\subset I_3$, so long as $\operatorname{dist}(I_3, I_1)\gg |I_2|$. The key is simply that we have enough seperation to guarantee that $|I_2|/N_{\max}$ (resp. $M\vee |I_1|/N_{\max}$) is small.
\end{remark}

The extension of the trilinear estimates of Proposition \ref{Proposition: Trilinear Free Estimate} to the $Y^s$ spaces follows by a standard scaling argument, together with the transference principle and interpolation result of Proposition \ref{Proposition: Interpolation Properties}.
\begin{proposition}\label{Lemma: trilinear Estimates}
    Suppose $s < \frac{\alpha}{2}$, and let $L\in 2^{\mathbb{N}_0}$, $I_1, I_2, I_3$ be intervals contained in $[-10L, 10L]$ and $M$ satisfy the conditions of Proposition \ref{Proposition: Trilinear Free Estimate}, with
    \[
    K := 
\begin{cases}
    |I_1|\vee M & \mbox{if }\eqref{Condition: M condition}\\
    |I_2| &\mbox{else}.
\end{cases}
    \]
    Then, for
    \[
    N_{13} = \mathrm{dist}(I_1, I_3)\quad\mbox{ and }\quad N_{23} = \mathrm{dist}(I_2, I_3)
    \]
    with $K\ll \max(N_{13}, N_{23})$, we have the following trilinear Strichartz estimate:
    \[
    \|P_{I_1}u_1P_{I_2}u_2P_{I_3}u_3\|_{L^2_{x,t}(\mu\mathbb{T}\times\lambda\mathbb{T})}\lesssim \log^3(\langle \lambda L\rangle)\left(\frac{1}{N}+\frac{K}{\max(N_{13}, N_{23})}\right)^\frac{1}{2}\prod_{j=1}^3\|P_{I_j}u_j\|_{Y^0_{+,\mu}}.
    \]
    Moreover, equivalent statements hold for conjugates so long as one has the analogous statement for $-I_j$.
\end{proposition}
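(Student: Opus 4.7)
The proof proceeds in three standard steps --- scaling, transference, and interpolation in the atomic spaces --- with the main task being to verify that the scaling delivers a clean constant so that the hypothesis $s<\alpha/2$ can be used to absorb the resulting $\sqrt{\mu}/\lambda$ factor into $1/N$.

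First, I would apply the change of variables $(y,s)=(x/\sqrt{\mu},t/\mu)$, which maps $\lambda\mathbb{T}\times[0,\mu]$ onto $(\lambda/\sqrt{\mu})\mathbb{T}\times[0,1]$, preserves the Schr\"odinger flow (sending initial data $\phi_j$ on $\lambda\mathbb{T}$ to $\tilde\phi_j(y):=\phi_j(\sqrt{\mu}y)$ on $(\lambda/\sqrt{\mu})\mathbb{T}$), and scales every frequency by $\sqrt{\mu}$, so that $K$, $N_{13}$, and $N_{23}$ all pick up a factor of $\sqrt{\mu}$ while the ratio $K/\max(N_{13},N_{23})$ is invariant. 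A direct Jacobian computation gives $\|v_1v_2v_3\|_{L^2(\mu\mathbb{T}\times\lambda\mathbb{T})}^2=\mu^{3/2}\|w_1w_2w_3\|_{L^2((\lambda/\sqrt{\mu})\mathbb{T}\times[0,1])}^2$, while $\|\tilde\phi_j\|_{L^2}^2=\mu^{-1/2}\|\phi_j\|_{L^2}^2$, so the powers of $\mu$ cancel once Proposition \ref{Proposition: Trilinear Free Estimate} is applied on the rescaled domain. This produces, for free solutions $u_j=e^{it\Delta}\phi_j$,
\[
\Big\|\prod_{j=1}^3 P_{I_j}u_j\Big\|_{L^2(\mu\mathbb{T}\times\lambda\mathbb{T})}\lesssim \Big(\frac{\sqrt{\mu}}{\lambda}+\frac{K}{\max(N_{13},N_{23})}\Big)^{1/2}\prod_{j=1}^3\|P_{I_j}\phi_j\|_{L^2(\lambda\mathbb{T})}.
\]
Using $\lambda\sim N^{(1-s)/s+}$ and $\mu=\lambda^{2-\alpha}N^{-\alpha}$, one computes $\sqrt{\mu}/\lambda=N^{-\alpha/(2s)+}$; the assumption $s<\alpha/2$ forces this exponent to be strictly less than $-1$, so the first term is bounded by $1/N$ and the scaling-produced constant collapses to $C_2:=(1/N+K/\max(N_{13},N_{23}))^{1/2}$.

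Next, I would invoke the transference principle (first part of Proposition \ref{Proposition: Interpolation Properties}) to promote this free-solution bound to
\[
\Big\|\prod_{j=1}^3 P_{I_j}u_j\Big\|_{L^2(\mu\mathbb{T}\times\lambda\mathbb{T})}\lesssim C_2\prod_{j=1}^3\|P_{I_j}u_j\|_{U^2_\Delta},
\]
and separately derive the crude companion bound
\[
\Big\|\prod_{j=1}^3 P_{I_j}u_j\Big\|_{L^2(\mu\mathbb{T}\times\lambda\mathbb{T})}\leq\prod_{j=1}^3\|P_{I_j}u_j\|_{L^6_{x,t}}\lesssim (\lambda L)^{0+}\prod_{j=1}^3\|P_{I_j}u_j\|_{U^6_\Delta}
\]
via H\"older and the $L^6$ Strichartz estimate of Proposition \ref{Proposition: Scaled Strichartz} (again transferred to the atomic setting), giving the companion constant $C_6\lesssim(\lambda L)^{0+}$.

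Finally, I would apply the interpolation statement (second part of Proposition \ref{Proposition: Interpolation Properties}) in each of the three slots in turn. Each application produces a factor $\log(C_6/C_2)+1\lesssim\log(\langle\lambda L\rangle)$ (using $C_2^{-1}\lesssim N^{1/2}\lesssim(\lambda L)^{1/2}$) while upgrading one $\|\cdot\|_{U^2_\Delta}$ to a $\|\cdot\|_{V^2_\Delta}$; three iterations yield the stated $\log^3(\langle\lambda L\rangle)$ prefactor in front of $C_2$. The $V^2_\Delta$ norms are then dominated by the $Y^0_{+,\mu}$ norms via Minkowski's inequality, since the $Y^0$ norm takes $\sup_\pi$ inside the $\ell^2$-sum over $k\in I_j$ while the $V^2_\Delta$ norm takes it outside. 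The conjugate statements follow identically with each $I_j$ replaced by $-I_j$. I expect the main bit of care to be the bookkeeping in Step 1 --- verifying that the $\mu^{3/2}$ Jacobian cancels exactly against the $\mu^{-3/2}$ from the three rescaled $L^2_x$ data norms --- so that the clean scaling-produced constant emerges and can be simplified via $s<\alpha/2$.
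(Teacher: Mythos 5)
Your proposal is correct and follows essentially the same route as the paper's proof: rescale $\lambda\mathbb{T}\times[0,\mu]$ to $(\lambda/\sqrt\mu)\mathbb{T}\times[0,1]$, apply Proposition~\ref{Proposition: Trilinear Free Estimate} and use $s<\alpha/2$ to absorb $\sqrt\mu/\lambda=(\lambda N)^{-\alpha/2}$ into $1/N$, obtain the lossy $U^p_\Delta$ ($p>2$) companion bound from the $L^6$ Strichartz estimate of Proposition~\ref{Proposition: Scaled Strichartz}, and then transfer and interpolate via Proposition~\ref{Proposition: Interpolation Properties} to land in $V^2_\Delta\subset Y^0$ with a $\log^3$ loss. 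Your explicit Jacobian bookkeeping and the final $V^2_\Delta\lesssim Y^0$ observation are just slightly more spelled-out versions of what the paper leaves implicit.
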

\begin{proof}
    The cornerstone of this analysis will be the estimates of Proposition \ref{Proposition: Trilinear Free Estimate}.

    Observe that it suffices to prove:
    \begin{align*}
        \|P_{I_1}u_1P_{I_2}u_2P_{I_3}\|_{L^2_{t,x}(\mu\mathbb{T}\times\lambda\mathbb{T})}&\lesssim \lambda^\beta \langle \lambda M\rangle^{0+}\prod_{j=1}^3\|P_{I_j}u_j\|_{U^p_\Delta}\\
        \|P_{I_1}e^{it\Delta}\phi_1P_{I_2}e^{it\Delta}\phi_2P_{I_3}e^{it\Delta}\phi_3\|_{L^2_{t,x}(\mu\mathbb{T}\times\lambda\mathbb{T})}&\lesssim \left(\frac{1}{N}+\frac{K}{\max(N_{13}, N_{23})}\right)^\frac{1}{2}\prod_{j=1}^3\|P_{I_j}\phi_j\|_{L^2_x(\lambda\mathbb{T})}
    \end{align*}
    for some $\beta > 0$ and $p > 2$ and all $\phi_1, \phi_2\in L^2(\lambda\mathbb{T})$. Indeed, the second bound above and the first portion of Proposition \ref{Proposition: Interpolation Properties} yields boundedness of the operator from $U^2_\Delta\times U^2_\Delta\times U^2_\Delta\to L^2$, while interpolation by the second portion of Proposition \ref{Proposition: Interpolation Properties} yields the desired claim.

    The first follows from the standard $L^6_{t,x}(\mathbb{T}\times\lambda\mathbb{T})$ with loss of Lemma \ref{Proposition: Scaled Strichartz}. 
    The second portion follows from a scaling argument and the estimate of Proposition \ref{Proposition: Trilinear Free Estimate}. Indeed, we observe that
    \begin{multline*} \|P_{I_1}e^{it\Delta}\phi_1P_{I_2}e^{it\Delta}\phi_2P_{I_3}e^{it\Delta}\phi_3\|_{L^2_{t,x}(\mu\mathbb{T}\times\lambda\mathbb{T})}\\
    = \|P_{\sqrt{\mu}I_1}e^{it\Delta}\phi_1^{\sqrt{\mu}^{-1}}P_{\sqrt{\mu}I_2}e^{it\Delta}\phi_2^{\sqrt{\mu}^{-1}}P_{\sqrt{\mu}I_3}e^{it\Delta}\phi_3^{\sqrt{\mu}^{-1}}\|_{L^2_{t,x}(\mathbb{T}\times\tfrac{\lambda}{\sqrt{\mu}}\mathbb{T})}\\
    \lesssim \left(\frac{\sqrt{\mu}}{\lambda} + \frac{K}{\max(N_{13}, N_{23})}\right)^\frac12\prod_{j=1}^3\|P_{\sqrt{\mu}I_j}\phi_j^{\sqrt{\mu}^{-1}}\|_{L^2_x(\tfrac{\lambda}{\sqrt{\mu}}\mathbb{T})}\\
    \lesssim \left(\frac{1}{(\lambda N)^\frac{\alpha}{2}}+\frac{K}{\max(N_{13}, N_{23})}\right)^\frac{1}{2}\prod_{j=1}^3\|P_{I_j}\phi_j\|_{L^2_x(\lambda\mathbb{T})},
    \end{multline*}
    where the penultimate step is an application of Lemma \ref{Proposition: Trilinear Free Estimate} after the observation that $K\mapsto \sqrt{\mu}K$ and $\max(N_{13}, N_{23})\mapsto \sqrt{\mu}\max(N_{13}, N_{23})$. We observe that
    \[
    (\lambda N)^\frac{\alpha}{2} = N^\frac{\alpha}{2s}\gtrsim N\qquad\qquad\frac{\alpha}{2} > s,
    \]
    and hence we recover the claimed result.
\end{proof}
\begin{remark}
    Proposition \ref{Lemma: trilinear Estimates} has a factor of $\lambda$ in it. However, we will take $\lambda \sim N^{\frac{1-s}{s}+}\gtrsim N$ for $s < \frac12$, so it suffices to replace $\lambda^{-1}$ with $N^{-1}$. Moreover, as $\log\lambda\sim \log N$ for fixed $s$, we will always be able to deal with this as $L^{0+}$ for $L\gtrsim N$.
\end{remark}
\begin{remark}
    It's possible that a more refined version of the number of lattice points in a scaled compact convex body could improve the $O(\tfrac{1}{\lambda})$ error bound that imposed the $s < \tfrac{\alpha}{2}$ constraint in the above. We are able to take $s > \tfrac{\alpha}{3}$, and hence there would be no gain in pursuing this here.
\end{remark}
The main use of Proposition \ref{Lemma: trilinear Estimates} will be in the $HLL$ and a subset of the $HHL$ (when have that both $N_{13}\sim N_{23}\sim N_{\max}\gg |I_1|$) cases. In these cases we can always obtain the result of the enhanced version, as when $|I_1|\sim |I_2|$ the base estimate holds, and when $|I_1|\ll |I_2|$ we have the conditions of Lemma \ref{Lemma: trilinear Estimates} with
\[
M\lesssim \frac{|I_1|(N_{\max}+|I_1|)}{N_{\max}}\lesssim |I_1|\ll |I_2|.
\]
On the other hand, there will be exactly $1$ case, $A_{3214b}$ of Lemma \ref{Lemma: M6 integral term}, in which we may not apply the enhanced version as above. In this case, we have that $N_{23}$ isn't quite large enough to fully yield $|I_1|/N_{\max}$. These observations are recorded in the next corollary.

\begin{corollary}\label{Corollary: Trilinear Strichartz Specific}
    Suppose that $s < \tfrac{\alpha}{2}$, $L\in 2^{\mathbb{N}_0}$, and $I_j\subset [-10L, 10L]$ for $j = 1, 2,3$ are intervals satisfying $|I_1|\lesssim |I_2|\lesssim |I_3|$. Suppose further that
    \[
    \mathrm{dist}(I_1, I_3)\sim \mathrm{dist}(I_2, I_3)\sim L,
    \]
    and that $|I_1|\ll L$. Then we have the following trilinear Strichartz estimate:
    \[
    \|P_{I_1}u_1P_{I_2}u_2P_{I_3}u_3\|_{L^2_{x,t}(\mu\mathbb{T}\times\lambda\mathbb{T})}\lesssim \log^3(\langle \lambda L\rangle)\left(\frac{1}{N}+\frac{|I_1|}{L}\right)^\frac{1}{2}\prod_{j=1}^3\|P_{I_j}u_j\|_{Y^0_{+,\mu}},
    \]
    with analogous results for conjugates so long as there are analogous statements with $-I_j$.
\end{corollary}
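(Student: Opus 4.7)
The plan is to deduce the corollary directly from Proposition \ref{Lemma: trilinear Estimates} by performing a case split on the relative size of $|I_1|$ and $|I_2|$, and verifying in each case that the parameter $K$ appearing in that proposition can be taken to be of size $\sim |I_1|$. Since the hypothesis forces $\max(N_{13}, N_{23}) \sim L$ and $|I_1| \ll L$, the ratio $K/\max(N_{13}, N_{23})$ will then be $\sim |I_1|/L$ in both cases, matching the claimed bound. The logarithmic prefactor and the $1/N$ term are carried through unchanged from Proposition \ref{Lemma: trilinear Estimates}, and the conjugate statement will follow from the symmetric version already built into that proposition.

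First I would handle the easy sub-case $|I_1| \sim |I_2|$. Here the base (non-enhanced) estimate of Proposition \ref{Lemma: trilinear Estimates} applies with $K = |I_2| \sim |I_1|$, since the hypothesis $K \ll \max(N_{13}, N_{23})$ is ensured by $|I_1| \ll L$. This immediately yields the claimed bound.

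Next, for the sub-case $|I_1| \ll |I_2|$, I would verify that condition \eqref{Condition: M condition} in Proposition \ref{Lemma: trilinear Estimates} holds. Because $I_1, I_3 \subset [-10L, 10L]$, the difference set satisfies $I_1 - I_3 \subset B(0, J)$ with $J \lesssim L$. Then
\[
M := \frac{|I_1|(J + |I_1|)}{N_{23}} \lesssim \frac{|I_1|(L + |I_1|)}{L} \lesssim |I_1|,
\]
using $|I_1| \ll L \sim N_{23}$. Thus $M \ll |I_2|$ (by the standing assumption of this sub-case) and $M \ll N_{23} \sim L$, so the enhanced version of Proposition \ref{Lemma: trilinear Estimates} applies with $K = M \vee |I_1| \sim |I_1|$, again giving the claimed bound.

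There is no real obstacle here beyond careful bookkeeping of the constants in condition \eqref{Condition: M condition}; the substance of the estimate is entirely contained in Proposition \ref{Lemma: trilinear Estimates}. The only mild subtlety worth noting is that one cannot immediately invoke the enhanced estimate in the sub-case $|I_1| \sim |I_2|$ because the strict inequality $M \ll |I_2|$ may fail, which is precisely why the case split is needed; the base estimate handles that regime without loss.
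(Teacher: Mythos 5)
Your proposal is correct and follows essentially the same route as the paper's own proof: the same case split on $|I_1| \sim |I_2|$ versus $|I_1| \ll |I_2|$, applying the base and enhanced versions of Proposition~\ref{Lemma: trilinear Estimates} respectively, with the same computation $M \lesssim |I_1|(L+|I_1|)/L \lesssim |I_1| \ll |I_2|$ to verify condition~\eqref{Condition: M condition}. One small refinement to your closing remark: the case split is forced not only because $M \ll |I_2|$ may fail when $|I_1| \sim |I_2|$, but more fundamentally because the enhanced estimate in Proposition~\ref{Proposition: Trilinear Free Estimate} explicitly lists $|I_1| \ll |I_2|$ among its hypotheses.
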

\begin{proof}
    This follows by the above. In particular, if $|I_1|\sim |I_2|$ then we may apply the base estimate. Suppose now that $|I_1|\ll |I_2|$ and $\mathrm{dist}(I_1, I_3)\sim \mathrm{dist}(I_2, I_3)\sim L$, then $I_1-I_3\subset B(0, 20L)$ and
    \[
M \lesssim \frac{|I_1|(L+|I_1|)}{L}\lesssim |I_1| \ll |I_2|.
    \]
    An application of the enhanced version of the trilinear estimate yields the desired result.
\end{proof}

\section{Modified Energy, Pointwise Estimates, \& Proof of Proposition \ref{Proposition: I method bounds}}\label{Section: I-method Setup}
With the prior lemmas out of the way, we turn to the construction of the modified energy described in Proposition \ref{Proposition: I method bounds}. Due to conservation of energy for \eqref{Equation: Quintic NLS}, we may restrict all of the multilinear operators to:
\[
\Upsilon_j = \{(k_1,\dots,k_j)\in\Gamma_j\,:\,|k_1^*|\sim|k_2^*|\gtrsim N\},
\]
where we necessarily have cancellation in the low frequency component.

They key theme in what is to follow is that we seek to perform integration-by-parts to exploit large phases. However, the phases under consideration are of of the form:
\begin{equation}\label{Definition: alpha}
\Omega_n = i\sum_{j=1}^n(-1)^jk_j^2,
\end{equation}
which have non-trivial \textit{resonances}. Even for just $n = 6$ and $k_j\in\mathbb{Z}$, the set where $\alpha_n = 0$ or is simply small is hard to accurately describe; which is made even worse on $\tfrac{1}{\lambda}\mathbb{Z}$. At the end of the construction we will need to determine a pointwise bound for the quotient
\begin{equation}\label{Equation: Quotient to estimate}
\frac{m^2(k_1)k_1^2-m^2(k_2)k_2^2+m^2(k_3)k_3^2 - m^2(k_4)k_4^2+m^2(k_5)k_5^2-m^2(k_6)k_6^2}{k_1^2-k_2^2+k_3^2-k_4^2+k_5^2-k_6^2},
\end{equation}
where we will not only desire this to be relatively small, but we will desire finer bounds for this quantity than in \cite{li2011global,schippa2024improved}; that is, small gains alleviate the necessity of $\frac13 > s > 0$. 

To overcome this issue, we define the collection of unacceptable frequency configurations as in \cite{schippa2024improved}. It is worth noting that this resonance set is nearly the same as in \cite{li2011global}, merely with the language modified.
\begin{definition}\label{Definition: Resonant Set}
    Let $N_j\in2^{\mathbb{N}_0}$ and $(k_1,\dots,k_6)\in\Upsilon_6$ satisfying $|k_j|\sim N_j$. Moreover, assume that $N_1\geq N_3\geq N_5$ and $N_2\geq N_4\geq N_6$, and that
    \[
    N_1\sim N_2,\quad\quad N_3^*\sim N_4^*.
    \]
    We then define the Resonance, $\mathcal{R}$, as one of the following holding:
    \begin{enumerate}[label = (\roman*)]
        \item $N_1^*\sim N_2^*\gg N_3^*\sim N_4^*$, $k_1\cdot k_2 < 0$, and $|k_1+k_2|\ll\frac{N_3^*}{N_1^2}$,
        \item $N_1^*\sim N_4^*\gg N_5^*$ and one of the following is true:
        \begin{enumerate}
            \item $\{N_1^*,\dots,N_4^*\} = \{N_1,N_2,N_3,N_4\}$.
            \item $\{N_1^*,\dots,N_4^*\} = \{N_1,N_2,N_4,N_6\}$, and $k_2,k_4,k_6$ are not of the same sign. Moreover, if $k_i$ and $k_1$ for $i\in\{2,4,6\}$ are of the same sign, then $|k_1-k_i|\sim N_1^*$. If $k_i$ and $k_1$ differ in sign, then $|k_1+k_i|\sim N_1^*$.
            \item $\{N_1^*,\dots,N_4^*\} = \{N_1,N_2,N_3,N_5\}$, and $k_1,k_3,k_5$ are not of the same sign. Moreover, if $k_i$ and $k_2$ for $i\in\{2,4,6\}$ are of the same sign, then $|k_2-k_i|\sim N_1^*$. If $k_i$ and $k_2$ differ in sign, then $|k_2+k_i|\sim N_1^*$.
        \end{enumerate}
        \item $N_1^*\sim N_5^*$.
    \end{enumerate}
\end{definition}
\begin{remark}
    When off of $\mathcal{R}$, we can estimate \eqref{Equation: Quotient to estimate} by utilizing the large denominator, which is the content of Lemma \ref{Lemma: spatial symbol size estimate}.
\end{remark}

With the definition of resonance out of the way, we can now proceed with the construction of the modified energies. For any symbol $M_n$, we recall the definition of $\Lambda_n$ defined by \cite{de2007global} as:
\[
\Lambda_n(M_n\,:\,f_1,\dots, f_n) = \int_{\Gamma_n} M_n(k_1,\dots, k_n)\prod_{j=1}^n\widehat{f}_j(t,k_j)(d\Gamma_n)_\lambda.
\]
If $u$ satisfies \eqref{Equation: Quintic NLS}, then we find
\[
\partial_t\Lambda_n(M_n;u,\dots, u) = \Lambda_n(M_n\Omega_n) + i\Lambda_{n+4}\left(\sum_{j=1}^n(-1)^jX_j(M_n);u,\dots,u\right),
\]
for $\Omega_n$ as in \eqref{Definition: alpha} and $X_j(M_n)$ the \textit{elongation} of $M_n$ in the $j'$th coordinate:
\[
X_j(M_n) = M_n(k_1,\dots, k_{j-1},k_j+\dots +k_{j+5},k_{j+6},\dots, k_{n+4}).
\]
We then define the first energy to be 
\[
E^1_I(u)(t) = E(Iu)(t) = \frac{1}{2}\|Iu(t)\|_{\dot{H}^1}^2\pm\frac{1}{6}\|Iu(t)\|_{L^6}^6 = \Lambda_2(\sigma_2)+ \Lambda_6(\sigma_6),
\]
where
\begin{align*}
    \sigma_2 &= -\frac{1}{2}m(k_1)k_1m(k_2)k_2,\quad\mbox{and}\\
    \sigma_6 &= \pm\frac{1}{6}m(k_1)m(k_2)m(k_3)m(k_4)m(k_5)m(k_6).
\end{align*}
We now define $M_6$, via
\[
M_6 =  \frac{i}{6}\sum_{j=1}^6(-1)^{j+1}m^2(k_j)k_j^2 + \Lambda_6(\sigma_6\Omega_6) =: M^1_6 + M^2_6,
\]
and observe that $M_6 = M_6\chi_{\Upsilon_6}$, so that we are free to assume that all of the multipliers come with a restriction to $\Upsilon_6$. We now introduce the resonant set into the mix, by defining 
\[
\overline{M}_6 := \sum_{\substack{I_1,\dots, I_6\\I_i\subset A_{N_i}\\(\mathcal{R})\,\mathrm{ holds}}}M^1_6(k_1,\dots,k_6)\chi_{I_1}(k_1)\dots\chi_{I_6}(k_6),
\]
where $|I_i| \sim N_i$, unless a finer decomposition can be admitted in accordance with Lemma \ref{Lemma: M6 estimate}. This then induces the portion of $M_6$ that we can remove via:
\[
\tilde{M}_6 := M_6 - \overline{M}_6,
\]
which contains all of $M_6^2 = \Lambda_6(\sigma_6\Omega_6)$ as well the \textit{nonresonant} portion of $M_6^1$. The resonant set was constructed so that we may sensibly bound and define $\tilde{\sigma}_6$ and the second modified energy as:
\begin{align*}
    \tilde{\sigma_6} &:= \frac{\tilde{M}_6}{\Omega_6}\\
    E^2_I(u)(t) &:= E^1_I(u)(t) - \Lambda_6(\tilde{\sigma}_6).
\end{align*}

The use in these definitions arises after differentiation in time, where we find major cancellation within the above:
\begin{align*}
\frac{d}{dt}E^2_I(u)(t) &= \frac{d}{dt}E^1_I(u)(t) - \frac{d}{dt}\Lambda_6(\tilde{\sigma}_6)\\
&=\Lambda_6\left(\frac{i}{6}\sum_{j=1}^6(-1)^{j+1}m^2(k_j)k_j^2\right) + \Lambda_6(\sigma_6\Omega_6) + i\Lambda_{10}\left(\sum_{j=1}^6(-1)^{j}X_j(\sigma_6)\right)\\
&\qquad\qquad - \Lambda_6(\tilde{M}_6) - i\Lambda_{10}\left(\sum_{j=1}^6(-1)^jX_j(\tilde{\sigma}_6)\right)\\
&= \Lambda_6(\overline{M}_6)+ i\Lambda_{10}\left(\sum_{j=1}^6(-1)^j(X_j(\sigma_6)-X_j(\tilde{\sigma}_6))\right).
\end{align*}
We may now apply the Fundamental Theorem of Calculus, which allows us to obtain a bound on the increment of the energy in terms of the above multilinear operators:
\begin{multline}
    E^1_I(u)(t) = E^1_I(u)(0) + \Lambda_6(\tilde{\sigma}_6)\big|_{s=0}^t\\
    + \int_0^t \Lambda_6(\overline{M}_6)(s)
    + i\Lambda_{10}\left(\sum_{j=1}^6(-1)^j(X_j(\sigma_6)-X_j(\tilde{\sigma}_6))\right)(s)\,ds,
\end{multline}
where we designate 
\[
M_{10} = \sum_{j=1}^6(-1)^j(X_j(\sigma_6)-X_j(\tilde{\sigma}_6)).
\]
This is the exact expression given by equation \eqref{Equation: I method FTC}, so we are now in a position to prove the individual bounds associated to Proposition \ref{Proposition: I method bounds}.

\subsection{Pointwise Symbol Estimates}\label{Section: Pointwise Symbol Estimates}
In this section we prove pointwise symbol estimates associated to the multilinear operators within equation \eqref{Equation: I method FTC}. For convenience, we redefine 
\[
\Omega_n(k_1,\dots, k_n)\mapsto i\Omega_n(k_1,\dots, k_n) = \sum_{j=1}^n(-1)^{j+1}k_j,
\]
as there will be no confusion.

We now record a proposition that will enable us to easily use regularity and pointwise bounds to estimate multilinear operators in our spaces. This proposition is standard, and is the content of \cite[Proposition 5.4]{schippa2024improved}.
\begin{proposition}\label{Proposition: Regularity}
    Let $I_1,\dots, I_6\subset \mathbb{R}$ denote intervals such that $|I_j| = L_j\in 2^\mathbb{Z}$. Let $M:\mathbb{Z}^6\to\mathbb{C}$, 
    \[
    M_I(k_1,\cdots, k_6) = M(k_1,\dots, k_6)\prod_{j=1}^6\chi_{I_j}(k_j),
    \]
    and suppose that there is an extension of $M_I$, $\overline{M}_I$, on $\mathbb{R}^6$ and $\cal{M}(I_1,\dots, I_6)>0$ with:
    \begin{equation}\label{Equation: Regularity}
        |\partial_{\ell_j}^\alpha \overline{M}_I(\ell_1,\dots,\ell_6)|\lesssim \frac{\cal{M}(I_1,\dots,I_6)}{L^\alpha_j}
    \end{equation}
    for $0 \leq \alpha \leq 15$ and $1\leq j\leq 6$.

    Then we have the Fourier series expansion:
    \begin{equation}
        \overline{M}_I(k_1,\dots, k_6) = \frac{1}{L_1\cdots L_6}\sum_{\xi_1,\dots, \xi_6\in\mathbb{Z}}m(\xi_1,\dots,\xi_6)\prod_{j=1}^6e^{i\frac{k_j\xi_j}{L_j}}
    \end{equation}
    whose Fourier coefficients are given by
    \begin{equation}
        m(\tfrac{\xi_1}{L_1},\dots,\tfrac{\xi_6}{L_6}) = \int_{\ell_j\in [0,L_j]}\overline{M}(\ell_1,\dots,\ell_6)\prod_{j=1}^6e^{-i\frac{\xi_j\ell_j}{L_j}}d\ell_1\dots d\ell_6.
    \end{equation}
    Moreover, 
    \begin{equation}
        |m(\tfrac{\xi_1}{L_1},\cdots,\tfrac{\xi_6}{L_6})|\lesssim L_1\dots L_6 \cal{M}(I_1,\dots, I_6)\langle \xi\rangle^{-15}.
    \end{equation}
\end{proposition}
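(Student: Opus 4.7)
The plan is to establish the Fourier expansion and coefficient decay by standard Fourier-analytic techniques on a box, with the decay obtained via repeated integration by parts against the regularity hypothesis~\eqref{Equation: Regularity}.

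First I would reduce to a clean setting. By translation invariance of the symbol's structure, assume without loss of generality that each $I_j=[0,L_j]$, so that the box of integration is $B := \prod_{j=1}^{6}[0,L_j]$ and the formula given for $m(\xi_1/L_1,\ldots,\xi_6/L_6)$ is precisely the Fourier coefficient of $\overline{M}_I|_B$ in the orthogonal basis $\{\prod_j e^{i k_j\xi_j/L_j}\}_{\xi\in \mathbb{Z}^6}$. To make the series converge to $\overline{M}_I$ on the lattice points of interest, I would replace the extension by a smoothly periodic one: multiply $\overline{M}_I$ by a product of smooth cutoffs $\prod_j \phi_j(\ell_j)$, each $\phi_j$ equal to $1$ on $I_j$ with derivatives bounded by $|\phi_j^{(\alpha)}|\lesssim L_j^{-\alpha}$, then periodize. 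By the Leibniz rule the new extension still obeys~\eqref{Equation: Regularity} (with a dimensionless loss absorbed in $\lesssim$), so WLOG we may assume $\overline{M}_I$ is smooth and $L_j$-periodic in each $\ell_j$. This is the step I expect to be the most delicate, since one must verify that the cutoff does not spoil the regularity scaling and that the resulting Fourier series recovers the correct values on the lattice $\prod_j (I_j\cap \tfrac{1}{\lambda}\mathbb{Z})$; everything afterwards is a mechanical computation.

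Next, for each fixed $j\in\{1,\ldots,6\}$ I would integrate by parts fifteen times in the $\ell_j$-variable inside the coefficient formula. Because the extension is $L_j$-periodic in $\ell_j$, no boundary terms appear, and each integration yields the factor $L_j/(i\xi_j)$ from differentiating the exponential. After $15$ iterations, the integrand carries $\partial_{\ell_j}^{15}\overline{M}_I$, which by hypothesis is bounded by $\mathcal{M}(I_1,\ldots,I_6)/L_j^{15}$. Integrating over the box of volume $L_1\cdots L_6$ then gives the pointwise bound
\[
|m(\xi_1/L_1,\ldots,\xi_6/L_6)|
\;\lesssim\; L_1\cdots L_6 \,\mathcal{M}(I_1,\ldots,I_6)\,|\xi_j|^{-15},
\]
valid for $\xi_j\neq 0$. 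The straightforward bound $|m|\lesssim L_1\cdots L_6\,\mathcal{M}$ handles the case $\xi=0$.

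Finally, to pass from per-coordinate decay to the full $\langle\xi\rangle^{-15}$ bound, I would take the minimum over $j$, using that $\max_j|\xi_j|\geq 6^{-1/2}\langle\xi\rangle$, to conclude
\[
|m(\xi_1/L_1,\ldots,\xi_6/L_6)|
\;\lesssim\; L_1\cdots L_6\,\mathcal{M}(I_1,\ldots,I_6)\,\langle\xi\rangle^{-15}.
\]
Substituting this decay into the formal expansion and invoking the dominated convergence / uniform summability ensured by $\langle\xi\rangle^{-15}$ confirms the stated Fourier series identity.
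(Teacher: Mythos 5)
Your overall approach --- smooth periodization followed by repeated integration by parts, then converting per-coordinate decay to $\langle\xi\rangle^{-15}$ decay --- is the standard route, and it is the one behind the cited source (the paper does not reprove this; it cites \cite[Proposition~5.4]{schippa2024improved}). The $15$-fold integration by parts in each variable and the observation $\max_j|\xi_j|\gtrsim\langle\xi\rangle$ are both fine.

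The gap is in the periodization step, which you rightly flagged as the delicate one but did not actually close. A smooth cutoff $\phi_j$ equal to $1$ on all of $I_j$ with $|\phi_j^{(\alpha)}|\lesssim L_j^{-\alpha}$ must be supported on an interval of length $(1+c)L_j$ for some fixed $c>0$. Periodizing $\phi_j\overline{M}_I$ with period exactly $L_j$ therefore causes adjacent translates to overlap on all of $I_j$, so the periodized function no longer agrees with $\overline{M}_I$ there, and the Fourier series would not reconstruct $M_I$ on the lattice $\prod_j(I_j\cap\tfrac{1}{\lambda}\mathbb{Z})$. You also cannot shrink $\mathrm{supp}\,\phi_j$ inside $I_j$: the cutoff would then have to drop from $1$ to $0$ over a distance $\sim 1/\lambda$ near the boundary lattice points, forcing $|\phi_j^{(\alpha)}|\sim\lambda^{\alpha}\gg L_j^{-\alpha}$ and destroying \eqref{Equation: Regularity}. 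The correct fix is to periodize with a strictly larger period --- say $2L_j$ (or $10L_j$) --- and correspondingly replace the frequencies $\xi_j/L_j$ by $\xi_j/(2L_j)$ in both the exponentials and the coefficient formula; with this adjustment the cutoff's support fits into one period, the series recovers $M_I$ on the lattice, and the boundary terms in the $15$-fold integration by parts vanish. This rescaling only changes the translation lengths appearing in the downstream application (Equation~\eqref{Equation: 10 linear reduction firs step}) by an absolute constant, so all estimates survive unaffected. With that modification your argument is complete.
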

The above proposition, combined with the translation invariance of Strichartz estimates, allows us to estimate multilinear operators with symbol $M$ via dyadic decomposition and the pointwise estimates of $M$, see \cite[Remark 5.5]{schippa2024improved}.

The next lemma provides the base estimates for the resonant operator $\overline{M}_6$, which is only an improvement in the addition of certain factors of $m$ in cases $(iii)$ and $(iv)$.
\begin{lemma}\label{Lemma: M6 estimate}
    Let $I_1\dots, I_6$ denote intervals of length $L_i\in 2^\mathbb{Z}$. The multiplier 
    \[
    \overline{M}_6(k_1,\dots, k_6)\chi_{I_1}(k_i)\cdots\chi_{I_6}(k_6)
    \]
    defined on $\Gamma_6$ extends to $\mathbb{R}^6$ and satisfies size and regularity assumptions under the following conditions on $I_1,\dots I_6$.
    \begin{enumerate}[label = (\roman*)]
        \item For $|k_i|\sim N_i\in 2^{\mathbb{N}_0}$ and $N_1^*\sim N_2^*$ we have the size estimate and regularity estimates \eqref{Equation: Regularity} with 
        \begin{equation}\label{equation: M bound 1}
            |\cal{M}(I_1,\dots,I_6)|\lesssim m(N_1^*)N_1^*m(N_3^*)N_3^*.
        \end{equation}
        The same holds if $N_1^*\gg N_3^*$ and $|I_1|\sim |I_2|\sim N_3^*$.
        \item If $N_1^*\sim |k_1|\sim |k_2|\gtrsim N\gg N_3^*\sim N_4^*$, $|I_1|\sim |I_2|\lesssim \tfrac{(N_3^*)^2}{N_1^*}$, and $|k_1+k_2|\lesssim \tfrac{(N_3^*)^2}{N_1^*}$ for $k_j\in I_j$, then we have the size and regularity estimate with
        \begin{equation}\label{Equation: M bound 2}
            |\cal{M}(I_1,\dots, I_6)|\lesssim (N_3^*)^2.
        \end{equation}
        \item If $|I_j|\lesssim N_5^*$ for $j=1,2,3,4$, and $\max(|k_1+k_2|,|k_3+k_4|)\lesssim N_5^*$, then we have the size and regularity estimate with
        \begin{equation}\label{Equation: M bound 3}
            |\cal{M}(I_1,\dots, I_6)|\lesssim m(N_1^*)N_1^*m(N_5^*)N_5^*.
        \end{equation}
        \item If $|I_j|\lesssim N_{12}$ for $j = 1,2,3,4$, and $N_1\gtrsim N_{12}\sim |k_1+k_2|\sim |k_3+k_4|\gg |k_5^*|$, then the size and regularity estimate hold with
        \begin{equation}\label{Equation: M bound 4}
            |\cal{M}(I_1,\dots, I_6)\lesssim m(N_1^*)N_1^*m(N_{12})N_{12}.
        \end{equation}
    \end{enumerate}
\end{lemma}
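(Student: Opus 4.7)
The key analytic input is that $f(k) := m^2(k) k^2$ is smooth and even in $k$, with $|\partial_k^\alpha f(k)| \lesssim m^2(|k|)|k|^{2-\alpha}$ for $0 \le \alpha \le 15$, so that whenever $k_a, k_b$ have opposite signs and $|k_a|,|k_b|\sim N$ the mean value theorem gives
\[
|f(k_a) - f(k_b)| = |f(k_a) - f(-k_b)| \lesssim |k_a + k_b|\, m^2(N)\, N.
\]
The strategy is to recognize that each bound on $\mathcal{M}$ in the lemma arises from pairing adjacent alternating terms in $M_6^1 = \tfrac{i}{6}\sum_j (-1)^{j+1} f(k_j)$ and exploiting a near-cancellation forced either by the constraint $\sum k_j = 0$ on $\Gamma_6$ or by the resonance hypothesis present in the particular sub-case, while the residual terms are controlled by the monotonicity of $k \mapsto m(k) k$.

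Case-by-case, the plan is as follows. For case (i) with $N_1^* \sim N_2^*$, we pair $k_1^*$ and $k_2^*$; the constraint $\sum k_j = 0$ forces $|k_1^* + k_2^*| \lesssim N_3^*$, so MVT produces a contribution $\lesssim N_3^* m^2(N_1^*) N_1^* \lesssim m(N_1^*) N_1^* \cdot m(N_3^*) N_3^*$ using the monotonicity $m(N_1^*) \le m(N_3^*)$, while each of the four remaining terms satisfies $(m(k_j^*)|k_j^*|)^2 \lesssim m(N_1^*) N_1^*\, m(N_3^*) N_3^*$ since $k \mapsto m(k)k$ is non-decreasing and $|k_j^*| \lesssim N_3^*$ for $j \ge 3$; the second sub-case is identical at the level of size. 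Case (ii) repeats this argument using the sharper hypothesis $|k_1+k_2| \lesssim (N_3^*)^2/N_1^*$, giving $\lesssim (N_3^*)^2 m^2(N_1^*) \le (N_3^*)^2$, with the lower-frequency terms individually $\lesssim (N_3^*)^2$. Cases (iii) and (iv) exploit two simultaneous near-cancellations, pairing $(k_1, k_2)$ and $(k_3, k_4)$ at scale $N_5^*$ or $N_{12}$ respectively, with the singleton pair $(k_5, k_6)$ bounded trivially by $(m(k_5^*) k_5^*)^2$; the three contributions are matched against $\mathcal{M}$ using the same monotonicity.

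For the regularity estimates $|\partial^\alpha_{\ell_j} \overline{M}_I| \lesssim \mathcal{M}/L_j^\alpha$ demanded by Proposition~\ref{Proposition: Regularity}, we take the natural smooth extension of $M^1_6$ against smoothly cut-off indicators of $I_1\times\cdots\times I_6$. For a coordinate $\ell_j$ \emph{not} involved in a cancellation pair, the bound $|\partial^\alpha_{\ell_j}[m^2(\ell_j)\ell_j^2]|\lesssim m^2(N_j) N_j^{2-\alpha}$ combined with $L_j \le N_j$ reduces the estimate to the size bound already obtained. For a coordinate \emph{inside} a cancellation pair, the interval-width hypotheses built into each sub-case---namely $|I_1|\sim|I_2|\sim N_3^*$ in case (i), $|I_1|\sim|I_2|\lesssim (N_3^*)^2/N_1^*$ in case (ii), $|I_j|\lesssim N_5^*$ in case (iii), and $|I_j|\lesssim N_{12}$ in case (iv)---are precisely what one needs so that the cancellation scale $|k_a+k_b|$ remains of the prescribed size \emph{throughout the extended box}, not just on the hyperplane $\Gamma_6$. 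Once this is arranged, applying the product rule to a factored expression of the form $(k_a+k_b)\cdot[\text{smooth}]$ passes the derivative onto the smooth factor and recovers the desired $\mathcal{M} L_j^{-\alpha}$.

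The main obstacle is bookkeeping: one must verify that the inequality $\mathcal{M} \ge L_j^\alpha \cdot \sup|\partial_{\ell_j}^\alpha \overline{M}_I|$ closes in every sub-configuration using only elementary monotonicity of $m$ and $m(k) k$ along with the structural hypotheses on the $N_i^*$'s and the $|I_j|$'s. The lemma's case distinctions are designed precisely so that the cancellation gain survives extension off $\Gamma_6$, so the proof is largely mechanical but requires careful tracking of the frequency regimes and the interval widths to confirm that each claimed $\mathcal{M}$ is the correct post-cancellation size.
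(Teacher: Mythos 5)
Your proposal captures the correct mechanism---near-cancellation in each alternating pair, controlled via the monotonicity of $m$ and $m(k)k$---and the mean-value-theorem identity $|f(k_a)-f(-k_b)|\lesssim |k_a+k_b|\,m^2(N)N$ for $f(k)=m^2(k)k^2$ is precisely equivalent to the paper's ``basic estimate'' $|m^2(k_1)k_1^2-m^2(k_2)k_2^2|\lesssim m^2(N_1^*)|k_1^2-k_2^2|$ (equation \eqref{Equation: symbol start}). Note however that the paper does not reprove (i) and (ii); it cites \cite{schippa2024improved} and only writes out (iii) and (iv). Your case-by-case plan for the size bound in all four cases is nonetheless sound, and the arithmetic you describe (pair the first two differences, bound each contribution by $\mathcal{M}$ via $m$ decreasing and $m(k)k$ increasing) matches the paper's computation for (iii) verbatim.

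The genuine divergence is in the regularity estimate, and your route there is both more complicated than necessary and not fully verified. You propose to factor $f(k_a)-f(k_b)=(k_a+k_b)\cdot g(k_a,k_b)$ and apply the product rule, ``passing the derivative onto the smooth factor.'' But the product rule produces \emph{two} contributions: $(k_a+k_b)\,\partial^\alpha g$, which behaves as you describe, \emph{and} the cross term $\alpha\,\partial^{\alpha-1}g$, whose size is $\sim m^2(N_i)N_i^{2-\alpha}$---i.e., exactly the trivial termwise derivative bound with \emph{no} gain from the cancellation. You do not address why this term is acceptable. In fact it is, and the paper's argument explains why without any factoring at all: for $\alpha\geq 1$ one simply takes the obvious extension, uses the termwise bound $|\partial^\alpha_{x_i}\overline{M}_6|\lesssim m^2(x_i)x_i^2/\langle x_i\rangle^\alpha$, and observes that the localization $L_i\lesssim N_5^*$ (resp.\ $N_{12}$) for $i\in\{1,2,3,4\}$, combined with $\langle x_i\rangle\gtrsim N_5^*$ and the two monotonicities, already forces $m^2(x_i)x_i^2/\langle x_i\rangle^\alpha\lesssim \mathcal{M}/L_i^\alpha$. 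The cancellation is only needed at $\alpha=0$, where the interval-width hypotheses guarantee $|x_1+x_2|,|x_3+x_4|\lesssim N_5^*$ (resp.\ $\sim N_{12}$) throughout the box $I_1\times\cdots\times I_6$, not merely on $\Gamma_6$---a point you do correctly identify. So your observation about why the interval hypotheses are designed as they are is right, but the factoring detour is unnecessary and, as written, leaves a gap at the cross term; closing that gap collapses your argument into the paper's more direct one.
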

\begin{proof}
    The statements, and hence the proofs, for $(i)$ and $(ii)$ are the same as in \cite{schippa2024improved}. What remains is $(iii)$ and $(iv)$.

    Before proceeding with the cases, we note the basic estimate that will be utilized:
    \begin{align}
        &|m^2(k_1)k_1^2-m^2(k_2)k_2^2 + \cdots + m^2(k_5)k_5^2-m^2(k_6)k_6^2|\nonumber\\
        &\qquad\lesssim |m^2(k_1)k_1^2 - m^2(k_2)k_2^2| + |m^2(k_3)k_3^2 - m^2(k_4)k_4^2| + |m^2(k_5)k_5^2-m^2(k_6)k_6^2|\nonumber\\
        &\qquad\lesssim m^2(N_1^*)|k_1^2-k_2^2| + m^2(N_3^*)|k_3^2-k_4^2| + m^2(N_5^*)|k_5^2-k_6^2|\label{Equation: symbol start}
    \end{align}

    \textbf{Case (iii):} The proof for this is almost the exact same, but we modify it to take advantage of the extra factor of $m$. We have that $|k_1+k_2|,|k_3+k_4|\lesssim N_5^*$, so
    \[
        \eqref{Equation: symbol start}\lesssim m^2(N_1^*)N_1^*N_5^* + m^2(N_3^*)N_3^*N_5^* + m^2(N_5^*)(N_5^*)^2\\
        \lesssim m(N_1^*)N_1^*m(N_5^*)N_5^*,
    \]
    where we've used both the decreasing nature of $m$ and the increasing nature of $m(|k|)|k|$. This establishes the bound, but we now need to show that it satisfies the regularity property of \eqref{Equation: Regularity}. To that end, we observe that we have the standard derivative estimate for the obvious extension of $\overline{M}_6$
    \[
    |\partial^\alpha_{x_i} \overline{M}_6(x_1, \dots x_6)|\lesssim \frac{m(x_i)^2x_i^2}{\langle x_i\rangle^\alpha},
    \]
    so that if $|x_i|\sim N_i\ll N_5^*$ then we may use the increasing nature of $m(|x|)|x|$ to conclude the desired result. On the other hand, if $|x_i|\sim N_i \gtrsim N_5^*$ then we're localizing to intervals of size $N_5^*$ and it suffices to show that
    \[
    \frac{m^2(x_i)x_i^2(N_5^*)^\alpha}{\langle x_i\rangle^{\alpha}}\lesssim 1,
    \]
    but we may again use the increasing nature of $m(|x|)|x|$ together with the decreasing nature of $m(|x|)$ to reduce us to having to show
    \[
    \frac{|x_i|(N_5^*)^{\alpha-1}}{\langle x_i\rangle^{\alpha}}\lesssim 1.
    \]
    This, however, is immediate from the fact that $\langle x_i\rangle \gtrsim N_5^*$.

    \textbf{Case (iv):} This follows from the same work as above after noting that we may not have $N_3^*\ll N_{12}$, and hence $m(N_3^*)\lesssim m(N_{12})$.
\end{proof}
\begin{remark}\label{Remark: N12 M term}
    The factor of $m(N_{12})$ in \eqref{Equation: M bound 4} is necessary to extending Theorem \ref{Theorem: GWP theorem} to $s < \tfrac{1}{3}$. In fact, this allows us to conclude that
    \[
    \frac{N^s}{N_1^s}\sum_{N_{12}\lesssim N_1}\frac{m(N_{12})N_{12}}{N} \lesssim 1 + \frac{N^s}{N_1^s}\sum_{N\ll N_{12}\lesssim N_1}N^{-s}N_{12}^s\lesssim 1,
    \]
    for some $N_1\gtrsim N$. This means we may utilize another factor of $N_1^{-s}$ to cancel this factor, and the above factor of $N^{-s}$ to cancel the $N^s$ factor that is now without a partner. 
\end{remark}
The next lemma is elementary and allows us to handle the regularity of terms involving $\Omega_p$ for some $p$.
\begin{lemma}\label{Lemma: Omega derivative}
    Let $p  > 3$ even,  $\Omega_p(\xi_1,\dots,\xi_p) = \sum_{j=1}^p (-1)^{j+1}\xi_j^2$, and $\tilde{\Omega}_p$ any polynomial extension of $\Omega_p$ to $\mathbb{R}^p$. Then for any $\alpha \geq 0$ and $1\leq i\leq p$ we have \begin{equation}
        \partial_{\xi_i}^\alpha\tilde{\Omega}_p^{-1} = \sum_{j}c_j\frac{(\partial_{\xi_i}\tilde{\Omega}_p)^{\gamma_j}}{\tilde{\Omega}_p^{\beta_j}},
    \end{equation}
    where $2\beta_j - \gamma_j \geq \alpha + 2$ and $\beta_j \geq \gamma_j + 1$. Furthermore, if $\xi_i\in I_i$ with $|I_i|\sim L_i$ and $|\xi_i|\sim N_i$, then we have the estimate $|\partial^\alpha_{\xi_i} \tilde{\Omega}^{-1}_p|\lesssim |\tilde{\Omega}_p|^{-1}L_i^{-\alpha}$ so long as:
    \begin{equation}\label{Equation: Regularity Omega desired est}
        |\partial_{\xi_i}\tilde{\Omega}_p|^{\gamma_j}L^{\alpha}_i\lesssim |\tilde{\Omega}_p|^{\beta_j-1},
    \end{equation}
    for all $\gamma_j,\beta_j$ as above.
\end{lemma}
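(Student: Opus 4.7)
The plan is to prove the algebraic identity by induction on $\alpha$, leveraging the key structural fact that $\tilde{\Omega}_p$ is a degree-$2$ polynomial in each coordinate $\xi_i$ separately—explicitly $\tilde{\Omega}_p = (-1)^{i+1}\xi_i^2 + R(\xi_{\neq i})$—so that $\partial_{\xi_i}^2 \tilde{\Omega}_p = \pm 2$ is a constant. For the base case $\alpha = 0$, the single term $(\gamma_1,\beta_1) = (0,1)$ tautologically satisfies both $2\beta_1 - \gamma_1 = 2 \geq 0+2$ and $\beta_1 \geq \gamma_1 + 1$.

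For the inductive step, I would differentiate a generic summand using the quotient and chain rules to obtain
\begin{equation*}
\partial_{\xi_i}\!\left(\frac{(\partial_{\xi_i}\tilde{\Omega}_p)^{\gamma_j}}{\tilde{\Omega}_p^{\beta_j}}\right) = \gamma_j\,(\partial_{\xi_i}^2\tilde{\Omega}_p)\,\frac{(\partial_{\xi_i}\tilde{\Omega}_p)^{\gamma_j-1}}{\tilde{\Omega}_p^{\beta_j}} - \beta_j\,\frac{(\partial_{\xi_i}\tilde{\Omega}_p)^{\gamma_j+1}}{\tilde{\Omega}_p^{\beta_j+1}}.
\end{equation*}
Because $\partial_{\xi_i}^2\tilde{\Omega}_p$ is constant, the first term remains of the claimed form with shifted indices $(\gamma_j-1,\beta_j)$, while the second produces $(\gamma_j+1,\beta_j+1)$. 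In both cases a direct check shows that $2\beta_j - \gamma_j$ increases by $1$, giving $2\beta'_j - \gamma'_j \geq (\alpha+1)+2$, and that $\beta_j \geq \gamma_j+1$ is preserved. The branch $(\gamma_j-1,\beta_j)$ with $\gamma_j = 0$ carries a factor $\gamma_j = 0$ and hence is absent, so no negative exponent on $\partial_{\xi_i}\tilde{\Omega}_p$ ever appears in a nonzero summand.

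For the quantitative claim, once the identity is in hand I would bound each summand pointwise: the desired estimate $|\partial_{\xi_i}^\alpha \tilde{\Omega}_p^{-1}| \lesssim |\tilde{\Omega}_p|^{-1} L_i^{-\alpha}$ is equivalent, term by term, to
\begin{equation*}
|\partial_{\xi_i}\tilde{\Omega}_p|^{\gamma_j}\, L_i^{\alpha} \lesssim |\tilde{\Omega}_p|^{\beta_j-1},
\end{equation*}
which is precisely the standing hypothesis \eqref{Equation: Regularity Omega desired est}. Summing over the finitely many summands produced by the induction closes the argument.

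The lemma is essentially algebraic, so I do not expect any analytic obstruction; the only real work is the bookkeeping in the inductive step to verify that both branches of the index shift simultaneously preserve the two inequalities $2\beta_j - \gamma_j \geq \alpha+2$ and $\beta_j \geq \gamma_j + 1$. The fact that $\partial_{\xi_i}^2 \tilde{\Omega}_p$ is a constant, rather than a new polynomial, is what prevents the complexity of the sum from exploding and allows the two-parameter family of exponents $(\gamma_j,\beta_j)$ to be closed under differentiation.
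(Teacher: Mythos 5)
Your proof is correct and follows essentially the same route as the paper: induction on $\alpha$, differentiating a generic summand to produce the two branches $(\gamma_j-1,\beta_j)$ and $(\gamma_j+1,\beta_j+1)$, and observing that $\partial_{\xi_i}^2\tilde\Omega_p$ is constant so the first branch stays in the allowed family. One small inaccuracy: your explicit normal form $\tilde\Omega_p = (-1)^{i+1}\xi_i^2 + R(\xi_{\neq i})$ is not what the extensions used later in the paper actually look like (they are general quadratics with cross terms and sometimes no $\xi_i^2$ term at all), but the only property you actually invoke---that $\partial_{\xi_i}^2\tilde\Omega_p$ is a constant---does hold for every quadratic extension, which is what both you and the paper tacitly restrict to.
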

\begin{proof}
    This is a trivial proof by induction. The base case is immediate, so we now proceed for some $\alpha > 0$, where we have
    \[
    \partial_{\xi_i}^\alpha \tilde{\Omega}_p^{-1} = \sum_j c^1_j \frac{(\partial_{\xi_i}\tilde{\Omega}_p)^{\gamma_j-1}\partial_{\xi_i}^2\tilde{\Omega}}{\tilde{\Omega}_p^{\beta_j}} + \sum_j c^2_j\frac{(\partial_{\xi_i}\tilde{\Omega}_p)^{\gamma_j+1}}{\tilde{\Omega}_p^{\beta_j+1}}
    \]
    with $2\beta_j-\gamma_j\geq \alpha+1$ and $\beta_j\geq \gamma_j + 1$ by the inductive hypothesis at $\alpha - 1$. Since $\partial_{\xi}^2\tilde{\Omega}_p = c(\tilde{\Omega})$ and both hypothesis on $\gamma_j$ and $\beta_j$ hold trivially, we conclude the proof.
\end{proof}

We now record an improvement to \cite[Lemma 5.8]{schippa2024improved}, which is similar to Lemma \ref{Lemma: M6 estimate}. In particular, this lemma provides a pointwise estimate for the nonresonant symbol $\tilde{\sigma}_6$. 
\begin{lemma}\label{Lemma: spatial symbol size estimate}
    The non-resonant operator $\tilde{\sigma_6}$ satisfies the size and regularity estimate:
    \begin{equation}
    \left|\tilde{\sigma}_6(k_1, \dots, k_6)\right|\lesssim 
    \begin{cases}
        m^2(N_3^*) & N_1\sim N_2\gg N_3^*\sim N_4^*\\
        m(N_1^*)m(N_3^*) & \mbox{else}.
    \end{cases}
    \end{equation}
\end{lemma}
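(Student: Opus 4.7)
The plan is to split $\tilde{\sigma}_6$ via the decomposition $M_6 = M_6^1 + \sigma_6 \Omega_6$, which on the nonresonant set (where $\overline{M}_6 = 0$) gives
\[
\tilde{\sigma}_6 = \sigma_6 + \frac{M_6^1}{\Omega_6}.
\]
The $\sigma_6$ term is immediate: $|\sigma_6|\leq \tfrac{1}{6}\prod_{j=1}^6 m(k_j)\leq m(N_1^*)m(N_3^*)$, and in the regime $N_1\sim N_2\gg N_3^*\sim N_4^*$ this is further bounded by $m^2(N_3^*)$ since $m$ is monotone decreasing. The real work therefore concerns the quotient $M_6^1/\Omega_6$.

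The central analytical tool is a mean-value estimate on $g(k) := m^2(k)k^2$. Since $g$ is smooth as a function of $|k|$ with $|g'(\xi)|\lesssim m^2(|\xi|)|\xi|$, for any pair $k,k'$ with $|k|\sim|k'|\sim N_1^*$ we have
\[
|g(k)-g(k')| \lesssim m^2(N_1^*)N_1^*\,\bigl||k|-|k'|\bigr| \lesssim m^2(N_1^*)N_1^*\,\min(|k-k'|,|k+k'|),
\]
using the elementary inequality $||k|-|k'||\leq \min(|k-k'|,|k+k'|)$. Combined with $|k^2-{k'}^2| = |k-k'||k+k'|$ and the fact that $\max(|k-k'|,|k+k'|)\gtrsim N_1^*$, this yields a sharp pair-ratio $|g(k)-g(k')|/|k^2-{k'}^2|\lesssim m^2(N_1^*)$.

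In the ``else'' case ($N_1^*\sim N_2^*\sim N_3^*\sim N_4^*$), the negation of resonance conditions (ii) and (iii) forces $|\Omega_6|\gtrsim (N_1^*)^2$, so the crude estimate $|M_6^1|\lesssim m^2(N_1^*)(N_1^*)^2$ gives the quotient $\lesssim m^2(N_1^*)\leq m(N_1^*)m(N_3^*)$. In Case~1 ($N_1\sim N_2\gg N_3^*\sim N_4^*$) we isolate the dominant pair and write $g(k_1)-g(k_2) = c(k_1,k_2)(k_1^2-k_2^2)$ with $|c|\lesssim m^2(N_1^*)\leq m^2(N_3^*)$; subtracting $c\,\Omega_6$ from $M_6^1$ leaves only $\sum_{j\geq 3}(-1)^{j+1}(g(k_j) - c k_j^2)$, whose absolute value is $\lesssim m^2(N_3^*)(N_3^*)^2$ since $|c|\leq m^2(N_3^*)$ and $|g(k_j)|\leq m^2(N_3^*)k_j^2$ for $|k_j|\lesssim N_3^*$. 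Establishing $|\Omega_6|\gtrsim (N_3^*)^2$ from the negation of resonance~(i) then produces the claimed $m^2(N_3^*)$ bound.

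The main obstacle is the sub-case analysis needed to establish $|\Omega_6|\gtrsim (N_3^*)^2$ in Case~1 uniformly on the nonresonant set. The negation of resonance~(i) splits into two scenarios: if $k_1 k_2 > 0$ then $|k_1+k_2|\sim N_1^*$, hence $|k_1^2-k_2^2|\sim N_1^*|k_1-k_2|$, and one must argue via the structure of the lower-order terms that $|\Omega_6|$ cannot collapse below $(N_3^*)^2$; if $k_1 k_2 < 0$ with $|k_1+k_2|\gtrsim (N_3^*)^2/N_1^*$ then $|k_1^2-k_2^2|\gtrsim (N_3^*)^2$ directly dominates the lower-order contributions to $\Omega_6$. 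This case-work parallels \cite[Lemma 5.8]{schippa2024improved}; the novelty is that our sharper pair estimate for $g$ (as a smooth function of $|k|$) yields the refined $m^2(N_3^*)$ bound in Case~1, rather than the coarser $m(N_3^*)$ of prior work, which is precisely the gain that enables the extension to $s<\tfrac{1}{3}$.
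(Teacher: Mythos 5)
Your identification of the decomposition $\tilde{\sigma}_6 = \sigma_6 + M_6^1/\Omega_6$ on the nonresonant set and the pair-ratio estimate $|g(k)-g(k')|/|k^2-(k')^2|\lesssim m^2(\max(|k|,|k'|))$ for $g(k)=m^2(k)k^2$ are correct and coincide with the engine of the paper's proof. However, the proposal has three genuine gaps. Most substantially, the lemma asserts both a size estimate \emph{and} a regularity estimate --- the regularity is what allows the symbol to be expanded via the Fourier series of Proposition~\ref{Proposition: Regularity} when estimating the multilinear operators later --- and your proposal does not address regularity at all. In the paper's proof this is the dominant piece of work: for each frequency regime one must choose a polynomial extension $\tilde{\Omega}_6$ of $\Omega_6$ to $\mathbb{R}^6$ compatible with the almost-orthogonal decomposition at hand, and then verify condition \eqref{Equation: Regularity Omega desired est} using Lemma~\ref{Lemma: Omega derivative}; the trivial extension fails in several regimes (the paper switches to a non-trivial extension in the $N_1\sim N_2\gtrsim N_3^*\gg N_4^*$ case and in the $N_1^*\sim N_4^*$ cases).

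The remaining two gaps concern the size estimate. First, the sub-case $k_1 k_2 > 0$ that you flag as ``the main obstacle'' in Case~1 is vacuous: on $\Gamma_6$ with $N_1\sim N_2\gg N_3^*$, the zero-sum constraint gives $|k_1+k_2|=|k_3+\cdots+k_6|\lesssim N_3^*\ll N_1$, which already forces $k_1 k_2 < 0$. Moreover, if that configuration were admissible, no amount of ``arguing via the structure of the lower-order terms'' would save you, since with $|k_1-k_2|$ arbitrarily small $\Omega_6$ can vanish outright. Second, characterizing the ``else'' branch as $N_1^*\sim N_2^*\sim N_3^*\sim N_4^*$ is incomplete: it omits the regimes $N_1\gg N_2$ and $N_1\sim N_2\gtrsim N_3^*\gg N_4^*$. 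In the latter one only has $|\Omega_6|\gtrsim N_1 N_3^*$, not $(N_1^*)^2$, and pairing this with the crude numerator bound $|M_6^1|\lesssim m^2(N_1^*)(N_1^*)^2$ yields $m^2(N_1^*)N_1^*/N_3^*$, which is \emph{larger} than the target $m(N_1^*)m(N_3^*)$ since $m(k)k$ is increasing; you must pair $(k_1,k_2)$ and $(k_3,k_4)$ separately to obtain the sharper $|M_6^1|\lesssim m(N_1^*)N_1^* m(N_3^*)N_3^*$, as the paper does.
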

\begin{proof}
    The tools for this were observed, but unused in \cite{schippa2024improved}. Before proceeding with the case work, we observe several facts which will enable this to go smoothly. The first is that the above lemma is immediate for the $\sigma_6$ portion of $\tilde{\sigma}_6$. Secondly, we note that for any $M \gtrsim L_i$:
    \[
    |\partial_{\xi_i}^{\alpha}M_6^1|\lesssim \frac{m^2(N_i)N_i^2}{N_i^{\alpha}}\lesssim \frac{m(N_1^*)N_1^*m(M)M}{N_i^{\alpha-1}L_i}\quad \alpha > 0,
    \]
    and also that in every case $|\Omega_6|\gtrsim N_1^*M$ (where these two will always coincide). Thus, it is sufficient to demonstrate both the estimate
    \[
    \left|\frac{M_6^1}{\Omega_6}\right|\lesssim m(N_3^*)m(M)\quad 1\leq i\leq 6
    \]
    for an appropriate $M\gtrsim L_i$, together with an extension, $\tilde{\Omega}_6$, of $\Omega_6$ that satisfies the regularity bound \eqref{Equation: Regularity Omega desired est}.\\
    
    \textbf{Case $N_1\gg N_2$:} In this case we find that trivially have the size estimate 
    \[
    |M^1_6/\Omega_6|\lesssim m^2(N_1).
    \]
    Regularity then follows from the regularity of both $\Omega_6^{-1}$ and $M^1_6$. Indeed, this follows from an application of Lemma \ref{Lemma: Omega derivative} and the observation that \eqref{Equation: Regularity Omega desired est} holds with $\partial_{\xi_i}\tilde{\Omega}_6 = 2\xi_i$, $|\xi_i|\sim N_i$, and $L_i\sim N_i$ after noting that $2\beta_j-\gamma_j\geq \alpha+2$.

    \textbf{Case $N_1\sim N_2\gtrsim N_3^*\gg N_4^*$: } Here we have (by symmetry) that $|\Omega_6|\gtrsim N_1N_3$ and the associated $M_6$ bound $|M_6|\lesssim m(N_1^*)N_1^*m(N_3^*)N_3^*$. We may now proceed with regularity, which again follows from an application of Lemma \ref{Lemma: Omega derivative}, but with the extension:
    \[
        \tilde{\Omega}_6(k_1,\dots, k_6) = -k_1k_3 - k_3(k_4+k_5+k_6)-(k_1-k_2)(k_4+\dots+k_6)-k_4^2+k_5^2-k_6^2,
    \]
    where $|\partial_{\xi_i}\tilde{\Omega}_6|\lesssim N_1$, $L_i \sim \min(N_i, N_3)$ and any $2\beta_j - \gamma_j \geq \alpha +2$:
    \[
    \frac{|\partial_{\xi_i}\tilde{\Omega}_p|^{\gamma_j}L_i^\alpha}{|\tilde{\Omega}_p|^{\beta_j-1}}\lesssim \frac{N_1^{\gamma_j}N_3^{\alpha}}{N_1^{\beta_j-1}N_3^{\beta_j-1}}\lesssim N_3^{\gamma_j-2\beta_j+\alpha+2}\lesssim 1.
    \]
    \textbf{Case $N_1^*\sim N_2^*\gg N_3^*\sim N_4^*$: }We have here that $|k_1+k_2|\gg \tfrac{(N_3^*)^2}{N_1^*}$, so we let $N_{12} \sim |k_1+k_2|$, and decompose $|k_i|\sim N_{12}$ for $i=1,2$. Moreover, because $N_{12}N_1^*\gg (N_3^*)^2$, we have
    \begin{align*}
    |M^1_6|&\lesssim m^2(N_1^*)N_1^*N_{12} + m^2(N_3^*)(N_{3}^*)^2\lesssim m^2(N_3^*)N_1^*N_{12}\\
    |\Omega_6|&\gtrsim N_1^*N_{12} + (N_3^*)^2\gtrsim N_1^*N_{12}.
    \end{align*}
    We take the trivial extension of $\Omega_6$, so that it suffices to check the regularity condition \eqref{Equation: Regularity Omega desired est}. Observe that $|\partial_{\xi_i}\tilde{\Omega}_p|\sim N_i$ and $L_i\sim \min(N_{12}, N_i)$, so that:
    \[
    \frac{|\partial_{\xi_i}\tilde{\Omega}_p|^{\gamma_j}L_i^\alpha}{|\tilde{\Omega}_p|^{\beta_j-1}}\sim \frac{N_i^{\gamma_j}\min(N_{12}, N_i)^\alpha}{(N_1^*)^{\beta_j-1}N_{12}^{\beta_j-1}}\lesssim N_i^{\gamma_j-\beta_j+1}N_{12}^{\alpha - \beta_j + 1}\lesssim N_{12}^{\gamma_j - 2\beta_j+\alpha+2}\lesssim 1,
    \]
    where we have first used $\beta \geq \gamma_j+1$, and then $2\beta_j-\gamma_j\geq \alpha + 2$.
    
    \textbf{Case $N_1^*\sim N_4^*\gg N_5^*$: }We have no contribution in the case that $\{N_1^*, N_2^*, N_3^*, N_4^*\} = \{N_1, N_2, N_3, N_4\},$ so we may restrict to either the cases:
    \begin{enumerate}[label = (\roman*)]
        \item $\{N_1^*, N_2^*, N_3^*, N_4^*\} = \{N_1,N_2,N_3,N_5\}$
        \item $\{N_1^*, N_2^*, N_3^*, N_4^*\} = \{N_1,N_2, N_4, N_6\}$.
    \end{enumerate}
    These are symmetric, so we handle them together by assuming the first. By the negation of the definition of resonance in this case, Definition \ref{Definition: Resonant Set}, we observe that one of the following is true:
    \begin{enumerate}
        \item[i1)] $k_1, k_3, k_5$ are of the same sign,
        \item[i2)] there is an $i\in \{1,3,5\}$ with $k_i$ having the same sign as $k_2$ and $|k_i-k_2|\ll N_1^*$,
        \item[i3)] there is an $i\in\{1,3,5\}$ with $k_i$ having the opposite sign as $k_2$ and $|k_i+k_2|\ll N_1^*$.
    \end{enumerate}
    In the first case we take 
    \[
    \tilde{\Omega}_6(k_1,\dots, k_6) = -2k_1k_3-2k_1k_5-2k_3k_5 - (k_1+k_3+k_5)(k_4+k_6)-k_4^2-k_6^2,
    \]
    while in all of the others we take the trivial extension
    \[
    \tilde{\Omega}_6(k_1,\dots, k_6) = (k_i-k_2)(k_i+k_2) + \sum_{\substack{j\ne i,2\\1\leq j\leq 6}}(-1)^{j+1}k_j^2,
    \]
    and observe that $|\tilde{\Omega}_6|\gtrsim (N_1^*)^2$ due to sign considerations. As the regularity estimate \eqref{Equation: Regularity Omega desired est} is trivial in this case, we may conclude by the observation that $|M^1_6/\Omega_6|\lesssim m^2(N_1^*)$.
\end{proof}
\subsection{Multilinear Estimates}\label{Section: I-method multilinear estimates}
In this section we obtain estimates on the multilinear operators required of Proposition \ref{Proposition: I method bounds}. In particular, will now estimate each multilinear operator of Equation \ref{Equation: I method FTC} utilizing Proposition \ref{Proposition: Regularity} and Lemmas \ref{Lemma: M6 estimate} and \ref{Lemma: spatial symbol size estimate}. In all of the work that is to come we ignore the presence of complex conjugates, as these change nothing outside of Lemma \ref{Lemma: M6 integral term}, where the convention will be that even indexed terms come with a complex conjugate.

The first term we estimate is the pointwise term involving the nonresonant $\tilde{\sigma}_6$. This estimate is the main culprit in the virtual limitation of $s > \frac13$, which arises when all of the frequencies are $\gtrsim N$. To overcome this issue we make use of the additional factors of $m$ to compensate for losses induced via Sobolev Embedding.
\begin{lemma}\label{Lemma: Pointwise tilde sigma 6}
    Let $s > 0$. Then the following estimate holds:
    \[
|\Lambda_6(\tilde{\sigma}_6)(t)|\lesssim N^{-\delta}\|Iu(t)\|_{H^1_x(\lambda\mathbb{T})}^6.
    \]
\end{lemma}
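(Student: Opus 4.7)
My plan is to Littlewood-Paley decompose $u = \sum_{N \in 2^{\mathbb{N}_0}} P_N u$ and, using orthogonality and the $\Upsilon_6$ support restriction, reduce the problem to bounding $|\Lambda_6(\tilde{\sigma}_6; u_{N_1^*},\ldots, u_{N_6^*})|$ summed over dyadic tuples with $N_1^*\geq \cdots\geq N_6^*$ and $N_1^*\sim N_2^*\gtrsim N$. Proposition \ref{Proposition: Regularity}, combined with the size bound $\mathcal{M}(N_1^*,\ldots, N_6^*)$ of Lemma \ref{Lemma: spatial symbol size estimate}, followed by H\"older with the two highest frequencies in $L^2(\lambda\mathbb{T})$ and the remaining four in $L^\infty(\lambda\mathbb{T})$ (together with Bernstein $\|P_{N_j^*}u\|_{L^\infty}\lesssim (N_j^*)^{1/2}\|P_{N_j^*}u\|_{L^2}$), reduces matters to a pointwise dyadic inequality of the form
\[
\frac{\mathcal{M}(N_1^*,\ldots, N_6^*)\prod_{j=3}^6 (N_j^*)^{1/2}}{\prod_{j=1}^6 m(N_j^*)\,N_j^*}\lesssim N^{-\delta(s)}\prod_{j=1}^6 c_{N_j^*}
\]
with weights $\{c_N\}$ summable along each coordinate, after the conversion $\|P_{N_j^*}u\|_{L^2}\sim (m(N_j^*)N_j^*)^{-1}\|IP_{N_j^*}u\|_{H^1}$. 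Two applications of Cauchy-Schwarz in the dyadic sums then yield the desired $N^{-\delta(s)}\|Iu\|_{H^1}^6$ bound.

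The verification of the pointwise inequality above splits along the two cases of Lemma \ref{Lemma: spatial symbol size estimate}. In the regime $N_1\sim N_2\gg N_3^*\sim N_4^*$ the improved bound $\mathcal{M}\lesssim m^2(N_3^*)$ eliminates the dangerous $m(N_1^*)^{-1}$ growth, while the Bernstein factor $(N_j^*)^{1/2}$ against the trivial weight $m(N_j^*)=1$ at the low scales yields summability factors $(N_j^*)^{-1/2}$; the $N$-decay then comes from the lower bound $N_1^* N_2^*\gtrsim N^2$. In the ``else'' case, $\mathcal{M}\lesssim m(N_1^*)m(N_3^*)$ cancels two of the six denominator factors $m(N_j^*)$, and the combined monotonicity of $m(k)$ (decreasing) and $m(k)k$ (increasing), together with $N_1^*, N_2^*\gtrsim N$, provides the slack needed to produce a small power of $N^{-1}$.

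The main obstacle, and the reason for the $s > \tfrac13$ restriction in \cite{schippa2024improved}, is the subregime where all six frequencies live at a common scale $L \gg N$. We are then in the ``else'' case with $\mathcal{M}\lesssim m^2(L)=(N/L)^{2(1-s)}$, and the left-hand factor reduces to $L^{-4s}N^{-4(1-s)}$. The dyadic series $\sum_{L\gtrsim N} L^{-4s}$ is geometric with ratio $2^{-4s}<1$ and therefore converges for every $s>0$ to $\sim N^{-4s}$, producing a net gain of $N^{-4}$. It is exactly the preservation of two distinct $m$-factors in the refined Lemma \ref{Lemma: spatial symbol size estimate}, as opposed to the single factor that would appear from a cruder pointwise estimate, that extends the admissible range of $s$ from $s>\tfrac13$ down to $s>0$.
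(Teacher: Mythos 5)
Your proof is correct and follows the paper's argument: Littlewood-Paley decomposition, Proposition \ref{Proposition: Regularity}, the symbol bound of Lemma \ref{Lemma: spatial symbol size estimate}, and then an $L^2\times L^2\times(L^\infty)^4$ H\"older estimate with Bernstein (what the paper calls ``Sobolev embedding''), reducing to the dyadic constant $C(N_1,\dots,N_6)$ which the paper bounds by $N^{-1+s}N_1^{-s}$. Your identification of the all-frequencies-at-scale-$L\gg N$ regime as the decisive one, and its resolution because the second $m$-factor in Lemma \ref{Lemma: spatial symbol size estimate} turns the inner sum into $\sum_{L\gtrsim N}L^{-4s}N^{-4(1-s)}\lesssim N^{-4}$, convergent for all $s>0$, is precisely the observation underlying the paper's bound.
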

\begin{proof}
    This estimate hinges on an improvement over the trivial $\lesssim 1$ bound for the symbol $\tilde{\sigma}_6$. In particular, by a Littlewood-Paley decomposition, Proposition \ref{Proposition: Regularity}, and Lemma \ref{Lemma: spatial symbol size estimate} we see that it is sufficient to show that:
    \[
    m^2(N_3)\left|\int_{\lambda\mathbb{T}} \prod_{j=1}^6 P_{N_j}u_j\,dx\right|\lesssim N_1^{0-}\prod_{j=1}^6\|P_{N_j}Iu\|_{L^2_x(\lambda \mathbb{T})},
    \]
    where we assume that $N_1\sim N_2\gtrsim N_3\gtrsim\dots N_6$ and $N_2\gtrsim N$. However, this is nearly immediate from Sobolev Embedding:
    \[
        m(N_4)m(N_3)\left|\int_{\lambda\mathbb{T}} \prod_{j=1}^6 P_{N_j}u_j\,dx\right|
        \lesssim C(N_1,\dots, N_6)\prod_{j=1}^6\|P_{N_i}Iu\|_{H^1_x(\lambda\mathbb{T})},
    \]
    where
    \[
        C(N_1,\dots, N_6) = \frac{(N_3N_4N_5N_6)^{\frac12}}{N_3N_4\prod_{j\ne 3,4}m(N_i)N_i}
        \lesssim N^{-1+s}N_1^{-s}.
    \]
\end{proof}

We now handle the $10$-linear operator induced by the integration-by-parts procedure. The main contribution of the below is in the gains obtained by carrying the $m(N_1^*)m(N_3^*)$ bound over the trivial estimate of $1$. In particular, this enables us to handle case that $|k_1|,\dots, |k_{10}|\gtrsim N$, although care must be taken in the scenarios when we have substituted into the terms corresponding to $N_1^*$ and $N_3^*$ when forming the $10$-linear operator from the $6$-linear operator.

\begin{lemma}\label{Lemma: 10 linear operator}
    Let $s > 0$ and $T\lesssim \mu = \lambda^{2-\alpha}N^{-\alpha}$. Then:
    \begin{equation}\label{Equation: Lemma L10 integral}
\left|\int_0^T \Lambda_{10}(M_{10})\,ds\right|\lesssim N^{-3+}\|Iu\|_{Y_T^1}^{10}.
    \end{equation}
\end{lemma}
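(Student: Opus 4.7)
The plan is to pointwise estimate the symbol $M_{10}$ via Lemma~\ref{Lemma: spatial symbol size estimate} together with Proposition~\ref{Proposition: Regularity}, then distribute the ten factors via H\"older among a trilinear Strichartz group and an $L^\infty_{x,t}$ tail, and finally sum over the Littlewood--Paley decomposition.

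First I would Littlewood--Paley decompose each of the ten inputs with scales $M_j\in 2^{\mathbb{N}_0}$ and, after relabeling, assume $M_1 \geq M_2 \geq \cdots \geq M_{10}$. The constraint $k_1+\cdots+k_{10}=0$ inherited from $\Gamma_{10}$ forces $M_1 \sim M_2$, and the $\Upsilon_6$-localization propagates through each elongation $X_j$ to force at least two of the effective (elongated) arguments of $X_j\tilde{\sigma}_6$ to have modulus $\gtrsim N$. Since $\sigma_6$ is bounded by a product of $m$'s, since Lemma~\ref{Lemma: spatial symbol size estimate} gives $|\tilde{\sigma}_6|\lesssim m(N_1^*)m(N_3^*)$, and since each elongation sum satisfies $|k_j+\cdots+k_{j+5}|\lesssim M_1$, the monotonicity of $m$ delivers the pointwise bound
\[
|M_{10}(k_1,\dots,k_{10})|\lesssim m(M_1)\,m(M_3),
\]
together with the regularity estimates \eqref{Equation: Regularity} (verified branch by branch using Lemma~\ref{Lemma: Omega derivative}, exactly as in Lemma~\ref{Lemma: spatial symbol size estimate}). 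At this point Proposition~\ref{Proposition: Regularity} allows me to discard the symbol at the cost of a rapidly summable Fourier weight.

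Next I would apply H\"older in space--time, isolating the three largest factors $P_{M_1}u,P_{M_2}u,P_{M_3}u$ for a trilinear Strichartz estimate (Proposition~\ref{Lemma: trilinear Estimates} or Corollary~\ref{Corollary: Trilinear Strichartz Specific}) and estimating the remaining seven factors in $L^\infty_{x,t}$ via the Bernstein embedding
\[
\|P_M u\|_{L^\infty_{x,t}}\lesssim M^{1/2}\|P_M u\|_{Y^0}
\]
recorded at the end of Section~\ref{Section: Atomic Space Overview}. Converting each $Y^0$-norm of $u$ into a $Y^1$-norm of $Iu$ through $\|P_M u\|_{Y^s}\sim(m(M)M^s)^{-1}\|P_M Iu\|_{Y^s}$, the extracted factors $m(M_1)M_1$ and $m(M_3)M_3$ from the pointwise symbol bound are absorbed into two of the three trilinear slots, the trilinear gain $(N^{-1}+M_3/M_1)^{1/2}$ provides the decisive smallness, and the seven $L^\infty_{x,t}$ slots contribute positive and summable powers of the corresponding $M_j$'s after the $Iu$-conversion.

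The main obstacle I expect is closing the estimate in the regime where all ten frequencies are simultaneously comparable and $\gtrsim N$. There each $m(M_j)\sim N^{1-s}M_1^{s-1}$, so the only available smallness is the off-resonance trilinear gain of Proposition~\ref{Lemma: trilinear Estimates}; combined with Remark~\ref{Remark: N12 M term} in borderline configurations, one must verify that $N^{-3+}$ is indeed produced when balanced against the time factor $T\lesssim\mu=\lambda^{2-\alpha}N^{-\alpha}$ and the scaling $\lambda\sim N^{(1-s)/s+}$ with $s>\alpha/3$. Lesser but more routine issues include checking that the degenerate HHL-type configurations, where $M_3\ll M_1$, lie in the scope of the enhanced trilinear estimate (Condition \eqref{Condition: M condition}) rather than its weaker base form, and that the regularity hypothesis of Lemma~\ref{Lemma: Omega derivative} can be verified for the elongated $\Omega_6$ in each branch of Definition~\ref{Definition: Resonant Set}.
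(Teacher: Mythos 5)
The paper's proof does not actually rely on a single trilinear Strichartz plus seven Bernstein/$L^\infty_{x,t}$ factors: its main split is six factors in $L^6_{x,t}$ (via Proposition~\ref{Proposition: Scaled Strichartz}) and four factors in $L^\infty_{x,t}$ via Bernstein, with the trilinear estimate invoked only in one low-frequency subcase ($K_3^*\ll N$). Your $3+7$ distribution is too weak: after the conversion $\|P_Mu\|_{Y^0}\sim (m(M)\langle M\rangle)^{-1}\|P_MIu\|_{Y^1}$, each Bernstein factor with $M\gtrsim N$ costs $N^{s-1}M^{1/2-s}$, a power of $M$ that grows for $s<1/2$. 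With seven such factors, three $L^6$ factors contributing $M^{-s}$, and a symbol prefactor of order $M^{2s-2}$, the net dyadic exponent in the regime $K_1^*\sim\cdots\sim K_{10}^*\sim M\gtrsim N$ is $M^{3/2-8s}$, which does not sum for $s\leq 3/16$. The lemma is stated for all $s>0$, so your estimate does not close; moreover, in this regime the trilinear gain $(1/N+|I_1|/L)^{1/2}\sim1$ is trivial (Corollary~\ref{Corollary: Trilinear Strichartz Specific} needs $|I_1|\ll L$), so the ``decisive smallness'' you invoke is not available.

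Your symbol bound is also incorrect. You claim $|M_{10}|\lesssim m(M_1)m(M_3)$, with $M_1\geq M_3$ the first and third largest of the ten raw frequencies. The bound $|\tilde\sigma_6|\lesssim m(N_1^*)m(N_3^*)$ is in the six elongated variables, and the elongated variable $|k_{11}+\cdots+k_{15}|$ can be far smaller than $K_1^*$ due to cancellation; of the remaining five raw variables, $N_3^*$ can be as small as $K_8^*$ if the five largest $K_j$ all enter the elongation. Monotonicity of $m$ therefore works against you: one only gets the weaker $\overline{\mathcal M}_{10}\lesssim m(K_7^*)m(K_8^*)$ (the paper's ``better case''), and you cannot upgrade this to $m(K_1^*)m(K_3^*)$. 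You also omit the delicate configuration where the elongated frequency coincides with one of $N_3^*,\ldots,N_6^*$ while $N_1^*\sim N_2^*\gg N_3^*$, for which the paper performs a further almost-orthogonal decomposition in $N_{12}=|k_1^*+k_2^*|$ and uses the asymmetric bound $m^{3/2}(K_8^*)m^{1/2}(K_{12})$; without this decomposition the regularity hypothesis of Proposition~\ref{Proposition: Regularity} is not met and the $m$-accounting breaks down. In short, both the pointwise symbol bound and the choice of H\"older distribution need to be replaced by the paper's version before the dyadic sums can close for the full range $s>0$.
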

\begin{proof}
    Before proceeding, we note how we extend bounds for the $6$ linear operator to those of the $10$ linear operator. The main tool for this is \cite[Proposition 5.9]{schippa2024improved}, which we review for completeness (Actually this is the main use of Proposition \ref{Proposition: Regularity}). 
    
    For the sake of discussion, we consider $X_1$ and observe that this is a $10$-linear operator with symbol of the form
    \[
    \frac{M^1_6}{\Omega_6}(k_{11}+\dots + k_{15}, k_2, \dots, k_6).
    \]
    We perform a Littlewood-Paley decomposition and assume $|k_{11}+\dots +k_{15}| \sim N_1$ and $|k_2|\sim N_2,\dots, |k_6|\sim N_6$. Observe that by the regularity assumptions and Proposition \ref{Proposition: Regularity} that there is an extension $\overline{M_{10}}$ of $M^1_6/\Omega_6$ with
    \begin{multline*}
\overline{M_{10}}(k_{11}+\dots + k_{15}, k_2, \dots, k_6)\chi_{I_1}(k_{11}+\dots+k_{15})\chi_{I_2}(k_2)\dots\chi_{I_6}(k_6)\\
= \frac{1}{L_1\cdots L_6}\sum_{\xi_1,\dots, \xi_6\in\mathbb{Z}}m(\tfrac{\xi_1}{L_1},\dots,\tfrac{\xi_6}{L_6})e^{i\frac{(k_{11}+\dots+k_{15})\xi_1}{L_1}}\prod_{j=2}^6e^{i\frac{k_j\xi_j}{L_j}},
    \end{multline*}
    where $m$ satisfies the bounds provided for $M^1_6/\Omega$ so long as $k_{11}+\dots +k_{15}$ is localized to an appropriately sized interval. We then find via Fourier Inversion that
    \begin{multline}\label{Equation: 10 linear reduction firs step}
        \underset{\substack{k_1+\dots+k_6 = 0\\k_{11}+\dots+k_{15} = k_1\\0\leq t\leq T}}{\int\sum}\overline{M_{10}}(k_{11}+\dots+k_{15}, k_2,\dots, k_6)\widehat{u}(k_{11},t)\dots \widehat{u}(k_{15},t)\prod_{j=2}^6\widehat{u}(k_j,t)\,dt\\
        =\frac{1}{L_1\dots L_6}\sum_{\xi_1,\dots, \xi_6\in \mathbb{Z}}m(\tfrac{\xi_1}{L_1},\dots,\tfrac{\xi_6}{L_6})\int_{\lambda\mathbb{T}\times [0,T]}\prod_{j=1}^5u(x+\frac{\xi_{1}}{L_1},t)\prod_{j=2}^6u(x+\tfrac{\xi_j}{L_j},t)\,dx\,dt,
    \end{multline}
    where this holds under localization of the individual $u$'s in the same manner. It follows that we may take the claimed bounds above, even for the larger multilinear operator. A word of warning: even though we're taking bounds with respect to an extension of $M^1_6/\Omega_6$ viewed as a symbol on $\mathbb{R}^6$, we will still denote the bound of Proposition \ref{Proposition: Regularity} for this object as $\overline{\mathcal{M}_{10}}$.

    By further dyadically decomposing $|k_{1j}|\sim K_{1j}$, it follows that
    \begin{multline*}
        \eqref{Equation: Lemma L10 integral}\lesssim \sum_{K_{11}\gtrsim \cdots K_{15}}\sum_{N_1\gtrsim \cdots \gtrsim N_6}\frac{\overline{\mathcal{M}_{10}}}{P_{N_1}\prod_{j=1}^5m(K_{1j})K_{1j}\prod_{j=2}^6m(N_j)N_j}\\
        \times\left|\int_{\lambda\mathbb{T}}\int_0^T P_{N_1}\left(\prod_{j=1}^5 w_{K_{1j}}\right)\prod_{j=2}^6w_{N_j}\,dtdx\right|,
    \end{multline*}
    with the obvious modifications when we wish to utilize a finer decomposition and almost-orthogonality.
    
    We now observe that if  $N_1^*\sim N_2^*\gg N_3^*\sim N_4^*$ and $N_1 \in \{N_3^*, \dots, N_6^*\}$, then may assume that $N_1 = N_3^*$ and localize both $|k_1^*|\sim N_{12}$ and $|k_2^*|\sim N_{12}$ with $|k_1^*+k_2^*|\sim N_{12}$ (where $k_j^*$ for $j = 1,2$ are taken with respect to the collection $\{k_2,\dots, k_6\}$). Under this convention, we have the size and regularity bound $m^{\frac32}(N_4^*)m^{\frac12}(N_{12})$. We may now perform a relabeling: we now deal with dyadic frequencies $K_1,\dots, K_{10}$, where $K_1 = N_1^*$, $K_2 = N_2^*$ and the remaining are arbitrarily decided, and denote $K_{12} = N_{12}$. Taking instead the weaker size bound 
    \[
    \overline{\mathcal{M}_{10}}\lesssim m^\frac32(N_4^*)m^\frac12(N_{12})\lesssim m^\frac32(K_8^*)m^\frac12(K_{12}),
    \]
    we see that this translates into the bound:
    \begin{multline}\label{Equation: Worse Case}
\left|\int_{\Gamma_6}\int_0^T\frac{\overline{M_{10}}(k_{11}+\dots+k_{15},k_2,\dots,k_6)\prod_{j=1}^5\widehat{w}(k_{1j},t)\prod_{j=2}^6\widehat{w}(k_j,t)}{\prod_{j=1}^5m(k_{1j})\langle k_{1j}\rangle\prod_{j=2}^6m(k_j)\langle k_j\rangle}\right|\\
\lesssim 
\sum_{K_1^*\gtrsim \cdots K_{10}^*}\sum_{K_{12}\lesssim K_1}\sum_{\substack{I\sim J\\ |I_1|,|I_2|=K_{12}}}C_1(K_{12},K_1,\dots, K_{10})\\
\qquad\qquad\times\left|\int_{\lambda\mathbb{T}\times [0,T]}P_{I}w_{K_1}P_{J}w_{K_2}\prod_{j=3}^{10}w_{K_j}\,dxdt\right|,
    \end{multline}
    where
    \[
C_1(K_{12},K_1^*,\dots, K_{10}^*) = \frac{m^{\frac32}(K_{8}^*)m^{\frac12}(K_{12})}{\prod_{j=1}^{10}m(K_j)K_j}.
    \]
    
    If we're not in this case, then we know that either $N_1^*\sim N_2^*\gg N_3^*\sim N_4^*$ and $N_1 \in \{N_1^*, N_2^*\}$, in which case we have the bound
    \[
    \left|\frac{M^1_6}{\Omega_6}(k_{11}+\dots + k_{15}, k_2, \dots, k_6)\right|\lesssim \overline{\mathcal{M}_{10}}\lesssim m(N_3^*)m(N_4^*),
    \]
    or we have
    \[  
    \left|\frac{M^1_6}{\Omega_6}(k_{11}+\dots + k_{15}, k_2, \dots, k_6)\right|\lesssim \overline{\mathcal{M}_{10}}\lesssim m(N_1^*)m(N_3^*),
    \]
    with no constraint on the $j$ such that $N_1 = N_j^*$. However, in each of these cases we observe that we may relabel the collection of $K_{1j}$ and $N_j$ into $K_1, \dots, K_{10}$ and take the bound
    \[
    \overline{\mathcal{M}_{10}}\lesssim m(K_7^*)m(K_8^*).
    \]
    In this case we then find
    \begin{multline}\label{Equation: Better Case}
        \left|\int_{\Gamma_6}\int_0^T\frac{\overline{M_{10}}(k_{11}+\dots+k_{15},k_2,\dots,k_6)\prod_{j=1}^5\widehat{w}(k_{1j},t)\prod_{j=2}^6\widehat{w}(k_j,t)}{\prod_{j=1}^5m(k_{1j})\langle k_{1j}\rangle\prod_{j=2}^6m(k_j)\langle k_j\rangle}\right|\\
        \lesssim 
        \sum_{K_1^*\gtrsim \cdots K_{10}^*}C_2(K_1,K_1,\dots, K_{10})\left|\int_{\lambda\mathbb{T}\times [0,T]}\prod_{j=1}^{10}w_{K_j}\,dxdt\right|,
    \end{multline}
    where
    \[
        C_2(K_1,K_1,\dots, K_{10}) := \frac{m(K_{7}^*)m(K_{8}^*)}{\prod_{j=1}^{10}m(K_j)K_j}.
    \]

    We now observe that in either \eqref{Equation: Worse Case} or \eqref{Equation: Better Case}, we may assume that $K_4^*\gtrsim N$. Indeed, if $K_4^*\ll N$, then 
    \begin{equation}\label{Equation: C1C2 low bound}
        C_1,C_2\lesssim \frac{1}{K_3^*\cdots K_{10}^*}
            \begin{cases}
            N^{-2+2s}(K_1^*K_2^*)^{-s} & \mbox{ if }K_3^*\ll N\\
            N^{-3+3s}(K_1^*K_2^*K_3^*)^{-s}K_3^* & \mbox{ if }K_3^*\gtrsim N\gg K_4^*,
            \end{cases}
    \end{equation}
    where we see that we only have to gain $N$ in the first case, and don't have to gain anything in the second. We may handle the first case after applying Corollary \ref{Corollary: Trilinear Strichartz Specific} twice
    \begin{multline*}
        \left|\int_{\lambda\mathbb{T}\times [0,T]}\prod_{j=1}^{10}w_{K_j}\,dxdt\right|\lesssim \|w_{K_1^*}w_{K_3^*}w_{K_5^*}\|_{L^2_{x,t}}\|w_{K_2^*}w_{K_4^*}w_{K_6^*}\|_{L^2_{x,t}}\prod_{j=7}^{10}\|w_{K_j^*}\|_{L^\infty_{x,t}}\\
        \lesssim (K_1^*)^{0+}\left(\frac{K_3^*}{K_1^*} + \frac{1}{N}\right)(K_7^*\cdots K_{10}^*)^{\frac12}\prod_{j=1}^{10}\|w_{K_j}\|_{Y^0},
    \end{multline*}
    where summation yields the result after noting that \eqref{Equation: C1C2 low bound} allows for losses in $K_3^*$. The second case can be handled without the use of the trilinear estimate, by simply utilizing the $L^6_{x,t}$ estimate of Proposition \ref{Proposition: Scaled Strichartz}
    \begin{multline*}
        \left|\int_{\lambda\mathbb{T}\times [0,T]}\prod_{j=1}^{10}w_{K_j}\,dxdt\right|
        \lesssim \prod_{j=1}^6 \|w_{K_j^*}\|_{L^6_{x,t}}\prod_{j=7}^{10}\|w_{K_j^*}\|_{L^\infty_{x,t}}
        \lesssim (K_1^*)^{0+}(K_7^*\cdots K_{10}^*)^{\frac12}\prod_{j=1}^{10}\|w_{K_j}\|_{Y^0},
    \end{multline*}
    and a similar summation.

    We now may assume that $K_4^*\gtrsim N$, and proceed with estimating \eqref{Equation: Worse Case}. Here, we recall that $N_1 = K_1$ and $N_2 = K_{2}$ and that we've localized $w_{K_1}$ and $w_{K_2}$ to intervals of size $K_{12} = N_{12}$. We may apply the $L^6$ estimate to the terms corresponding to $K_2, K_3^*\cdots K_7^*$, and Sobolev embedding to the terms corresponding to $K_1, K_8^*,\dots K_{10}^*$. In this fashion we find
    \begin{multline}\label{Equation: Worst case expanded}
    \eqref{Equation: Worse Case}\lesssim \sum_{K_1^*\gtrsim \cdots \gtrsim K_{10}^*}\sum_{K_{12}\lesssim K_1}\sum_{\substack{I\sim J\\ |I|,|j|=K_{12}}}C_1\cdot(K_8^*K_9^*K_{10}^*)^{\frac12}(K_1^*)^{0+}\\
    \times \|P_{I}w_{K_1}\|_{L^\infty_{x,t}}\|P_{J}w_{K_2}\|_{Y^0}\prod_{j=3}^{10}\|w_{K_j}\|_{Y^0}\\
    \lesssim \sum_{K_1^*\gtrsim \cdots \gtrsim K_{10}^*}\sum_{K_{12}\lesssim K_1}\sum_{\substack{I\sim J\\ |I|,|j|=K_{12}}}C_1\cdot(K_8^*K_9^*K_{10}^*)^{\frac12}K_{12}^{\frac12}(K_1^*)^{0+}\\
    \times \|P_{I}w_{K_1}\|_{Y^0}\|P_{J}w_{K_2}\|_{Y^0}\prod_{j=3}^{10}\|w_{K_j}\|_{Y^0},
    \end{multline}
    where
    \[
C_1\cdot(K_8^*K_9^*K_{10}^*)^{\frac12}K_{12}^{\frac12}(K_1^*)^{0+}\lesssim \frac{(m(K_8^*)K_8^*m(K_{12})K_{12})^{\frac12}}{N^{4-4s}(K_1^*K_2^*K_3^*K_4^*)^{s-}}\lesssim N^{-3+}(K_1^*)^{0-}.
    \]
    Summation of the Dyadic blocks then yields the result.

    On the other hand, if we are to estimate \eqref{Equation: Better Case}, then we have dyadic blocks $K_1^*\gtrsim \cdots \gtrsim K_{10}^*$, and we apply the $L^6_{x,t}$ estimate to $w_{K_1^*},\dots, w_{K_6^*}$ while Sobolev embedding to the remaining. In this way, we observe:
    \[
        \eqref{Equation: Better Case}\lesssim \sum_{K_1^*\gtrsim \cdots \gtrsim K_{10}^*}C_2\cdot(K_7\cdots K_{10})^{\frac12}(K_1)^{0+}\prod_{j=1}^{10}\|Iu_{K_j}\|_{Y^1},
    \]
    where
    \[
C_2\cdot(K_7^*\cdots K_{10}^*)^{\frac12}(K_1^*)^{0+}\lesssim N^{-4+}(K_1^*)^{0-}\frac{(K_7^*\cdots K_{10}^*)^{\frac12}}{K_7^*K_8^*}\lesssim N^{-4+}(K_1^*)^{0-},
    \]
    and the result follows again by summation of the dyadic blocks.
\end{proof}
The final lemma we prove provides a bound on $\Lambda_6(\overline{M}_6)$, which is the resonant $6$-linear operator. We recommend the reader refer to Definition \ref{Definition: Resonant Set} for a review of the resonance set, and the associated regularity and size bounds of Lemma \ref{Lemma: M6 estimate}. This again differs from \cite[Proposition 5.7]{schippa2024improved} in the reliance on the trilinear estimate of Proposition \ref{Lemma: trilinear Estimates}, with much of the work included for clarity.
\begin{lemma}\label{Lemma: M6 integral term}
If $s > 0$ and $T\lesssim \mu = \lambda^{2-\alpha}N^{-\alpha}$, then
\begin{equation}\label{Equation: L6 lemma integral bound}
\left|\int_0^T\Lambda_6(\overline{M}_6)\,dt\right|\lesssim N^{-3+}\|Iu\|_{Y^1_T}^6.
\end{equation}
\end{lemma}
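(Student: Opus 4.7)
The plan is to follow the scheme of~\cite[Proposition~5.7]{schippa2024improved}: dyadically decompose $\overline{M}_6$ by $|k_j|\sim N_j$ (after symmetry, $N_1\geq N_3\geq N_5$, $N_2\geq N_4\geq N_6$, $N_1\sim N_2$, $N_3^*\sim N_4^*$, with $N_1\gtrsim N$ from the restriction to $\Upsilon_6$) and partition the sum into the three cases of Definition~\ref{Definition: Resonant Set} together with case~(iv) of Lemma~\ref{Lemma: M6 estimate}. In each dyadic block, Proposition~\ref{Proposition: Regularity} and the size bound $\cal{M}(I_1,\dots,I_6)$ of Lemma~\ref{Lemma: M6 estimate} reduce~\eqref{Equation: L6 lemma integral bound} to estimating
$$\frac{\cal{M}(I_1,\dots,I_6)}{\prod_{j=1}^6 m(N_j)\langle N_j\rangle}\cdot\biggl|\int_{\lambda\mathbb{T}\times[0,T]}\prod_{j=1}^6 w_{I_j}(x,t)\,dx\,dt\biggr|,$$
with $w_{I_j}=P_{I_j}Iu$, and the target $N^{-3+}\|Iu\|_{Y^1_T}^6$ is then assembled by Cauchy--Schwarz against the $Y^1$-Plancherel identity on the remaining dyadic sums.

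For case~(iii), where $N_1^*\sim N_5^*$, five of the six top frequencies are $\sim N_1\gtrsim N$, so applying the $L^6_{x,t}$ short-time Strichartz of Proposition~\ref{Proposition: Scaled Strichartz} to every factor and inserting~\eqref{Equation: M bound 3} yields a constant $\lesssim N_1^{0+}\cdot N^{-3+3s}(N_1^*)^{-3s}(m(N_6^*)\langle N_6^*\rangle)^{-1}$, which sums to $N^{-3+}$ for any $s>0$. Case~(ii), with $N_1^*\sim N_4^*\gg N_5^*$, is structurally similar but requires an extra factor of $N^{-1}$ that the bare six-fold $L^6$ count cannot supply; I would exploit the sign constraints built into (ii)(a)--(c) to apply Proposition~\ref{Lemma: trilinear Estimates} to one high-high-low triple, which provides the missing gain.

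The delicate case is~(i): $N_1^*\sim N_2^*\gg N_3^*\sim N_4^*$, $k_1\cdot k_2<0$, and $|I_1|\sim|I_2|\sim N_{12}\ll N_1$, with symbol bound $(N_3^*)^2$ from~\eqref{Equation: M bound 2} or $m(N_1^*)N_1^* m(N_{12})N_{12}$ from~\eqref{Equation: M bound 4}. Here $L^6$ Hölder alone falls short of $N^{-3+}$ for $s<\tfrac13$, so instead I would split via Hölder as
$$\bigl\|w_{I_1}w_{I_3^*}w_{I_5^*}\bigr\|_{L^2_{x,t}}\cdot\bigl\|w_{I_2}w_{I_4^*}w_{I_6^*}\bigr\|_{L^2_{x,t}}$$
and apply Corollary~\ref{Corollary: Trilinear Strichartz Specific} to each triple. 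Since $k_1\cdot k_2<0$ forces $I_1,I_2$ onto opposite sides of the origin at scale $N_1$, we have $\operatorname{dist}(I_1,I_3^*)\sim\operatorname{dist}(I_2,I_3^*)\sim N_1$ and $|I_1|\sim N_{12}\ll N_1$, so each triple gains $(N_{12}/N_1)^{1/2}$. Summing the geometric series in $N_{12}\lesssim N_1$ against the symbol, with Remark~\ref{Remark: N12 M term} converting the factor $m(N_{12})N_{12}$ into a summable $N^{-s}N_{12}^s$ when needed, closes the estimate.

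The main obstacle is the sub-case $A_{3214b}$ (noted just before this lemma), in which only one of $\operatorname{dist}(I_1,I_3^*)$, $\operatorname{dist}(I_2,I_3^*)$ is $\sim N_1$, so Corollary~\ref{Corollary: Trilinear Strichartz Specific} no longer applies and I must revert to the base conclusion of Proposition~\ref{Lemma: trilinear Estimates} with the weaker gain $(|I_2|/N_1)^{1/2}$. Recovering the missing savings hinges on the refined factor $m(N_{12})N_{12}$ in~\eqref{Equation: M bound 4}, whose contribution $N^{-s}N_{12}^s$ (cf.\ Remark~\ref{Remark: N12 M term}) exactly balances the deficient Strichartz gain precisely when $s>\alpha/3$, matching the hypothesis of Proposition~\ref{Proposition: I method bounds} and pinpointing the mechanism that extends~\cite{schippa2024improved} below $s=\tfrac13$.
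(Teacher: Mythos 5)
Your overall scheme—dyadic decomposition, the size bounds of Lemma~\ref{Lemma: M6 estimate}, reduction via Proposition~\ref{Proposition: Regularity}, and double application of the trilinear Strichartz estimates of Proposition~\ref{Lemma: trilinear Estimates} / Corollary~\ref{Corollary: Trilinear Strichartz Specific}—matches the paper's strategy, and your treatment of the simpler cases (all frequencies large via $L^6$; two-high case via two applications of Corollary~\ref{Corollary: Trilinear Strichartz Specific}) is essentially the paper's. However there are two genuine problems in the hardest sub-case, $A_{3214b}$.

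First, a conceptual misattribution: you write that the factor $m(N_{12})N_{12}$ ``balances the deficient Strichartz gain precisely when $s>\alpha/3$.'' But the statement of this lemma requires only $s>0$, and the paper's proof establishes $N^{-3+}$ for all $s>0$. The threshold $s>\alpha/3$ enters only in the iteration count of Theorem~\ref{Theorem: GWP theorem}, where one sums $O(T/\mu)$ increments of size $N^{-3+}$ and computes $-3 + \alpha + \alpha\frac{1-s}{s} = -3 + \alpha/s < 0$. Building $s>\alpha/3$ into Lemma~\ref{Lemma: M6 integral term} would give a circular or weaker argument and does not match the structure of the paper.

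Second, and more substantively, in $A_{3214b}$ your proposal to ``revert to the base conclusion of Proposition~\ref{Lemma: trilinear Estimates} with gain $(N_{12}/N_1)^{1/2}$'' and recover via $m(N_{12})N_{12}$ alone does not close. With only the base trilinear estimate, after multiplying by $m(N_{12})N_{12}/(N^{3-3s}N_1^{3s-}N_5N_6)$ and summing over $N_{12}\lesssim N_1$, the term $m(N_{12})N_{12}\cdot N_{12}/N_1$ is dominated by $N_{12}\sim N_1$ and contributes $\sim m(N_1)N_1^2/N_1 = N^{1-s}N_1^s$, leading to an overall $N^{-2+}$ rather than $N^{-3+}$. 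The paper handles this by further splitting $A_{3214b}$ according to whether $N_{12}^2\lesssim N_5^*N_1$ or $N_5^*N_1\ll N_{12}^2$: in the first regime the base trilinear estimate suffices, but in the second one must invoke the \emph{enhanced} conclusion of Proposition~\ref{Proposition: Trilinear Free Estimate}, for which condition~\eqref{Condition: M condition} gives $K\lesssim N_5^*N_1/N_{12}$ and hence a gain $\bigl(N_5^*/N_{12}+1/N\bigr)$; only then does the sum over $N_{12}$ come out to $(N_1^*)^{0+}N_5^*+N^{-s}N_1^{s+}$, which closes for all $s>0$. You also appear to conflate which case each symbol bound serves: \eqref{Equation: M bound 4} applies under $|k_1+k_2|\sim|k_3+k_4|\gg|k_5^*|$ (the $A_{3214b}$ regime inside resonance~(ii), i.e.\ $N_1^*\sim N_4^*$), not under resonance~(i) where $|k_1+k_2|\ll (N_3^*)^2/N_1^*$ and \eqref{Equation: M bound 2} is the right tool.
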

\begin{proof}
    We dyadically decompose into $|k_i|\sim N_i$, and let $N_j^*$ denote the decreasing rearrangement of the terms. By symmetry we may assume $N_1\geq N_2$ and $N_1\geq N_3\geq N_5$, $N_2\geq N_4\geq N_6$, and our assumptions on resonance afford us $\{N_1^*, N_2^*\} = \{N_1, N_2\}$. We again denote $\widehat{w}(\xi,t) = m(\xi)\langle\xi\rangle \widehat{u}(\xi,t)$, and reiterate that all even indexed terms implicitly come with complex conjugation. With this notation, the fundamental building block will then be the estimate
    \begin{multline}\label{Equation: Resonant Base Estimate}
        \left|\int_{\Gamma_6}\int_0^T\frac{\overline{M}_6(k_1,\dots, k_6)\prod_{j=1}^6\widehat{w}(k_j,t)}{\prod_{j=1}^6m(k_j)\langle k_j\rangle}\,dtd\Gamma_6\right|\\
        \lesssim \sum_{\substack{I_1,\dots, I_6\\I_i\subset A_{N_i}\\(\mathcal{R})\,\mathrm{ holds}}}\frac{\overline{\mathcal{M}}_6(I_1,\dots, I_6)}{\prod_{j=1}^jm(N_j)N_j}\left|\int_{\Gamma_6}\int_0^T\prod_{j=1}^6\widehat{w}(k_j,t)\chi_{I_j}(k_j)\,dtd\Gamma_6\right|,
    \end{multline}
    where we obtain bounds for $\overline{\mathcal{M}}_6$ via Lemma \ref{Lemma: M6 estimate}. This lemma will also provide us the regions $I_j$ on which we will sum. In most of the scenarios we have that the blocks $I_j$ are the dyadic $N_j$, but in $A_1$ and portions of $A_3$ they will be slightly smaller in accordance with Lemma \ref{Lemma: M6 estimate}.

    As is standard, we split the integral into regions
    \begin{align*}
        A_1 &= \{k\in\Gamma_6\,:\,N_2^*\gtrsim N\gg N_3^*\sim N_4^*\},\\
        A_2 &= \{k\in \Gamma_6\,:\, N_3^*\gtrsim N\gg N_4^*\},\\
        A_3 &= \{k\in\Gamma_6\,:\,N_4^*\gtrsim N\gg N_5^*\},\\
        A_4 &= \{k\in\Gamma_6\,:\, N_5^*\gtrsim N\},
    \end{align*}
    where there is no contribution from $A_2$ in this integral.\\
    
    \textbf{Case $A_1$: } In this region we take the $\overline{\mathcal{M}}_6$ bound of Equation \eqref{Equation: M bound 2}, $\overline{\mathcal{M}}_6\lesssim (N_3^*)^2$. We observe that we may apply the trilinear Strichartz of Corollary \ref{Corollary: Trilinear Strichartz Specific}. Hence, we find
    \begin{multline*}
        \sum_{\substack{I_1,\dots, I_6\\I_i\subset A_{N_i}\\(\mathcal{R})\,\mathrm{ holds}}}\frac{\overline{\mathcal{M}}_6(I_1,\dots, I_6)}{\prod_{j=1}^jm(N_j)N_j}\left|\int_{\lambda\mathbb{T}}\int_0^T\prod_{j=1}^6w_{N_j^*}\,dtdx\right|\\
        \lesssim \sum_{N_1^*\sim N_2^*\gg N_3^*\gtrsim\cdots\gtrsim N_6^*}\frac{N_3^*N_4^*}{N^{2-2s}(N_1^*)^{2s-}\prod_{j=3}^6N_j^*}\|w_{N_1^*}w_{N_3^*}w_{N_5^*}\|_{L^2_{x,t}}\|w_{N_2^*}w_{N_4^*}w_{N_6^*}\|_{L^2_{x,t}}\\
        \lesssim \sum_{N_1^*\sim N_2^*\gg N_3^*\gtrsim\cdots\gtrsim N_6^*}\frac{N_3^*N_4^*}{N^{2-2s}(N_1^*)^{2s-}N_3^*N_4^*(N_5^*N_6^*)^\frac{1}{2}}\left(\frac{1}{N_1^*} + \frac{1}{N}\right)\prod_{j=1}^6\|w_{N_j^*}\|_{Y^0}\\
        \lesssim \sum_{N_1^*\sim N_2^*\gg N_3^*\gtrsim\cdots\gtrsim N_6^*}N^{-2+}\left(\frac{1}{N_1^*} + \frac{1}{N}\right)\prod_{j=1}^6\|w_{N_j^*}\|_{Y^0}\lesssim N^{-3+}\|Iu\|_{Y^1_T}^6,
    \end{multline*}
    as desired. This also provides the blueprint for the remaining estimates, in that we will seek to always apply two trilinear estimates, unless we have enough high frequency terms to compensate.\\

    \textbf{Case $A_3$: } We further split $A_3$ into two regions, given as
    \begin{align*}
        A_{31} &= \{k\in A_3\,:\,N_1^*\gg N_3^*\}\\
        A_{32} &= \{k\in A_3\,:\, N_1^*\sim N_3^*\}.
    \end{align*}
    \textbf{Case $A_{31}$:} In this case we observe that
    \[
    |k_1^*\pm k_3^*|,\,|k_2^*\pm k_4^*|\gtrsim N_1^*,
    \]
    and that we may perform the almost orthogonal decomposition of $k_j\in I_j$ for $j = 1,2$ with $|I_j|\sim N_3^*$ so that we have the estimate $\overline{\mathcal{M}}_6\lesssim m(N_1^*)N_1^*m(N_3^*)N_3^*$ by equation \eqref{equation: M bound 1}. This will then follow from the observation that we may perform two applications of the trilinear Strichartz estimate of Corollary~\ref{Corollary: Trilinear Strichartz Specific}, which yields
    \begin{multline*}
    \|w_{N_5^*}w_{N_3^*}P_{I_1}w_{N_1^*}\|_{L^2_{x,t}}\|w_{N_6^*}w_{N_4^*}P_{I_2}w_{N_2^*}\|_{L^2_{x,t}}\\
    \lesssim (N_1^*)^{0+}\left(\frac{N_5^*}{N_1^*}+\frac{1}{N}\right)\prod_{j=1}^2 \|P_{I_j}w_{N_j^*}\|_{Y^0}\prod_{j=3}^6\|w_{N_j^*}\|_{Y^0},
    \end{multline*}
    and hence
    \begin{multline*}
        \eqref{Equation: Resonant Base Estimate}\lesssim \sum_{\substack{N_1^*\sim N_2^*\gg N_3^*>\dots> N_6^*\\I_1\sim I_2\\|I_1|,\,|I_2|\sim N_3^*}}\frac{N^{-2+2s}(N_{2}^*N_4^*)^{-s}}{N_5^*N_6^*}\left|\int_{\lambda\mathbb{T}}\int_0^T\prod_{j=1}^2P_{I_1}w_{N_j^*}\prod_{j=3}^6w_{N_j^*}\,dtd\Gamma_6\right|\\
        \lesssim \sum_{\substack{N_1^*\sim N_2^*\gg N_3^*>\dots> N_6^*\\I_1\sim I_2\\|I_1|,\,|I_2|\sim N_3^*}}\frac{N^{-2+2s}(N_{2}^*N_4^*)^{-s+}}{N_5^*N_6^*}\|w_{N_5^*}w_{N_3^*}P_{I_1}w_{N_1^*}\|_{L^2_{x,t}}\|w_{N_6^*}w_{N_4^*}P_{I_2}w_{N_2^*}\|_{L^2_{x,t}}\\
        \lesssim\sum_{N_1^*\sim N_2^*\gg N_3^*>\dots> N_6^*}\frac{N^{-2+2s}(N_{2}^*N_4^*)^{-s+}}{(N_5^*N_6^*)^{\frac12}}\left(\frac{1}{N_1^*} + \frac{1}{N}\right)\prod_{j=1}^6 \|Iu_{N_j^*}\|_{Y^1}\\
        \lesssim N^{-3+}\|Iu\|_{Y^1_T}^6.
    \end{multline*}
    
    \textbf{Case $A_{32}$:} Turning our attention to $A_{32}$ we have that $N_3^*\sim N_4^*$ and hence $N_1^*\sim N_4^*$, which makes the application of two trilinear estimates considerably more complicated.

    To overcome this, we decompose $A_{32}$ into
    \begin{align*}
        A_{321} &= \{k\in A_{32}\,:\,\{N_1^*,N_2^*,N_3^*,N_4^*\} = \{N_1,N_2,N_3,N_4\}\}\\
        A_{322} &= \{k\in A_{32}\,:\,\{N_1^*,N_2^*,N_3^*,N_4^*\} = \{N_1,N_2,N_3,N_5\}\}\\
        A_{321} &= \{k\in A_{32}\,:\,\{N_1^*,N_2^*,N_3^*,N_4^*\} = \{N_1,N_2,N_4,N_6\}\}.
    \end{align*}

    \textbf{Subcase $A_{321}$: } To utilize the trilinear Strichartz estimate here, we need to perform our (final) decomposition into the sets:
    \begin{align*}
        A_{3211} &= \{k\in A_{321}\,:\,k_1 >0,\, k_2 > 0,\, k_3 < 0,\, k_4 < 0\}\\
        A_{3212} &= \{k\in A_{321}\,:\,k_1 >0,\, k_2 < 0,\, k_3 < 0,\, k_4 > 0\}\\
        A_{3213} &= \{k\in A_{321}\,:\,k_1 >0,\, k_2 < 0,\, k_3 < 0,\, k_4 < 0\}\\
        A_{3214} &= \{k\in A_{321}\,:\,k_1 >0,\, k_2 < 0,\, k_3 > 0,\, k_4 < 0\},
    \end{align*}
    where we have used symmetry to assume that $k_1 > 0$. Note that we've possibly lost the original order of $N_1\geq N_3$ and $N_2\geq N_4$ in reducing the total number of possibilities to the above four, but the proof will be insensitive to this.

    We first observe that in either $A_{3211}$ and $A_{3212}$ we can perform two trilinear Strichartz estimates on the terms $u_1u_3u_5$ and $u_2u_4u_6$. Utilizing the bound from Equation \eqref{equation: M bound 1}, we have the size and regularity estimate
    \[
    \overline{\mathcal{M}}(I_1,\dots, I_6)\lesssim m(N_1^*)N_1^*m(N_3^*)N_3^*.
    \]
    Observing that
    \[
    |k_1-k_3|\sim |k_2-k_4|\sim |k_5-k_1|\sim |k_6-k_2|\sim N_1^*,
    \]
    we see that we may apply Corollary \ref{Corollary: Trilinear Strichartz Specific} twice, obtaining
    \begin{multline*}
        \eqref{Equation: Resonant Base Estimate}\lesssim \sum_{N_1\sim N_4\gtrsim N\gg N_5^*\geq N_6^*}\frac{N^{2s-2}(N_1)^{-2s}}{N_5N_6}\left|\int_{\lambda\mathbb{T}\times [0,T]}\prod_{j=1}^6w_{N_j^*}\,dxdt\right|\\
        \lesssim \sum_{N_1^*\sim N_4^*\gtrsim N\gg N_5^*}\frac{N^{2s-2}(N_1^*)^{-2s}}{N_5N_6}\|w_{N_1}w_{N_3}w_{N_5}\|_{L^2_{x,t}}\|w_{N_2}w_{N_4}w_{N_6}\|_{L^2_{x,t}}\\
        \lesssim \sum_{N_1\sim N_4\gtrsim N\gg N_5^*\geq N_6^*}\frac{N^{2s-3}(N_1^*)^{-2s+}}{(N_5N_6)^{\frac12}}\prod_{j=1}^6\|Iu_{N_j^*}\|_{Y^1}\lesssim N^{-3+}\|Iu\|_{Y^1_T}^6.
    \end{multline*}
    Suppose now that we are in case $A_{3213}$, in which case we have
    \[
    |k_2+k_3| = |k_1+k_4 + O(N_5^*)|\gtrsim N_1^*\gtrsim N,
    \]
    so that $|k_1+k_4|\gtrsim N_1^*$. In particular, we have access to two trilinear Strichartz applications and can apply the above argument again after a permutation of the functions.

    We are thus left with $A_{3214}$, where we distinguish two (further) cases:
    \begin{align*}
        A_{3214a} &:= \{k\in A_{3214}\,:\,|k_1+k_2|\lesssim |k_5^*|\}\\
        A_{3214b} &:= \{k\in A_{3214}\,:\,|k_1+k_2|\gg |k_5^*|\gtrsim |k_5+k_6|\}.
    \end{align*}
    In case $A_{3214a}$ we have that the linear relationship between $k_j$ forces $|k_3+k_4|\lesssim |k_5^*|$, and hence we may perform the further almost orthogonal decomposition of $k_j\in I_j$ for $j=1,2,3,4$ and $|I_j|\sim N_5^*$. Equation \eqref{Equation: M bound 3} yields the bound $\overline{\mathcal{M}}_6\lesssim m(N_1^*)N_1^*m(N_5^*)N_5^*$ in this scenario, and hence
    \begin{multline*}
        \eqref{Equation: Resonant Base Estimate}\lesssim \sum_{N_1\sim N_4\gtrsim N\gg N_5^*}\sum_{\substack{I_1\sim I_2\\ I_3\sim I_4\\|I_j|\sim N_5^*}}\frac{N^{3s-3}(N_1^*)^{-3s}}{N_6^*}\left|\int_{\lambda\mathbb{T}\times [0,T]}\prod_{j=1}^4P_{I_j}w_{N_j}\prod_{j=5}^6w_{N_j}\,dxdt\right|\\
        \lesssim \sum_{N_1\sim N_4\gtrsim N\gg N_5^*}\sum_{\substack{I_1\sim I_2\\ I_3\sim I_4\\|I_j|\sim N_5^*}}\frac{N^{3s-3}(N_1^*)^{-3s}}{N_6^*}\prod_{j=1}^4\|P_{I_j}w_{N_j}\|_{L^6}\prod_{j=5}^6\|w_{N_j}\|_{L^6}\\
        \lesssim N^{-3+}\|Iu\|_{Y^1_T}^6.
    \end{multline*}

    This leaves us with the final case of $A_{3214b}$, where $|k_1+k_2|\gg N_5^*\gtrsim |k_5+k_6|$. This forces $|k_3+k_4|\sim |k_1+k_2|$, so we perform a dyadic decomposition of $|k_1+k_2|\sim N_{12}$, and the further almost orthogonal decomposition of $k_j\in I_j$ for $|I_j|\sim N_{12}$ and $j = 1,2,3,4$. We note the bound $\overline{\mathcal{M}}_6\lesssim m(N_1^*)N_1^*m(N_{12})N_{12}$ of Equation \eqref{Equation: M bound 4}, which reduces us to bounding
    \begin{multline*}
        \eqref{Equation: Resonant Base Estimate}\lesssim \sum_{N_1^*\sim N_4^*\gtrsim N\gg N_5^*}\sum_{N_{12}\lesssim N_1^*}\sum_{\substack{I_1\sim I_2\\I_3\sim I_4\\|I_j|\sim N_{12}}}\frac{N^{3s-3}(N_1^*)^{-3s}m(N_{12})N_{12}}{N_5N_6}\\
        \times\left|\int_{\lambda\mathbb{T}\times [0,T]}\prod_{j=1}^4P_{I_j}w_{N_j}\prod_{j=5}^6w_{N_j^*}\,dxdt\right|.
    \end{multline*}
    In order to handle the $6$-linear estimate, we consider first the case that
    \[
    N_{12}^2\lesssim N_5^*N_1,
    \]
     and apply the base trilinear estimate of Proposition \ref{Lemma: trilinear Estimates} twice. This affords us
    \[
    \left|\int_{\lambda\mathbb{T}\times [0,T]}\prod_{j=1}^4P_{I_j}w_{N_j}\prod_{j=5}^6w_{N_j^*}\,dxdt\right|\lesssim (N_1^*)^{0+}\left(\frac{N_{12}}{N_1}+\frac{1}{N}\right)\prod_{j=1}^4\|P_{I_j}w_{N_j}\|_{Y^0}\prod_{j=5}^6\|w_{N_j}\|_{Y^0},
    \]
    where we may further sum to obtain
    \[
    \sum_{N_{12}^2\lesssim N_5^*N_1}(N_1^*)^{0+}m(N_{12})N_{12}\left(\frac{N_{12}}{N_1}+\frac{1}{N}\right)\lesssim (N_1^*)^{0+}N_5^* + N^{-s}N_1^{s+},
    \]
    by Remark \ref{Remark: N12 M term} and the summation restriction. If, on the other hand, we have
    \[
    N_5^*N_1\ll N_{12}^2,
    \]
    then we may apply the enhanced version of Proposition \ref{Lemma: trilinear Estimates} where
    \[
    K\lesssim \frac{N_5^*N_1}{N_{12}},
    \]
    and hence
    \[
    \left|\int_{\lambda\mathbb{T}\times [0,T]}\prod_{j=1}^4P_{I_j}w_{N_j}\prod_{j=5}^6w_{N_j^*}\,dxdt\right|\lesssim (N_1^*)^{0+}\left(\frac{N_5^*}{N_{12}}+\frac{1}{N}\right)\prod_{j=1}^4\|P_{I_j}w_{N_j}\|_{Y^0}\prod_{j=5}^6\|w_{N_j}\|_{Y^0}.
    \]
    Summation again yields the bound
    \[
    \sum_{N_5^*N_1\ll N_{12}^2\lesssim N_1^2}(N_1^*)^{0+}m(N_{12})N_{12}\left(\frac{N_{5}^*}{N_{12}}+\frac{1}{N}\right)\lesssim (N_1^*)^{0+}N_5^* + N^{-s}N_1^{s+}.
    \]
    It follows that in either situation we have
    \begin{multline*}
        \sum_{N_1^*\sim N_4^*\gtrsim N\gg N_5^*}\sum_{N_{12}\lesssim N_1^*}\sum_{\substack{I_1\sim I_2\\I_3\sim I_4\\|I_j|\sim N_{12}}}\frac{m(N_{12})N_{12}}{N^{3-3s}N_1^{3s-}N_5N_6}
        \left|\int_{\lambda\mathbb{T}\times [0,T]}\prod_{j=1}^4P_{I_j}w_{N_j}\prod_{j=5}^6w_{N_j^*}\,dxdt\right|\\
        \lesssim \sum_{N_1^*\sim N_4^*\gtrsim N\gg N_5^*}N^{-3+}N_1^{0-}\prod_{j=1}^6 \|w_{N_j}\|_{Y^0}\lesssim N^{-3+}\|Iu\|_{Y^1}^6,
    \end{multline*}
    which is our desired bound.

    \textbf{Subcases $A_{322}$ \& $A_{323}$: }We assume by symmetry of the conditions of resonance that we are in case $A_{322}$. We note that we must have that $k_1,k_3,k_5$ are not of the same sign. Moreover, if $k_i$ and $k_2$ are of the same sign then $|k_i-k_2|\sim N_1^*$, and $|k_i+k_2|\sim N_{1}^*$ if they have opposite signs. 
    
    We can assume without loss of generality that $k_1 > 0$. We then have the following cases:
    \begin{enumerate}[label = (\roman*)]
        \item $k_3 > 0,\,k_5 < 0$ or $k_3 < 0,\,k_5 > 0$
        \item $k_3 < 0$ \& $k_5 < 0$.
    \end{enumerate}
    We first note that regardless of the sign of $k_2$, we always have $|k_j+k_2|\gtrsim N_1^*$ for $j\in\{1,3,5\}$. Indeed, if they both have the same sign then it's immediate, while if they differ in signs the resonance condition will force $|k_j+k_2|\gtrsim N_1^*$. Now, if we fall in case $(i)$ then we have
    \[
    |k_1+k_2|,\,|k_3-k_5|\gtrsim N_1^*,
    \]
    so that we may apply two trilinear estimates to $w_{N_1}w_{N_2}w_{N_4}$ and $w_{N_3}w_{N_5}w_{N_6}$. If we fall in case $(ii)$ then
    \[
    |k_1-k_3|,\,|k_2+k_5|\gtrsim N_1^*,
    \]
    and we may apply two trilinear estimates to $w_{N_1}w_{N_3}w_{N_4}$ and $w_{N_2}w_{N_5}w_{N_6}$.

    Thus, in either case we may apply the bound $\overline{\mathcal{M}}_6\lesssim m(N_1^*)N_1^*m(N_3^*)N_3^*$ together with the two applications of Corollary \ref{Corollary: Trilinear Strichartz Specific} to obtain the bound
    \[
\eqref{Equation: Resonant Base Estimate}\lesssim N^{-3+}\|Iu\|_{Y^1_T}^6,
    \]
    as in the proof of $A_{3211}$.\\
    \textbf{Case $A_4$:} This is the final case, in which all frequencies are large. Here we just assume that $N_1\sim N_2\geq N_3\geq \dots\geq N_6$ and take the $\overline{\mathcal{M}}_6$ bound afforded by \eqref{equation: M bound 1}:
    \[
    \overline{\mathcal{M}}\lesssim m(N_1)N_1m(N_3)N_3.
    \]
    In this manner, we find via a straightforward $L^6$ estimate:
    \[
        \eqref{Equation: Resonant Base Estimate}\lesssim \sum_{N_1\sim N_2\geq N_3\dots N_6}N^{-3+3s}(N_2N_4N_5)^{-s}\prod_{j=1}^6\|w_{N_j}\|_{L^6_{x,t}}\lesssim N^{-3+}\|Iu\|_{Y^1_T}^6,
    \]
    which was our desired estimate.
\end{proof}

\section*{Acknowledgements}
The author would like to graciously thank the anonymous referee for their close reading and attentive comments, which helped to clarify a central argument and strengthen the presentation of the manuscript.

\end{document}